\DeclareMathOperator{\SH}{SH}
\DeclareMathOperator*\limF{{\lim}^F}
\begin{document}

\title{Star sorts, Lelek fans, and the reconstruction of non-$\aleph_0$-categorical theories in continuous logic}

\author{Itaï \textsc{Ben Yaacov}}

\address{Itaï \textsc{Ben Yaacov} \\
  Université Claude Bernard -- Lyon 1 \\
  Institut Camille Jordan, CNRS UMR 5208 \\
  43 boulevard du 11 novembre 1918 \\
  69622 Villeurbanne Cedex \\
  France}

\urladdr{\url{http://math.univ-lyon1.fr/~begnac/}}

\thanks{Author supported by ANR project AGRUME (ANR-17-CE40-0026).}

\keywords{continuous logic, theory, interpretation, bi-interpretation, sort, groupoid, reconstruction}
\subjclass[2020]{03C15, 03C95, 03C30}

\begin{abstract}
  We prove a reconstruction theorem valid for arbitrary theories in continuous (or classical) logic in a countable language, that is to say that we provide a complete bi-interpretation invariant for such theories, taking the form of an open Polish topological groupoid.

  More explicitly, for every such theory $T$ we construct a groupoid $\bG^*(T)$ that only depends on the bi-interpretation class of $T$, and conversely, we reconstruct from $\bG^*(T)$ a theory that is bi-interpretable with $T$.
  The basis of $\bG^*(T)$ (namely, the set of objects, when viewed as a category) is always homeomorphic to the Lelek fan.

  We break the construction of the invariant into two steps.
  In the second step we construct a groupoid from any sort of codes for models, while in the first step such a sort is constructed.
  This allows us to place our result in a common framework with previously established ones, which only differ by their different choice of sort of codes.
\end{abstract}

\maketitle

\tableofcontents

\section*{Introduction}

This paper deals with what we have come to refer to as \emph{reconstruction theorems}.
By this we mean a procedure that associates to a theory $T$ (possibly under some hypotheses) a topological group-like object that is a complete bi-interpretation invariant for $T$.
In other words, if $T'$ is bi-interpretable with $T$, then we associate to it the same object (up to an appropriate notion of isomorphism), and conversely, the isomorphism class of this object determines the bi-interpretation class of $T$.

The best-known result of this kind is due to Coquand, and appears in Albrandt \& Ziegler \cite{Ahlbrandt-Ziegler:QuasiFinitelyAxiomatisable}.
It states that if $T$ is an $\aleph_0$-categorical theory (in a countable language), then the topological group $G(T) = \Aut(M)$, where $M$ is the unique countable model, is such an invariant.
This was originally proved for theories in classical (Boolean-valued) logic, and subsequently extended by Kaïchouh and the author \cite{BenYaacov-Kaichouh:Reconstruction} to continuous (real-valued) logic.

In \cite{BenYaacov:ReconstructionGroupoid} we proposed a reconstruction result that also covers some non-$\aleph_0$-categorical theories, using a topological groupoid (rather than a group) as invariant.
The result was presented in two times, first for classical logic and then for the more general continuous logic.
This was not done for the sake of presentation (do the more familiar case first), but because of a fundamental difference between the two cases.
In classical logic, we have a straightforward construction of a sort of ``codes of models'' (more about this later).
In continuous logic, on the other hand, no such construction exists in general, and we were reduced to assuming that such a sort (satisfying appropriate axioms) existed, and was given to us.
Worse still, we gave an example of a theory for which no such sort existed, and consequently, for which our reconstruction theorem was inapplicable.

In the present paper we seek to remedy this deficiency, proposing a reconstruction theorem that holds for all theories (in a countable language).
This time, we work exclusively in continuous logic, keeping in mind that this contains classical logic as a special case.

In \autoref{sec:Sorts} we provide a few reminders regarding continuous logic in general, and interpretable sorts in particular.
We (re-)define the notions of interpretation and bi-interpretation, in a manner that is particularly appropriate for the use we shall make of them, and that avoids the rather tedious notions of interpretation schemes.

In \autoref{sec:Coding} we discuss various ways in which one sort $E$ can be ``coded'' in another sort $D$, both uniform (e.g., $E$ is interpretable in $D$) and non-uniform (e.g., each $a \in E$ is in the definable closure of some $b \in D$).
We define a \emph{coding sort} $D$ as a sort which codes models.
Every sort is coded in a coding sort in a non-uniform fashion, and therefore in a uniform fashion as well.

In \autoref{sec:Reconstruction} we associate to a coding sort $D$ a topological groupoid $\bG_D(T)$, from which a theory $T_{2D}$, bi-interpretable with $T$, can be recovered.
In particular, $\bG_D(T)$ determines the bi-interpretation class of $T$.
If, in addition, $D$ only depends on the bi-interpretation class of $T$, then so does $\bG_D(T)$, in which case it is a complete bi-interpretation invariant.
We point out, rather briefly, how previous reconstruction theorems fit in this general setting.

In \autoref{sec:StarSpace} and \autoref{sec:StarSort} we define \emph{star spaces} and \emph{star sorts}.
These, by their very nature, require us to work in continuous (rather than classical) logic.
In particular, we define a notion of a \emph{universal star sort}, and show that if it exists, then it is unique up to definable bijection, and only depends on the bi-interpretation class of $T$.

In \autoref{sec:Witnesses} we use the star sort formalism to give a construction that is analogous to, though not a direct generalisation of, the construction of the coding sort for classical theories in \cite{BenYaacov:ReconstructionGroupoid}.
We then prove that the resulting sort is a universal star sort, so one always exists.
Moreover, the construction is independent of the theory: we simply construct, for any countable language $\cL$, a star sort $D^*$ that is universal in any $\cL$-theory, complete or incomplete.

We conclude in \autoref{sec:UniversalStarSort}, showing that the universal star sort must be a coding sort, whence our most general reconstruction theorem: in a countable language, the groupoid $\bG_{D^*}(T)$ is a complete bi-interpretation invariant for $T$.
We also show that the type-space of the sort $D^*$, relative to any complete theory $T$, is the Lelek fan $L$.
Finally, in case $T$ does fall into one of the cases covered by previous results, we show that our last result can be viewed as some kind of generalisation.
More precisely, using the Lelek fan, we can recover the coding sort $D^*$, and therefore the corresponding groupoid $\bG_{D^*}(T)$, from those given by the earlier results.

\section{Sorts and interpretations}
\label{sec:Sorts}

As said in the introduction, we are going to work exclusively in continuous first order logic, and assume that the reader is familiar with it.
For a general exposition, see \cite{BenYaacov-Usvyatsov:CFO,BenYaacov-Berenstein-Henson-Usvyatsov:NewtonMS}.
We allow formulas to take truth values in arbitrary compact subsets of $\bR$, so connectives are arbitrary continuous functions from $\bR^n$ to $\bR$.
For a countable family of connectives, it will suffice to take all rational constants, addition and multiplication, to which we add the absolute value operation.
Closing these under composition yields a (countable) family of functions that is dense among all continuous functions on each compact subset of $\bR^n$.

\begin{ntn}
  \label{ntn:MaxMinDotMinus}
  Using the absolute value operation we may define maximum and minimum directly (i.e., without passing to a limit).
  We shall use infix notation $\vee$ and $\wedge$ for those.
  We shall also write $t \dotminus s$ for the \emph{truncated subtraction} $(t-s) \vee 0$.
\end{ntn}

We allow the language to be many-sorted.
Some of the time we also require the language to be countable, which means in particular that the set of sorts is countable, although this will not be a requirement for the present section.

We are going to talk quite a bit about sorts and interpretations, so let us begin with a few reminders.
By a \emph{sort} we mean an interpretable sort in the sense of continuous logic, as discussed, for example, in \cite{BenYaacov-Kaichouh:Reconstruction,BenYaacov:ReconstructionGroupoid}.
Sorts are obtained by closing the family of basic sorts (namely, sorts named in the language) by
\begin{itemize}
\item adding the constant sort $\{0,1\}$ (so it is always implicitly interpretable),
\item countable product,
\item quotient by a definable pseudo-distance (in a model that is not saturated, this may also require a passage to the completion), and
\item non-empty definable subset.
\end{itemize}

We follow the convention that natural numbers are coded by sets $n = \{0,\ldots,n-1\} \in \bN$, so $\{0,1\}$ may sometimes be denoted by $2$ (this is especially true of its powers: the Cantor space is $2^\bN$).

Throughout, by \emph{definable} we mean definable by a formula, without parameters (unless parameters are given explicitly).
Any function $\{0,1\} \rightarrow \bR$ is a formula on the sort $\{0,1\}$.
Formulas on a finite product of sorts are constructed in the usual way, using function and predicate symbols, connectives and quantifiers, and closing the lot under uniform limits.
In particular, if $\varphi_i(x)$ are formulas on a sort $D$ for $i < 2^n$, then $\varphi(i,x) = \varphi_i(x)$ is a formula on $2^n \times D$.
Formulas on an infinite product of sorts consist of all formulas on finite sub-products (extended to the whole product through the addition of dummy variables), as well as all uniform limits of such (where the sub-products through which they factor may vary).
If $\overline{d}$ is a definable pseudo-distance on a sort $D$ (defined by a formula on $D \times D$), then formulas on the quotient $(D,\overline{d})$ are formulas on $D$ that are uniformly continuous with respect to $\overline{d}$.
Similarly, for formulas on a product of several quotient sorts.

Finally, we recall that a definable subset of a sort $D$ is a subset $E \subseteq D$, the distance to which is definable (this is significantly more involved than the notion of a definable subset in classical logic).
Equivalently, if for every formula $\varphi(x,y)$, where $x$ is a variable in $D$ and $y$ is a tuple of variables in arbitrary sorts, the predicate $\sup_{x \in E} \varphi(x,y)$ is definable by a formula $\psi(y)$.
Formulas on a product of definable subsets of sorts are restrictions of formulas on the corresponding product of ambient sorts.

Notice that every compact metric space is a quotient space of $2^\bN$ by a continuous pseudo-distance, and therefore a sort, on which the formulas are the continuous functions.
Conversely, we could have chosen any non-trivial compact metric space as a basic constant sort in place of $\{0,1\}$ (the other obvious candidate being $[0,1]$), and realise $\{0,1\}$ as any two-point set therein.

\begin{rmk}
  \label{rmk:PseudoDistanceFromFormula}
  An obvious, yet crucial remark, is that if $\varphi(x,y)$ is an arbitrary formula on $E \times D$, then
  \begin{gather*}
    d_\varphi(y,y') = \sup_{x \in E} \, |\varphi(x,y) - \varphi(x,y')|
  \end{gather*}
  defines a pseudo-distance on $D$.
  In addition, if $D = E$, and $\varphi$ happens to define a pseudo-distance on $D$, then it agrees with $d_\varphi$.

  This has numerous useful consequences, let us state two of them explicitly.
  First of all, one may be bothered by the fact that a formula $\varphi(x,y)$ defining a pseudo-distance on a sort $D$ may depend on the structure(s) under consideration.
  However, we may restrict the ``quotient by a pseudo-distance'' step to pseudo-distances of the form $d_\varphi$, that always define pseudo-distances, without any loss of generality.

  A second consequence is that if $E \subseteq D$ are two sorts, then every definable pseudo-distance $d$ on $E$ extends to one on $D$.
  Indeed, extend it first in an arbitrary fashion to a formula $\varphi(x,y)$ on $E \times D$.
  Then $d_\varphi$ is a pseudo-distance on $D$, and it agrees with $d$ on $E$.
\end{rmk}

\begin{rmk}
  \label{rmk:DependenceOnTheory}
  A formula $\psi(x)$ defining the distance to a subset is another property that depends on the structure under consideration, or on its theory.
  However, we do not know a general construction of definable sets from arbitrary formulas, analogous to that of \autoref{rmk:PseudoDistanceFromFormula}, and have good reason to believe that none such exists.

  In other words, as far as we know, the set of interpretable sorts depends in a non-trivial way on the theory.
  This makes it all the more noteworthy that our construction of the universal star sort as $D^*_\Phi$ can be carried out in a manner that depends only on the language, and not on the theory.
\end{rmk}

A \emph{definable map} between two sorts $\sigma\colon D \rightarrow E$ is one whose graph is the zero-set of some formula.
Composing a formula with a definable map yields another formula.
A special case of such a composition is the formula $d\bigl( \sigma(x), y\bigr)$, on the product $D \times E$, whose zero-set is indeed the graph of $\sigma$.
Every formula is uniformly continuous in its arguments, and $d\bigl(\sigma(x), y \bigr)$ is no exception.
It follows that every definable map $\sigma\colon D \rightarrow E$ is uniformly continuous.

Two sorts that admit a definable bijection are, for most intents and purposes (in particular, for those of the present paper) one and the same.
Moreover, every sort is in definable bijection with one obtained from the basic sorts by applying each of the operations once, in the given order, so we may pretend that every sort is indeed of this form.
Similarly, we may say that a sort $D$ (which may be a basic sort, or one that has already been obtained through some interpretation procedure) is \emph{interpretable} in a family of sorts $(E_i)$ if we can construct from this family $(E_i)$ a sort $D'$ that admits a definable bijection with $D$.

Consider two languages $\cL \subseteq \cL'$, where $\cL'$ is allowed to add not only symbols, but also sorts.
If $M'$ is an $\cL'$-structure, and $M$ is the $\cL$-structure obtained by dropping the sorts and symbols not present in $\cL$, then $M$ is \emph{the $\cL$-reduct} of $M'$ and $M'$ is \emph{an $\cL'$-expansion} of $M$.
If $T'$ is an $\cL'$-theory and $T$ is the collection of $\cL$-sentences in $T'$, then $T$ is also the theory of all $\cL$-reducts of models of $T'$ (notice, however, that an arbitrary model of $T$ need only admit an elementary extension that is a reduct of a model of $T'$).
In this situation we say that $T$ is \emph{the $\cL$-reduct} of $T'$ and that $T'$ is \emph{an $\cL'$-expansion} of $T$.

One special case of an expansion is a \emph{definitional expansion}, in which $\cL$ and $\cL'$ have the same sorts, and each new symbol of $\cL'$ admits an $\cL$-definition in $T'$.
In this case, $T'$ is entirely determined by $T$ together with these definitions.
A more general case is that of an \emph{interpretational expansion} of $T$, where $T'$ identifies each new sort of $\cL'$ with an interpretable sort of $T$, and gives $\cL$-definitions to all new symbols in $\cL'$ (for this to work we also require $\cL'$ to contain, in particular, those new symbols that allow $T'$ to identify the new sorts with the corresponding interpretable ones).
Again, $T$, together with the list of interpretations of the new sorts and definitions of the new symbols, determine $T'$.
Moreover, unlike the general situation described in the previous paragraph, here every model of $T$ expands to a model of $T'$.

\begin{dfn}
  \label{dfn:Interpretation}
  Let $T$ and $T'$ be two theories, say in disjoint languages.
  We say that $T'$ is \emph{interpretable} in $T$ if $T'$ is a reduct of an interpretational expansion of $T$.
  The two theories are \emph{bi-interpretable} if they admit a common interpretational expansion (which is stronger than just each being interpretable in the other).
\end{dfn}

A theory has the same sorts (up to a natural identification) as an interpretational expansions.
Therefore, somewhat informally, we may say that two theories are bi-interpretable if and only if they have the same sorts.

Let us consider a few more possible constructions of sorts that will become useful at later stages, and show that they can be reduced to the basic construction steps that we allow.

\begin{lem}
  \label{lem:SortInverseLimit}
  Let
  \begin{gather*}
    D_0 \stackrel{\pi_0}\twoheadleftarrow D_1 \stackrel{\pi_1}\twoheadleftarrow \cdots
  \end{gather*}
  be an inverse system of sorts with surjective definable maps $\pi_n\colon D_{n+1} \twoheadrightarrow D_n$.
  Then the inverse limit $D = \varprojlim D_n \subseteq \prod D_n$ is again a sort, which we may equip with the distance
  \begin{gather}
    \label{eq:SortInverseLimitDistance}
    d(x,y) = \sum_n \, \Bigl(2^{-n} \wedge d(x_n, y_n)\Bigr)
  \end{gather}
  (or with the restriction of any other definable distance on $\prod D_n$).
\end{lem}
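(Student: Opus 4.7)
The plan is to realise $D$ as a non-empty definable subset of the countable product $\prod_n D_n$, which is itself already a sort by the product closure step. The weighted distance from~\eqref{eq:SortInverseLimitDistance} is the uniform limit of the formulas $(y, y') \mapsto \sum_{n=0}^N 2^{-n} \wedge d(y_n, y'_n)$, so it is itself a formula and defines a pseudo-distance on $\prod_n D_n$. By the definition of a definable subset reviewed in the introductory section, showing that $D$ is definable amounts to showing that $y \mapsto d(y, D)$ is a definable predicate on $\prod_n D_n$.

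For $n \leq m$ write $\pi^m_n\colon D_m \twoheadrightarrow D_n$ for the composition $\pi_n \circ \pi_{n+1} \circ \cdots \circ \pi_{m-1}$ (with $\pi^m_m$ the identity); as compositions of definable surjections these are again definable and surjective. Consider
\begin{gather*}
  \psi_m(y) = \inf_{x \in D_m} \, \sum_{n=0}^m \, 2^{-n} \wedge d\bigl(\pi^m_n(x), y_n\bigr),
\end{gather*}
which depends only on the first $m+1$ coordinates of $y$. The expression inside the infimum is a formula in $(x, y_0, \ldots, y_m)$, so $\psi_m$ is a definable predicate on the product.

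We claim that $\psi_m(y) \leq d(y, D) \leq \psi_m(y) + 2^{-m}$ for all $y$. The lower bound is immediate: any $(x_n) \in D$ satisfies $x_n = \pi^m_n(x_m)$ for $n \leq m$, so $\sum_{n \leq m} 2^{-n} \wedge d(x_n, y_n)$ already majorises $\psi_m(y)$. For the upper bound, given $\varepsilon > 0$ pick an $\varepsilon$-minimiser $x \in D_m$ for $\psi_m(y)$, set $x_n = \pi^m_n(x)$ for $n \leq m$, and then lift recursively using the surjectivity of each $\pi_k$ for $k \geq m$ to obtain a compatible sequence $(x_n) \in D$; the remaining tail contributes at most $\sum_{n > m} 2^{-n} = 2^{-m}$.

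It follows that $d(\cdot, D) = \lim_m \psi_m$ with uniform convergence, so $d(\cdot, D)$ is a uniform limit of definable predicates and hence itself a definable predicate. This confirms that $D$ is a definable subset of $\prod_n D_n$, non-empty by the same lifting argument applied to any $x_0 \in D_0$, and therefore a sort. The only substantive step is the uniform estimate $d(y, D) \leq \psi_m(y) + 2^{-m}$; this is where the surjectivity hypothesis on the $\pi_n$ is genuinely used, and it is what converts the countable intersection description of $D$ into a single definable-subset condition.
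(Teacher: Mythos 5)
Your proof is correct, and it takes a slightly different route from the paper's. The paper's proof introduces the single formula
\[
\varphi(x) = \sum_n \, \Bigl(2^{-n} \wedge d\bigl( x_n, \pi_n(x_{n+1}) \bigr)\Bigr),
\]
whose zero-set is exactly $D$, and then shows (using surjectivity of the $\pi_n$) that $\varphi(x) < \delta(\varepsilon)$ forces $d(x,D) < \varepsilon$; the definability of $D$ then follows from the standard criterion (cited to Ben Yaacov--Berenstein--Henson--Usvyatsov) that the zero-set of a formula is definable whenever small $\varphi$-value uniformly implies small distance. You instead bypass that criterion and build $d(\cdot, D)$ directly, writing it as the uniform limit of the finite-stage definable predicates $\psi_m$, with the explicit error bound $|d(\cdot, D) - \psi_m| \leq 2^{-m}$. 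Both arguments hinge on the same use of surjectivity (recursive lifting of a partial sequence to a full element of $D$), so the underlying geometric content is identical; what differs is how definability is extracted at the end. Your version is more self-contained and makes the distance predicate explicit, at the cost of carrying the lower and upper bound separately; the paper's is shorter but leans on an external black box. Either is a perfectly acceptable way to prove the lemma.
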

\begin{proof}
  Indeed, $D$ is the zero-set in $\prod D_n$ of the formula
  \begin{gather*}
    \varphi(x) = \sum_n \, \Bigl(2^{-n} \wedge d\bigl( x_n, \pi_n(x_{n+1}) \bigr)\Bigr).
  \end{gather*}
  Let $\varepsilon > 0$, and choose $N \in \bN$ large enough depending on $\varepsilon$, and $\delta > 0$ small enough depending on both.
  Let $a \in \prod D_n$, and assume that $\varphi(a) < \delta$.
  Since the maps are surjective, there exists $b \in D$ such that $b_N = a_N$.
  This determines $b_n$ for all $n \leq N$, and having chosen $\delta$ small enough, we have $d(a_n,b_n)$ as small as desired for all $n \leq N$.
  Having chosen $N$ large enough, this yields $d(a,D) \leq d(a,b) < \varepsilon$.

  In other words, we have found a formula $\varphi(x)$ that vanishes on $D$, such that satisfies $\varphi(x) < \delta = \delta(\varepsilon)$ implies $\varphi(x,D) < \varepsilon$.
  This implies that $D$ is a definable subset (see \cite{BenYaacov-Berenstein-Henson-Usvyatsov:NewtonMS}).
\end{proof}

\begin{prp}
  \label{prp:IncreasingLimit}
  Assume that $(D_n)$ is a sequence of sorts, equipped with isometric definable embeddings $D_n \hookrightarrow D_{n+1}$.
  For convenience, let us pretend these embeddings are the identity map, so $D_0 \subseteq D_1 \subseteq \cdots \subseteq D_n \subseteq \cdots$ is a chain.
  Assume moreover that the sequence is Cauchy in the Hausdorff distance.
  In other words, assume that if $n$ is large enough and $n \leq m$, then
  \begin{gather*}
    d^H(D_n,D_m) = \sup_{x \in D_m} \, \inf_{y \in D_n} \, d(x,y)
  \end{gather*}
  is as small as desired.

  Then the completion $E = \widehat{\bigcup D_k}$ is a sort (with definable isometric embedding $D_n \subseteq E$).
  If $\varphi(x,y)$ is a formula on $E \times F$, for some sort (or product of sorts) $F$, and $\varphi_n$ is its restriction to $D_n \times F$, then $(\varphi_n)$ is an equicontinuous compatible family (by compatible, we mean that each $\varphi_n$ is the restriction of $\varphi_{n+1}$).
  Conversely, every such family arises from a unique formula on $E \times F$.
\end{prp}
\begin{proof}
  Assume first that we have a large ambient sort $E_1$ and compatible isometric embeddings $D_n \subseteq E_1$.
  Since each $D_n$ is a sort, the distance $d(x,D_n) = \inf_{y \in D_n} \, d(x,y)$ is definable in $E_1$.
  By hypothesis, these formulas converge uniformly, and their limit is $d(x,E)$.
  Then $E$ is a definable subset of $E_1$, and therefore a sort.

  In the general case, we are going to construct $E_1$ as a quotient of $E_0 = \prod D_n$, whose members we may view as sequences in $E$.
  We may freely pass to a sub-sequence, and assume that $d^H(D_n,D_{n+1}) < 2^{-n-1}$.
  Say that $a \in E_0$ \emph{converges quickly} if $d(a_n,a_m) \leq 2^{-n} + 2^{-m}$, or equivalently, if $d(a_n,b) \leq 2^{-n}$ where $a_n \rightarrow b$ in $E$.
  By our hypothesis regarding the rate of convergence of $(D_n)$, every $b \in E$ is the limit of a quickly converging sequence.

  Recall the \emph{forced limit} construction from \cite{BenYaacov-Usvyatsov:CFO}.
  Formally, it consists of a continuous function $\limF\colon \bR^\bN \rightarrow \bR$, which is monotone, $1$-Lipschitz in the supremum norm on $\bR^\bN$, and most importantly, if $t_n \rightarrow s$ fast enough, say $|t_n - s| \leq 2^{-n}$, then $\limF (t_n : n \in \bN) = s$.
  We render the expression $\limF (t_n : n \in \bN)$ as $\limF_{n \rightarrow \infty} t_n$, considering it a limit construct.
  Since $\limF$ is continuous, we may apply it to formulas.

  Let us fix $n$, and define on $D_n \times E_0$ a formula
  \begin{gather*}
    \rho_n(x,y) = \limF_{m \rightarrow \infty} \, d(x,y_m).
  \end{gather*}
  If $b \in E_0$ converges quickly to $c \in E$, then $\rho_n(a,b) = d(a, c)$ for every $a \in D_n$.
  When $b \in E_0$ does not converge quickly (or possibly, at all), the value $\rho_n(a,b)$ is well defined, but potentially meaningless.
  If $n \leq k$, then $\rho_n$ is the restriction of $\rho_k$, so we may just denote all of them by $\rho$.

  As in \autoref{rmk:PseudoDistanceFromFormula}, we define pseudo-distances on $E_0$ by
  \begin{gather*}
    d_{\rho_n}(y,y') = \sup_{x \in D_n} \, |\rho(x,y) - \rho(x,y')|.
  \end{gather*}
  The sequence of formulas $(d_{\rho_n})$ is increasing.
  Moreover, if $x,y \in D_n$ and $z \in E_0$, then
  \begin{gather*}
    |\rho(x,z) - \rho(y,z)| \leq \sup_m \, |d(x,z_m) - d(y,z_m)| \leq d(x,y),
  \end{gather*}
  so $d_{\rho_n} \leq d_{\rho_{n+1}} \leq d_{\rho_n} + 2^{-n}$.
  Therefore the sequence $(d_{\rho_n})$ converges uniformly to a formula $d_\rho$ on $E_0 \times E_0$, which must define a pseudo-distance as well.
  Let $E_1 = (E_0,d_\rho)$ be the quotient sort.
  By definition, each $\rho_n(x,y)$ is $1$-Lipschitz in $y$ with respect to $d_\rho$, so it may be viewed as a formula on $D_n \times E_1$.
  It is also $1$-Lipschitz in $x$ with respect to the distance on $D_n$.

  Consider $a \in D_k$ and $b,c \in E_1$, and assume that $b_n \rightarrow a$ quickly (but $c$ may be quite arbitrary).
  We have already observed that $\rho(x,b) = d(x,a)$ for every $x \in D_n$, for every $n$.
  Then, for every $n \geq k$:
  \begin{gather*}
    d_{\rho_n}(b,c)
    = \sup_{x \in D_n} \, |\rho(x,b) - \rho(x,c)|
    = \sup_{x \in D_n} \, |d(x,a) - \rho(x,c)|
    = \rho(a,c),
  \end{gather*}
  so $d_\rho(b,c) = \rho(a,c) = \rho_k(a,c)$.
  If follows that the class of $b$ in $E_1$ only depends on $a$.
  Moreover, the map $\sigma_k\colon D_k \rightarrow E_1$, that sends $a$ to the class of any $b \in E_0$ that converges quickly to $a$, is definable, by $d_\rho\bigl( \sigma_k(x), y\bigr) = \rho_k(x,y)$.

  If $b,b'\in E_0$ both converge quickly to $a,a' \in D_k$, respectively, then the same reasoning as above yields $d_{\rho_n}(b,b') = d(a,a')$ for every $n \geq k$, and therefore $d_\rho(b,b') = d(a,a')$.
  Therefore, $\sigma_k\colon D_k \rightarrow E_1$ is an isometric embedding for each $k$.
  Since the $\rho_k$ are restrictions of one another, these embeddings are compatible, and we have successfully reduced to the special case treated in the beginning of the proof.

  Regarding formulas, the only thing we need to prove is that any compatible equicontinuous family of formulas $\varphi_n(x,y)$ on $D_n \times F$, is the restriction of a formula on $E \times F$.
  Notice that our hypotheses imply that the formula $\varphi_n$ are uniformly bounded, say $|\varphi_n| \leq M$.
  We may now construct an inverse modulus of continuity, namely a continuous function $\Delta^{-1}\colon (0,\infty) \rightarrow (0,\infty)$ such that $|\varphi_n(x,y) - \varphi_n(x',y)| \leq \Delta^{-1} \circ d(x,x')$ (see \cite{BenYaacov-Usvyatsov:CFO}; since the family is equicontinuous, we can do this simultaneously for all $\varphi_n$).
  Define on $E \times F$ formulas
  \begin{gather*}
    \psi_n(x,y) = \inf_{x' \in D_n} \bigl( \varphi_n(x',y) + \Delta^{-1} \circ d(x,x') \bigr).
  \end{gather*}
  Then $\psi_n$ agrees with $\varphi_n$ on $D_n \times F$, and equicontinuity together with the convergence of $(D_n)$ in $d^H$ implies that $(\psi_n)$ converge uniformly to a formula $\psi(x,y)$ on $E \times F$, that must extend each $\varphi_n$, as claimed.
\end{proof}

It was pointed out by James Hanson that our \autoref{prp:IncreasingLimit} already appeared in his Ph.D.\ thesis \cite[Proposition~3.4.8]{Hanson:PhD}.
Similarly, in \cite[Remark~3.5.7]{Hanson:PhD} he asserts (without proof) something that, to the extent that we understand it (terminology and notation being somewhat non-standard), is related to our \autoref{prp:NonUniformCoding}.

\section{Coding sorts in other sorts}
\label{sec:Coding}

If $a$ and $b$ are two elements in sorts $E$ and $D$ in some structure (model of $T$), then $a$ is \emph{definable} from $b$, or lies in the \emph{definable closure} of $b$, in symbols $a \in \dcl(b)$, if $a$ is the unique realisation of $\tp(a/b)$ in that structure, as well as in any elementary extension.
This implies, and indeed, is equivalent to, the predicate $d(x,a)$ being definable with $b$ as parameter, say by a formula $\varphi(x,b)$ (see \cite{BenYaacov:DefinabilityOfGroups}).

Let us consider two sorts $D$ and $E$.
In what sense(s) can $E$ be coded in $D$?
A fairly \emph{uniform} fashion for this to happen is if $E$ is interpretable in $D$, i.e., if it embeds definably in a quotient of $D^\bN$, or, at the very worst, $D^\bN \times 2^\bN$.
This would imply a \emph{non-uniform} version: for every $a \in E$ there exists $b \in D^\bN$ such that that $a \in \dcl(b)$.
In fact, the converse implication holds as well -- this follows fairly easily from \autoref{prp:NonUniformCoding} below, together with the presentation of $\widehat{\bigcup D_n}$ as a subset of a quotient of $\prod D_n$.

In any case, we want to explore a stronger condition of ``non-uniform coding'', by singletons in $D$.

\begin{prp}
  \label{prp:NonUniformCoding}
  Let $E$ and $D$ be sorts of a theory $T$.
  Assume that for every $a \in E$ (in a model of $T$) there exists $b \in D$ (possibly in an elementary extension) such that $a \in \dcl(b)$.
  Then $E$ can be embedded in a limit sort of the form $\widehat{\bigcup D_n}$, as per \autoref{prp:IncreasingLimit}, where each $D_n$ is a quotient of $D \times 2^\bN$.
\end{prp}
\begin{proof}
  Consider a type $p \in \tS_E(T)$, so $p = \tp(a)$ for some $a \in E$ in a model of $T$.
  We may assume that $b \in D$ in the same model is such that $a \in \dcl(b)$, as witnessed by $d(x,a) = \varphi_p(x,b)$.

  Let (with $\varepsilon > 0$)
  \begin{gather*}
    \psi_p(x,y) = \sup_{x'} \, |d(x,x') - \varphi_p(x',y)|,
    \\
    \chi_{p,\varepsilon}(y) = 1 \dotminus \bigl( \inf_x \, \psi_p(x,y)/\varepsilon \dotminus 1 \bigr).
  \end{gather*}
  The formula $\psi_p(x,y)$ measures the extent to which $\varphi_p(x',y)$ fails to give us the distance to $x$.
  The formula $\chi_{p,\varepsilon}(y)$ tells us whether $x' \mapsto \varphi_p(x',y)$ is close to being the distance to \emph{some} $x \in E$: $\chi_{p,\varepsilon}(y) = 1$ if $y$ codes some $x$ quite well (error less than $\varepsilon$), vanishes if $y$ does not code anything well enough (error at least $2\varepsilon$), and in all cases its value lies in $[0,1]$.
  Of course, $\psi_p(a,b) = 0$, so $\inf_y \psi_p(x,y) < \varepsilon$ defines an open neighbourhood of $p$.

  Let us fix $\varepsilon > 0$ and let $p$ vary.
  Then the conditions $\inf_y \psi_p(x,y) < \varepsilon$ define an open covering of $\tS_E(T)$.
  By compactness, there exists a family $(p_i : i < n)$ such that for every $q \in \tS_E(T)$, $\inf_y \psi_{p_i}(q,y) < \varepsilon$ for at least one $i < n$.
  Repeating this, with smaller and smaller $\varepsilon$, we may construct a sequence of types $(p_n)$, as well as $\varepsilon_n \rightarrow 0$ such that for every $n_0$, the open conditions $\inf_y \psi_{p_n}(x,y) < \varepsilon_n$ for $n \geq n_0$ cover $\tS_E(T)$.

  Let $n \in \bN$.
  We may view $n = \{0, \ldots, n-1\}$ as a quotient of $2^\bN$, and similarly for $[0,1]$.
  Therefore, $D \times n \times [0,1]$ is a quotient of $D \times 2^\bN$.
  For $(x,y,k,t) \in E \times D \times n \times [0,1]$, define
  \begin{gather*}
    \rho_n(x,y,k,t) = t \cdot \chi_{p_k,\varepsilon_k}(y) \cdot \varphi_{p_k}(x,y).
  \end{gather*}
  This is indeed a formula, giving rise to a pseudo-distance on $D \times n \times [0,1]$:
  \begin{gather*}
    d_{\rho_n}(y,k,t,y',k',t') = \sup_{x \in E} \, | \rho_n(x,y,k,t) - \rho_n(x,y',k',t')|.
  \end{gather*}
  In fact, we may drop $n$ and just write $\rho$ and $d_\rho$: the only role played by $n$ is being greater than $k$.

  Let $D_n$ be the quotient $\bigl( D \times n \times [0,1], d_\rho \bigr)$ (which is, in turn, a quotient of $D \times 2^\bN$).
  The inclusion $D \times n \times [0,1] \subseteq D \times (n+1) \times [0,1]$ induces an isometric embedding $D_n \hookrightarrow D_{n+1}$.
  Therefore, in order to show that the hypotheses of \autoref{prp:IncreasingLimit} are satisfied, all we need to show is that for $n \leq m$ large enough, every member of $D_m$ is close to some member of $D_n$.

  Let $\varepsilon > 0$ be given.
  Find $n_0$ such that $\varepsilon_n < \varepsilon$ for $n \geq n_0$.
  Then, by compactness, find $n_1 > n_0$ such that $\inf_y \psi_{p_n}(x,y) < \varepsilon_n$ for $n_0 \leq n < n_1$ cover $\tS_E(T)$.
  Assume now that $n_1 \leq m$, and let $[b,k,t]$ be some class in $D_m$.
  If $k < n_1$, then $[b,k,t] \in D_{n_1}$.
  If $\inf_x \psi_{p_k}(x,b) \geq 2\varepsilon_k$, then $\rho_n(x,b,k,t) = 0$ regardless of $x$, so $[b,k,t] = [b,0,0] \in D_{n_1}$.
  We may therefore assume that $n_1 \leq k < m$ and there exists $a \in E$ such that $\psi_{p_k}(a,b) < 2\varepsilon_k$.

  By our hypothesis regarding the covering of $\tS_E(T)$, there exists $n_0 \leq \ell < n_1$ such that $\inf_y \psi_{p_\ell}(a,y) < \varepsilon_\ell$.
  Let $c \in D$ be such that $\psi_{p_\ell}(a,c) < \varepsilon_\ell$, and let $s = t \cdot \chi_{p_k,\varepsilon_k}(b)$.
  Then
  \begin{gather*}
    \inf_x \, \psi_{p_\ell}(x,c) < \varepsilon_\ell,
    \qquad
    \chi_{p_\ell,\varepsilon_\ell}(c) = 1,
    \qquad
    \rho(x,c,\ell,s) = s \cdot \varphi_{p_\ell}(x,c),
  \end{gather*}
  so
  \begin{align*}
    d_\rho(b,k,t,c,\ell,s)
    & = s \cdot \sup_x \, \bigl| \varphi_{p_k}(x,b) - \varphi_{p_\ell}(x,c) \bigr|
    \\
    & \leq \sup_x \, \bigl| \varphi_{p_k}(x,b) - d(x,a) \bigr| + \sup_x \, \bigl| d(x,a) - \varphi_{p_\ell}(x,c) \bigr|
    \\
    & = \psi_{p_k}(a,b) + \psi_{p_\ell}(a,c) < 2\varepsilon_k + \varepsilon_\ell < 3\varepsilon.
  \end{align*}
  Then $[c,\ell,s] \in D_{n_1}$ is close enough to $[b,k,t]$.
  By \autoref{prp:IncreasingLimit}, a limit sort $F = \widehat{\bigcup D_n}$ exists.

  Now let us embed $E \hookrightarrow F$.
  We have already constructed a family $(\rho_n)$ of formulas on $E \times D_n$, let us write them as $\rho_n(x,z)$.
  Each is $1$-Lipschitz in $z$ by definition of the distance on $D_n$, and they are compatible, so they extend to a formula $\rho(x,z)$ on $E \times F$.

  Consider $a \in E$, and let $\varepsilon > 0$.
  As above, there exists $\ell$ such that $\varepsilon_\ell < \varepsilon$, and $c \in D$ such that $\psi_{p_\ell}(a,c) < \varepsilon_\ell$.
  Let $a' = [c,\ell,1] \in D_{\ell+1} \subseteq F$.
  Again, as above, $\chi_{p_\ell,\varepsilon_\ell}(c) = 1$, so $\rho(x,a') = \varphi_{p_\ell}(x,c)$, and
  \begin{gather*}
    \sup_x \, |d(x,a) - \rho(x,a')|
    = \sup_x \, |d(x,a) - \varphi_{p_\ell}(x,c)|
    = \psi_{p_\ell}(a,c) < \varepsilon_\ell < \varepsilon.
  \end{gather*}
  Doing this with $\varepsilon \rightarrow 0$ we obtain a sequence $(a_n)$ in $F$ such that $\rho(x,a_n)$ converges uniformly to $d(x,a)$.
  By definition of the distance on $F$ as $d_\rho$, this sequence is Cauchy, with limit $\tilde{a} \in F$, say, and $\rho(x,\tilde{a}) = d(x,a)$.
  In particular, for $z \in F$,
  \begin{gather*}
    d(z,\tilde{a}) = \sup_x | \rho(x,z) - \rho(x,\tilde{a})| = \sup_x \, |\rho(x,z) - d(x,a)|,
  \end{gather*}
  so $a \mapsto \tilde{a}$ is definable.
  By the same reasoning, if $a,a' \in E$, then
  \begin{gather*}
    d(\tilde{a}, \tilde{a}') = \sup_x | \rho(x,\tilde{a}) - \rho(x,\tilde{a}')| = \sup_x \, |d(x,a) - d(x,a')| = d(a,a'),
  \end{gather*}
  so the embedding is isometric, completing the proof.
\end{proof}

\begin{rmk}
  \label{rmk:NonUniformCoding}
  A closer inspection of the proof can yield a necessary and sufficient condition (but we shall not use this):
  A sort $E$ can be embedded in a limit sort of the form $\widehat{\bigcup D_n}$, where each $D_n$ is a quotient of $D \times 2^\bN$, if and only if, for every $a \in E$ and $\varepsilon > 0$, there exists $b \in D$ and a formula $\varphi(x,b)$ that approximates $d(x,a)$ with error at most $\varepsilon$.
\end{rmk}

In \autoref{prp:NonUniformCoding}, we cannot replace $D \times 2^\bN$ with just $D$ (if $D$ is a singleton, then any increasing union of quotients of $D$ is a singleton, and yet $E = \{0,1\}$ satisfies the hypothesis of \autoref{prp:NonUniformCoding}).
Instead, let us prove that this does not change much, in the sense that formulas on $D \times 2^\bN$ or on just $D$ are almost the same thing.

\begin{lem}
  \label{lem:ConstantSortFormulas}
  Let $D$ and $E$ be sorts, and let $\varphi(x,t,y)$ be a formula on $D \times 2^\bN \times E$.
  Then $\varphi$ can be expressed as a uniform limit of continuous combinations of formulas on $D \times E$ and on $2^\bN$ separately (where we recall that formulas on $2^\bN$ are just continuous functions $2^\bN \rightarrow \bR$).
\end{lem}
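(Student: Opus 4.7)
The plan is to reduce to the case where the formula depends on only finitely many coordinates of $2^\bN$, and then exhibit such a finite-level formula as a polynomial combination of formulas on $D \times E$ and on $2^\bN$ separately.

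First, by the description of formulas on an infinite product recalled earlier in the paper, every formula on $D \times 2^\bN \times E$ is a uniform limit of formulas that factor through $D \times 2^{n} \times E$ for some finite $n$ (these are the ``formulas on finite sub-products''). Since a uniform limit of uniform limits is itself a uniform limit, it is enough to prove the conclusion for a single $\psi$ on $D \times 2^n \times E$.

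Second, I would use the fact, stated in the excerpt, that a formula on $2^n \times D$ is exactly a $2^n$-indexed family of formulas on $D$: for each $i \in 2^n$ the slice $\psi_i(x,y) := \psi(x,i,y)$ is a formula on $D \times E$, and conversely $\psi$ is recovered from the finite list $(\psi_i)_{i \in 2^n}$. To glue the slices back together I would introduce, for each $i \in 2^n$, the clopen indicator
\begin{gather*}
  \chi_i(t) = \prod_{j < n} \bigl( i_j\, t_j + (1-i_j)(1-t_j) \bigr),
\end{gather*}
which is a polynomial in the first $n$ coordinates of $t \in 2^\bN$, hence a formula on $2^\bN$, and satisfies $\chi_i(t) = 1$ when $t{\restriction}n = i$ and $0$ otherwise. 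Then
\begin{gather*}
  \psi(x,t,y) = \sum_{i \in 2^n} \chi_i(t) \cdot \psi_i(x,y),
\end{gather*}
which displays $\psi$ as a continuous (in fact polynomial) combination of the $2^n$ formulas $\psi_i$ on $D \times E$ and the $2^n$ formulas $\chi_i$ on $2^\bN$.

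Combining the two steps, $\varphi$ is a uniform limit of formulas $\psi^{(m)}$, each of which is such a continuous combination, giving the desired representation. There is no substantive obstacle here: the lemma is really a book-keeping statement saying that the $\{0,1\}$-valued coordinates of $2^\bN$ induce clopen partitions, and these partitions allow the $D \times E$ and $2^\bN$ factors to be decoupled at each finite level before taking the uniform limit.
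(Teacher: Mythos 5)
Your argument is correct and takes essentially the same route as the paper's. The paper directly defines $\varphi_n(x,t,y) = \sum_{k \in 2^n} \delta_{n,k}(t)\,\varphi(x,\tilde k,y)$, where $\delta_{n,k}$ is the indicator of the cylinder above $k$ and $\tilde k$ is $k$ extended by zeros, and notes that uniform continuity of $\varphi$ in $t$ gives $\varphi_n \to \varphi$ uniformly; your two-step version (first approximate by a formula factoring through $D \times 2^n \times E$, then slice and recombine with the clopen indicators $\chi_i$) packages the same idea, with your $\chi_i$ playing the role of $\delta_{n,k}$ and your $\psi_i$ the role of $\varphi_{n,k}$.
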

\begin{proof}
  For $n \in \bN$ and $k \in 2^n$, let $\delta_{n,k}(t) = 1$ if $t$ extends $k$, and $0$ otherwise.
  Let also $\tilde{k} \in 2^\bN$ be the extension of $k$ by zeros, and $\varphi_{n,k}(x,y) = \varphi(x,\tilde{k},y)$.

  Then $\varphi_{n,k}$ is a formula on $D \times E$ and $\delta_{n,k}$ is a formula on $2^\bN$, so we may define a formula
  \begin{gather*}
    \varphi_n(x,t,y) = \sum_{k \in \{0,1\}^n} \delta_{n,k}(t) \varphi_{n,k}(x,y).
  \end{gather*}
  Since $\varphi(x,t,y)$ is uniformly continuous in $t$, $\varphi_n \rightarrow \varphi$ uniformly.
\end{proof}

\begin{dfn}
  \label{dfn:CodingSort}
  Let $T$ be a theory, $D$ a sort, and $D^0 \subseteq D$ a definable subset (or even type-definable, namely, the zero-set of a formula).
  We say that $D$ is a \emph{coding sort}, with \emph{exceptional set} $D^0$, if the following holds:
  \begin{enumerate}
  \item
    \label{item:CodingSortModel}
    \emph{Coding models}: if $M \vDash T$ and $a \in D(M) \setminus D^0(M)$, then there exists $N \preceq M$ such that $\dcl(a) = \dcl(N)$.
    We then say that $a$ \emph{codes} $N$.
  \item
    \label{item:CodingSortDensity}
    \emph{Density}: if $M \vDash T$ is separable, then the set of $a \in D(M) \setminus D^0(M)$ that code $M$ is dense in $D(M)$.
  \end{enumerate}
  We may denote a coding sort by $D$ alone, considering $D^0$ as implicitly given together with $D$.
\end{dfn}

The need for an exceptional set will arise at a later stage -- for the time being, we are simply going to ensure that its presence does not cause any trouble.

\begin{dfn}
  \label{dfn:LT2D}
  Let $T$ be a theory, say in a language $\cL$, and let $D$ be a coding sort for $T$.

  We define a single-sorted language $\cL_{2D}$ to consist of a binary predicate symbol for each formula on $D \times D$ (possibly restricting this to a dense family of such formulas).
  We define $T_{2D}$ as the $\cL_{2D}$-theory of $D$ -- namely, the theory of all $D(M)$, viewed naturally as an $\cL_{2D}$-structure, where $M$ varies over models of $T$.
\end{dfn}

Clearly, $T_{2D}$ is interpretable from $T$.
The $2$ is there to remind us that only binary predicates on $D$ are named in the language.

Our aim, in the end, is to recover from a groupoid the theory of some coding sort $D$, and show that is bi-interpretable with $T$.
In particular we need to recover the definable predicates on $D$ from the groupoid.
In \cite{BenYaacov:ReconstructionGroupoid} we managed to recover predicates of all arities, at the price of some additional work.
In the present paper we choose to follow a different path, recovering only binary predicates (i.e., only $T_{2D}$), and instead show that these suffice.

\begin{prp}
  \label{prp:CodingSortBiInterpretation}
  Let $T$ be a theory, say in a language $\cL$, and let $D$ be a coding sort for $T$.
  Then $T_{2D}$ is bi-interpretable with $T$.
\end{prp}
\begin{proof}
  Consider $T'$, obtained from $T$ by adjoining $D$ as a new sort, and naming the full induced structure.
  It is, by definition, an interpretational expansion of $T$, and it will suffice to show that it is also an interpretational expansion of $T_{2D}$.

  By \autoref{lem:ConstantSortFormulas}, every formula on $\bigl( D \times 2^\bN \bigr) \times \bigl( D \times 2^\bN \bigr)$ is definable in $T_{2D}$.
  In particular, every quotient of $D \times 2^\bN$ is interpretable in $T_{2D}$, as is every embedding of one such quotient in another.
  Therefore, if $(D_n)$ is an increasing chain of quotients of $D \times 2^\bN$, that converges in the sense of \autoref{prp:IncreasingLimit}, then $E = \widehat{\bigcup D_n}$ is interpretable in $T_{2D}$.

  Consider now a sort $E$ of $T$.
  Every member of $E$ belongs to a separable model of $T$ and is therefore definable from a member of $D$.
  By \autoref{prp:NonUniformCoding}, we may embed $E$ in a sort $\tilde{E}$ which is of the form $\widehat{\bigcup D_n}$, for appropriate quotients of $D \times 2^\bN$, as in the previous paragraph.
  This presentation of $E$ need not be unique, so let us just fix one such.

  Say $E'$ is another sort of $T$, so $E' \subseteq \tilde{E}' = \widehat{\bigcup D_n'}$ as above.
  Any formula on $\tilde{E} \times \tilde{E}'$ is, by \autoref{prp:IncreasingLimit}, coded by a sequence of formulas on $D_n \times D_n'$ (its restrictions), i.e., by formulas on $\bigl(D \times 2^\bN\bigr)^2$.
  It is therefore definable in $T_{2D}$.
  In particular, the distance to (the copy of) $E$ in $\tilde{E}$ is definable in $T_{2D}$, so each sort $E$ of $T$ can be interpreted in $T_{2D}$ (or at least, some isometric copy of $E$ is interpretable).
  Similarly, every formula on $E \times E'$, can be extended to a formula on $\tilde{E} \times \tilde{E}'$, so it is definable in $T_{2D}$ (on the copies of $E$ and $E'$).

  Consider now a finite product $E = \prod_{i<n} E_i$ of sorts of $T$.
  We have already chosen embeddings $E \subseteq \tilde{E}$ and $E_i \subseteq \tilde{E}_i$ as above.
  The projection map $\pi_i\colon E \rightarrow E_i$ can be coded by a formula on $E \times E_i$, namely
  \begin{gather*}
    \Gamma_{\pi_i}(x,y) = d_{E_i}(x_i,y),
  \end{gather*}
  where $\Gamma$ stands for ``Graph''.
  We have already observed that such a formula is definable in $T_{2D}$.
  It follows that the structure of $E$ as a product of the $E_i$ is definable in $T_{2D}$.
  Finally, any formula on $E_0 \times \cdots \times E_{n-1}$ can be viewed as a unary formula on the product $E$, which is, again, definable in $T_{2D}$.

  In conclusion, we can interpret every sort of $T$ in $T_{2D}$, and recover the full structure on these sorts.
  In other words, $T'$ is indeed an interpretational expansion of $T_{2D}$, completing the proof.
\end{proof}

\section{Groupoid constructions and reconstruction strategies}
\label{sec:Reconstruction}

In this section we propose a general framework for ``reconstruction theorems''.
To any coding sort $D$ (see \autoref{dfn:CodingSort}) we associate a topological groupoid $\bG_D(T)$ from which the theory $T_{2D}$ of \autoref{prp:CodingSortBiInterpretation} can be reconstructed.
Since $T$ is bi-interpretable with $T_{2D}$, the groupoid $\bG_D(T)$ determines the bi-interpretation class of $T$.
If the coding sort is moreover determined by the bi-interpretation class of $T$ (up to definable bijection), then the groupoid is a bi-interpretation invariant.
Various previously known constructions fit in this framework, as well as the one towards which aims the present paper.

For a general treatment of topological groupoids, we refer the reader to Mackenzie \cite{Mackenzie:LieGroupoids}, or, for the bare essentials we shall need here, to \cite{BenYaacov:ReconstructionGroupoid}.
We recall that a \emph{groupoid} $\bG$ is defined either as a small category in which all morphisms are invertible, or algebraically, as a single set (of all morphisms), equipped with a partial composition law and a total inversion map, satisfying appropriate axioms.
When viewed as a category, the set of objects can be identified with the set of identity morphisms, and we call it the \emph{basis} $\bB$ of $\bG$.
In the algebraic formalism, which we follow here, the basis is $\bB = \{e \in \bG : e^2 = e\} \subseteq \bG$.
If $g \in \bG$, then $s(g) = g^{-1} g$ and $t(g) = g g^{-1}$ are both defined, and belong to $\bB$, being the \emph{source} and \emph{target} of $g$, respectively.
The domain of the composition law is
\begin{gather*}
  \dom(\cdot) = \bigl\{ (g,h)  : s(g) = t(h) \bigr\} \subseteq \bG^2.
\end{gather*}

A \emph{topological groupoid} is a groupoid equipped with a topology in which the partial composition law and total inversion map are continuous.
In a topological groupoid the source and target maps $s,t\colon \bG \rightarrow \bB$ are continuous as well, $\bB$ is closed in $\bG$, and $\dom(\cdot)$ closed in $\bG^2$.
A topological groupoid $\bG$ is \emph{open} if, in addition, the composition law $\cdot\colon \dom(\cdot) \rightarrow \bG$ is open, or equivalently, if the source map $s\colon \bG \rightarrow \bB$ (or target map $t\colon \bG \rightarrow \bB$) is open.

A (topological) group is a (topological) groupoid whose basis is a singleton.
Such a topological groupoid is always open.

\begin{dfn}
  \label{dfn:CodingSortGroupoid}
  Let $T$ be a theory in a countable language, and $D$ a coding sort.
  We let $\tS_{D \times D}(T)$ denote the space of types of pairs of elements of $D$.
  We define the following two subsets of $\tS_{D \times D}(T)$:
  \begin{gather*}
    \bG_D^0(T) = \bigl\{ \tp(a,a) : a \in D^0 \bigr\},
    \\
    \bG_D(T) = \bG_D^0(T) \cup \bigl\{ \tp(a,b) : a, b \in D \setminus D^0 \ \& \ \dcl(a) = \dcl(b) \bigr\},
  \end{gather*}
  where $a$ and $b$ vary over all members of $D$ (or $D^0$) in models of $T$.
  We equip $\bG_D(T)$ with the induced topology, as well as with the following inversion law and  partial composition law:
  \begin{gather*}
    \tp(a,b)^{-1} = \tp(b,a),
    \qquad
    \tp(a,b) \cdot \tp(b,c) = \tp(a,c).
  \end{gather*}
  We also write $\bB_D(T)$ for $\tS_D(T)$, and identify $\tp(a) \in \bB_D(T)$ with $\tp(a,a) \in \bG_D(T)$.
  This identifies $\bB_D^0(T) = \tS_{D^0}(T)$ with $\bG_D^0(T)$.
\end{dfn}

Notice that the density hypothesis in \autoref{dfn:CodingSort} implies that $\bG_D(T)$ is dense in $\tS_{D \times D}(T)$.

\begin{conv}
  \label{conv:CodingSortGroupoid}
  We usually consider the theory $T$ and the coding sort $D$ to be fixed and drop them from notation, so $\bG = \bG_D(T)$, $\bB = \bB_D(T)$, and so on.
\end{conv}

\begin{lem}
  \label{lem:CodingSortGroupoid}
  Let $D$ be a coding sort for $T$.
  \begin{enumerate}
  \item As defined above $\bG = \bG_D(T)$ is a Polish open topological groupoid with basis $\bB = \bB_D(T)$.
  \item If $g = \tp(a,b) \in \bG$, then $s(g) = \tp(b) \in \bB$ is its source, and $t(g) = \tp(a) \in \bB$ its target.
  \item
    \label{item:CodingSortGroupoidBasisNeighbourhoods}
    If $d$ is a definable distance on $D$, then the family of sets
    \begin{gather*}
      U_r = \bigl\{ \tp(a,b) \in \bG : d(a,b) < r \bigr\},
    \end{gather*}
    for $r > 0$, forms a basis of open neighbourhoods for $\bB$ in $\bG$.
  \end{enumerate}
\end{lem}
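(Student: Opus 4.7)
The plan is to establish parts~(2) and~(3) first, then use them to organise part~(1).

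Part~(2) is purely formal: $\tp(a,b)^{-1}\cdot\tp(a,b)=\tp(b,a)\cdot\tp(a,b)=\tp(b,b)$, which under the identification $\tp(b)\leftrightarrow\tp(b,b)$ equals $\tp(b)\in\bB$; similarly $t(\tp(a,b))=\tp(a)$. For part~(3), the formula $d$ on $D\times D$ gives a continuous function $\tS_{D\times D}(T)\to\bR$, so each $U_r$ is open in $\bG$, and it contains $\bB$ because diagonal types satisfy $d=0$. For the basis property, given $V\supseteq\bB$ open in $\bG$, extend to an open $W\subseteq\tS_{D\times D}(T)$ with $V=W\cap\bG$; if no $U_r\subseteq V$, pick $g_n\in U_{1/n}\setminus V$, and by compactness of $\tS_{D\times D}(T)$ pass to a convergent subsequence $g_n\to g^*\notin W$. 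The condition $d=0$ passes to the limit, putting $g^*$ on the diagonal, hence in $\bB\subseteq W$—a contradiction.

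For part~(1), I would split the work into four pieces. \emph{Groupoid axioms}: given $g=\tp(a,b)$ and $h=\tp(b',c)$ with $\tp(b)=\tp(b')$, realize the two pairs in a saturated model and use an automorphism aligning the middles; since $a\in\dcl(b)$ and (the translated) $c\in\dcl(b)$, any two such alignments agree up to an automorphism fixing $b$, which fixes both $a$ and $c$, so $\tp(a,c)=g\cdot h$ is well-defined; the cases that touch $\bG^0$ reduce to identities because $D^0$ is type-definable. \emph{Polish}: $\tS_{D\times D}(T)$ is compact metrisable (countable language), and one checks $\bG$ is a $G_\delta$ subset by expressing $\dcl(a)=\dcl(b)$ as a countable intersection of appropriate open conditions on types, along the lines of \cite{BenYaacov:ReconstructionGroupoid}. \emph{Continuity and closedness of $\bB$}: inversion is the coordinate swap. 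For composition, if $g_n\to g$ and $h_n\to h$ with compatible sources/targets, compactness gives a convergent subsequence of the joint types $\tp(a_n,b_n,c_n)$ to some $\tp(a',b',c')$ projecting to $\tp(a,b)$ and $\tp(b,c)$; since $a'\in\dcl(b')$ and $c'\in\dcl(b')$, the type $\tp(a',c')$ is determined by $\tp(b')=\tp(b)$ and the dcl-witnesses, forcing $\tp(a',c')=\tp(a,c)$, whence $g_n\cdot h_n\to g\cdot h$ along the whole sequence. Finally $\bB=\{d=0\}\cap\bG$ is closed.

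The main obstacle will be \emph{openness} of the source map. The density clause of \autoref{dfn:CodingSort} is the key ingredient: given a basic open $U\ni g$ in $\bG$ and a type $p'$ close to $s(g)$ in $\bB$, one takes a realisation $b'$ of $p'$ in a separable model $M'$, uses density to find $b''\in M'$ near $b'$ with $\dcl(b'')=\dcl(M')$, and then any required $a''$ lies automatically in $\dcl(b'')$; combining this element-level approximation with part~(3) (to absorb the discrepancy between $\tp(b'')$ and $p'$) and with the standard openness of the projection $\tS_{D\times D}(T)\to\tS_D(T)$ in continuous logic yields a full open $\bB$-neighbourhood of $s(g)$ inside $s(U)$. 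The delicate point is precisely this translation between element-level density and the open-set-level statement of openness, which is where a careful compactness argument, pulling together parts~(2) and~(3), is needed.
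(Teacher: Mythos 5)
Your parts (2) and (3) are fine and match the paper's reasoning (part (3) is the same compactness argument, written with sequences rather than covers). Your expanded verification of the groupoid axioms and the continuity of composition fills in details the paper dismisses as "easy to check", and your argument via joint types and limits of triples is a reasonable way to do it. Your claim that $\bG$ is $G_\delta$ is slightly too fast: the set defined by $\dcl(a)=\dcl(b)$ includes non-diagonal pairs from $D^0$ that are \emph{not} in $\bG$; you need to intersect with the open set $a,b\notin D^0$ (giving a $G_\delta$) and then take the union with the closed set $\bG^0$, as the paper does.

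The genuine gap is the openness of the source/target map, which you describe as "the main obstacle" and then explicitly stop short of resolving. You correctly identify the density clause of \autoref{dfn:CodingSort} as the key ingredient, and the plan of realising a nearby $b''$ that codes the model is on the right track, but the sketch does not actually produce an open $\bB$-neighbourhood inside $t(U)$, and the invocation of "the standard openness of the projection $\tS_{D\times D}(T)\to\tS_D(T)$" would give openness of the projection of an open subset of the \emph{full} type space, not of $\bG$, so it does not apply directly. The missing trick in the paper is this: for $g\notin\bG^0$, one restricts attention to basic neighbourhoods of the form $U=\bigl[\varphi(x,y)>0\bigr]\cap\bG$ where the chosen formula $\varphi$ vanishes whenever $x\in D^0$ or $y\in D^0$ (possible by Urysohn since $g$ is away from the closed set $[x\in D^0]\cup[y\in D^0]$). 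Then $V=\bigl[\sup_y\varphi(x,y)>0\bigr]$ is open in $\bB$, and one shows $t(U)=V$: for $\tp(a)\in V$ realised in $M$, a witness $b$ with $\varphi(a,b)>0$ automatically has $a,b\notin D^0$; $a$ then codes some separable $N\preceq M$, one may take $b\in D(N)$, and by density plus uniform continuity of $\varphi$ one may replace $b$ by a nearby code of $N$, landing $\tp(a,b)\in U$. The other case $g\in\bG^0$ needs a separate, simpler argument with neighbourhoods $\bigl[\varphi(x)>0\bigr]\cap\bigl[d(x,y)<r\bigr]\cap\bG$, using $y=x$ as witness; your sketch does not distinguish the two cases. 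Without these devices, your openness argument is a program rather than a proof.
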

\begin{proof}
  It is easy to check that $\bG$ is a topological groupoid with basis $\bB$ and the stated source and target.
  Since the language is countable, the space $\tS_{D \times D}(T)$ is compact metrisable, and therefore Polish.
  As a condition on $\tp(a,b)$, the property $\dcl(a) = \dcl(b)$ is $G_\delta$ by \cite[Lemma~5.1]{BenYaacov:ReconstructionGroupoid}, and $a,b \notin D^0$ is open.
  Therefore $\bG$ is Polish, as the union of a closed subset and a $G_\delta$ subset of a Polish space.

  Each set $U_r$ is open and contains $\bB$.
  On the other hand, if $U$ is any open neighbourhood of $\bB$ in $\bG$, then it must be of the form $W \cap \bG$, where $W$ is an open neighbourhood of $\bB$ in $\tS_{D \times D}(T)$.
  Since $\bB$ is defined there by the condition $d(x,y) = 0$, and by compactness, $W$ must contains $\bigl[ d(x,y) < r \bigr]$ for some $r > 0$, so $U$ contains $U_r$.

  It is left to show that the target map $t\colon \bG \rightarrow \bB$ is open.
  First, consider $g \in \bG \setminus \bG^0 \subseteq \tS_{D \times D}(T)$.
  Let $[x \in D^0] \subseteq \tS_{D \times D}(T)$ be the set of types $p(x,y)$ that imply $x \in D^0$, and similarly for $y$, observing that $g \notin [x \in D^0] \cup [y \in D^0]$.
  Since this union is a closed set, $g$ admits a basis of neighbourhoods in $\tS_{D \times D}(T)$ that are disjoint from $[x \in D^0] \cup [y \in D^0]$.
  By Urysohn's Lemma and the identification of formulas with continuous functions on types, $g$ admits a basis of neighbourhoods of the form $\bigl[ \varphi(x,y) > 0 \bigr]$ where $\varphi(x,y)$ vanishes if $x \in D^0$ or $y \in D^0$.
  The family of sets $\bigl[ \varphi(x,y) > 0 \bigr] \cap \bG$ for such $\varphi$ is a basis of neighbourhoods for $g$ in $\bG$.

  Assume we are given such a neighbourhood $g \in U = \bigl[ \varphi(x,y) > 0 \bigr] \cap \bG$ (so $\varphi(x,y)$ vanishes if $x \in D^0$ or $y \in D^0$).
  Let $V = \left[ \sup_y \, \varphi(x,y) > 0 \right] \subseteq \tS_D(T) = \bB$.
  Then $V$ is open, and clearly $t(U) \subseteq V$.
  Conversely, assume that $\tp(a) \in V$, where $a \in D(M)$ for some $M \vDash T$.
  Then there exists $b \in D(M)$ such that $\varphi(a,b) > 0$.
  By hypothesis on $\varphi$, it follows that $a,b \notin D^0$.
  In particular, $a$ codes a separable $N \preceq M$, and we may assume that $b \in D(N)$.
  Now, by the density property and the uniform continuity of $\varphi$, we may assume that $b$ also codes $N$, so $\tp(a,b) \in U$.
  This proves that $t(U) = V$.

  Now let $g = \tp(a,a) \in \bG^0$.
  We have a basis of neighbourhoods of $g$ in $\bG$ consisting of sets of the form
  \begin{gather*}
    U = \bigl[ \varphi(x) > 0 \bigr] \cap \bigl[ d(x,y) < r \bigr] \cap \bG,
  \end{gather*}
  where $\varphi(a) > 0$.
  It is then easily checked that $t(U) = \bigl[ \varphi(x) > 0 \bigr]$, since we may always take $y = x$ as witness.

  This completes the proof.
\end{proof}

\begin{dfn}
  \label{dfn:UCC}
  Let $\bG$ be a topological groupoid.
  Say that a function $\varphi\colon \bG \rightarrow \bR$ is \emph{uniformly continuous and continuous (UCC)} if it is continuous on $\bG$, and in addition satisfies the following uniform continuity condition: for every $\varepsilon > 0$ there exists an open neighbourhood $U$ of the basis $\bB$ such that for every $g \in \bG$,
  \begin{gather*}
    h \in UgU \quad \Longrightarrow \quad |\varphi(g) - \varphi(h)| < \varepsilon
  \end{gather*}
\end{dfn}

Notice that unlike the situation for groups, the uniform continuity condition does not imply continuity (it is very well possible that $g_n \rightarrow h$ while $h \notin \bG g_n \bG$ for any $n$).

\begin{prp}
  \label{prp:UCC}
  Assume that $D$ is a coding sort for $T$, and let $\bG = \bG_D(T)$.
  Let $\varphi(x,y)$ be a formula on $D \times D$, and let $\varphi_\bG\colon \bG \rightarrow \bR$ be the naturally induced function
  \begin{gather*}
    g = \tp(a,b) \quad \Longrightarrow \quad \varphi_\bG(g) = \varphi(a,b).
  \end{gather*}
  Then the map $\varphi \mapsto \varphi_\bG$ defines a bijection between formulas on $D \times D$, up to equivalence, and UCC functions on $\bG$.
\end{prp}
\begin{proof}
  Let us first check that if $\varphi$ is a formula, then $\varphi_\bG$ is UCC.
  It is clearly continuous.
  The uniform continuity condition follows from the fact that $\varphi$ is uniformly continuous in each argument, together with the fact that for any $\delta > 0$ we may take choose $U = \bigl[ d(x,y) < \delta \bigr] \cap \bG$.

  Conversely, assume that $\psi \colon \bG \rightarrow \bR$ is UCC.
  By density, the function $\psi$ admits at most one continuous extension to $\tS_{D \times D}(T)$, and we need to show that one such exists.
  In other words, given $p \in \tS_{D \times D}(T)$ and $\varepsilon > 0$, it will suffice to find a neighbourhood $p \in V \subseteq \tS_{D \times D}(T)$ such that $\psi$ varies by less than $\varepsilon$ on $V \cap \bG$.
  If $p \in \bG$ this is easy, so we may assume that $p \notin \bG$.

  Let us fix $\varepsilon > 0$ first.
  By uniform continuity of $\psi$ and \autoref{lem:CodingSortGroupoid}\autoref{item:CodingSortGroupoidBasisNeighbourhoods}, there exists $\delta > 0$ such that $|\psi(g) - \psi(ugv)| < \varepsilon$ whenever $g \in \bG$, $u,v \in \bigl[d(x,y) < \delta\bigr] \cap \bG$, and $ugv$ is defined.

  Given $p = \tp(a_0,b_0)$, we may assume that $a_0,b_0 \in D(M)$ for some separable model $M$.
  Since $p \notin \bG$, we must have $a_0 \neq b_0$, and (possibly decreasing $\delta$) we may assume that $d(a_0,b_0) > 2\delta$.
  By the density property, there exist $a_1,b_1 \in D(M)$ that code $M$, with $d(a_0,a_1) + d(b_0,b_1) < \delta$, so $d(a_1,b_1) > \delta$.
  Let $g_1 = \tp(a_1,b_1) \in \bG$.
  By continuity, there exists an open neighbourhood $g_1 \in V_1 \subseteq \tS_{D \times D}(T)$ such that $|\psi(g_1) - \psi(h)| < \varepsilon$ for every $h \in V_1 \cap \bG$.
  Possibly decreasing $V_1$, we may further assume that $\tp(a,b) \in V_1$ implies $d(a,b) > \delta$
  We may even assume that $V_1$ is of the form $[\chi < \delta]$, where $\chi(x,y) \geq 0$ is a formula and $\chi(a_1,b_1) = \chi(g_1) = 0$.
  Define
  \begin{gather*}
    \chi'(x,y) = \inf_{x',y'} \, \bigl[ d(x,x') + d(y,y') + \psi(x',y') \bigr], \\
    V = \bigl[ \chi'(x,y) < \delta \bigr] \subseteq \tS_{D \times D}(T).
  \end{gather*}
  Then $V$ is open, $p \in V$, and $\tp(a,b) \in V$ implies $a \neq b$ (in other words, $V \cap \bB = \emptyset$).

  In order to conclude, consider any $g_2 = \tp(a_2,b_2) \in V \cap \bG$.
  Since $a_2 \neq b_2$, they cannot belong to the exceptional set, so both code some separable model $N$.
  By definition of $V$, there exist $a_3,b_3 \in D(N)$ such that $\chi(a_3,b_3) + d(a_2,a_3) + d(b_2,b_3) < \delta$.
  By the density property, and uniform continuity of $\chi$, we may assume that $a_3$ and $b_3$ code $N$ as well.
  Let $g_3 = \tp(a_3,b_3)$, $u = \tp(a_3,a_2)$, $v = \tp(b_2,b_3)$.
  Then $g_3 = u g_2 v \in V_1$, so
  \begin{gather*}
    |\psi(g_2) - \psi(g_1)|
    \leq |\psi(g_2) - \psi(g_3)| + |\psi(g_3) - \psi(g_1)|
    \leq 2\varepsilon.
  \end{gather*}
  Therefore $\psi$ varies by less than $4\varepsilon$ on $V \cap \bG$, which is good enough.
\end{proof}

\begin{cor}
  \label{cor:BoundedUCC}
  Every UCC function on $\bG_D(T)$ is bounded.
\end{cor}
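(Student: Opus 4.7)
The corollary should fall out immediately from the bijection established in \autoref{prp:UCC}. My plan is to invoke that proposition together with the compactness of the type space.

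Concretely, let $\psi \colon \bG \to \bR$ be UCC. By \autoref{prp:UCC}, there is a formula $\varphi(x,y)$ on $D \times D$ such that $\psi = \varphi_\bG$, i.e., $\psi\bigl(\tp(a,b)\bigr) = \varphi(a,b)$ for every $\tp(a,b) \in \bG$. Every formula in continuous logic is, by construction, a continuous function on the compact type space $\tS_{D \times D}(T)$, and therefore bounded on it. Since $\bG \subseteq \tS_{D \times D}(T)$, the restriction $\psi = \varphi_\bG$ is bounded by the same constant. (Alternatively, one can simply observe that the proof of \autoref{prp:UCC} actually produces a continuous extension of $\psi$ to all of $\tS_{D \times D}(T)$, which is then bounded by compactness.)

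I expect no real obstacle here: the work has already been done in \autoref{prp:UCC}, where continuity of the formula $\varphi$ on the compact space $\tS_{D \times D}(T)$ is established via the extension argument using density of $\bG$ in $\tS_{D \times D}(T)$ and the uniform continuity condition. The corollary is essentially a one-line consequence.
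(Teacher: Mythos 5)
Correct, and this is precisely the intended argument: the paper gives no explicit proof because the corollary follows at once from \autoref{prp:UCC} together with compactness of $\tS_{D \times D}(T)$, exactly as you say.
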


\begin{dfn}
  \label{dfn:Norm}
  Let $\bG$ be a groupoid.
  A \emph{semi-norm} on $\bG$ is a function $\rho\colon \bG \rightarrow \bR^+$ that satisfies
  \begin{itemize}
  \item $\rho\rest_\bB = 0$, and
  \item $\rho(g) = \rho(g^{-1})$, and
  \item $\rho(gh) \leq \rho(g) + \rho(h)$, when defined.
  \end{itemize}
  It is a \emph{norm} if $\rho(g) = 0$ implies $g \in \bB$.

  A norm $\rho$ is \emph{compatible} with a topology on $\bG$ if it is continuous, and the sets
  \begin{gather*}
    \{\rho < r\} = \bigl\{g \in \bG : \rho(g) < r\bigr\},
  \end{gather*}
  for $r > 0$, form a basis of neighbourhoods for $\bB$.
\end{dfn}

\begin{cor}
  \label{cor:CorrespondenceDistanceNormUCC}
  The correspondence of \autoref{prp:UCC} restricts to a one-to-one correspondence between definable distances $d$ on $D$ and compatible norms on $\bG = \bG_D(T)$.
\end{cor}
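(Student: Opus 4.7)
The plan is to verify that the bijection of \autoref{prp:UCC} restricts to the claimed correspondence in both directions. In the forward direction, suppose $d$ is a definable distance on $D$, so $d_\bG$ is UCC by \autoref{prp:UCC}. The semi-norm axioms of \autoref{dfn:Norm} translate directly: $d_\bG\rest_\bB = 0$ because $d(a,a) = 0$; $d_\bG(g^{-1}) = d_\bG(g)$ because $\tp(b,a) = \tp(a,b)^{-1}$ and $d$ is symmetric; and $d_\bG(gh) \leq d_\bG(g) + d_\bG(h)$ unfolds, along the composition $\tp(a,b) \cdot \tp(b,c) = \tp(a,c)$, to $d(a,c) \leq d(a,b) + d(b,c)$. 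Non-degeneracy $d_\bG(g) = 0 \Rightarrow g \in \bB$ is just $d(a,b) = 0 \Rightarrow a = b$, and compatibility with the topology is exactly \autoref{lem:CodingSortGroupoid}\autoref{item:CodingSortGroupoidBasisNeighbourhoods}.

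For the converse, let $\rho$ be a compatible norm on $\bG$. First I would verify that $\rho$ is UCC: continuity is part of compatibility, and taking $U = \{\rho < \varepsilon/2\}$, whenever $h = ugv$ with $u,v \in U$, subadditivity together with $\rho(u^{-1}) = \rho(u)$ yields $|\rho(g) - \rho(h)| \leq \rho(u) + \rho(v) < \varepsilon$. Hence $\rho = \varphi_\bG$ for a unique formula $\varphi$ on $D \times D$. The conditions $\varphi(a,a) = 0$ and $\varphi(a,b) = \varphi(b,a)$ are immediate from the norm axioms. The triangle inequality is a closed condition on $\tS_{D \times D \times D}(T)$; by \autoref{dfn:CodingSort}\autoref{item:CodingSortDensity} and the observation that any 3-type is realised in a separable model $M$, in which the elements coding $M$ densely approximate any triple, the set of types $\tp(a,b,c)$ with $a,b,c$ all coding a common separable model (and hence with $\tp(a,b),\tp(b,c),\tp(a,c) \in \bG$ admitting the expected composition) is dense there. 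On such triples, subadditivity of $\rho$ gives the triangle inequality for $\varphi$, which then extends everywhere by closedness.

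The main obstacle, and where compatibility actually earns its keep, is upgrading $\varphi$ from pseudo-distance to a genuine distance. Let $d_0$ denote the ambient definable distance on $D$. Both $\{\rho < s\}_{s > 0}$ and the sets $U_r = \bigl\{\tp(a,b) \in \bG : d_0(a,b) < r\bigr\}_{r > 0}$ are bases of open neighbourhoods of $\bB$ in $\bG$, the latter by \autoref{lem:CodingSortGroupoid}\autoref{item:CodingSortGroupoidBasisNeighbourhoods}. Hence for each $r > 0$ some $s > 0$ satisfies $\{\rho < s\} \subseteq U_r$, that is, $\varphi(a,b) < s \Rightarrow d_0(a,b) < r$ whenever $\tp(a,b) \in \bG$. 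Because $\{\varphi < s\}$ is open in $\tS_{D \times D}(T)$, $\{d_0 \leq r\}$ is closed there, and $\bG$ is dense in $\tS_{D \times D}(T)$ (as noted after \autoref{dfn:CodingSortGroupoid}), the implication extends: $\varphi(a,b) < s$ forces $d_0(a,b) \leq r$ for all pairs $(a,b)$. Consequently $\varphi(a,b) = 0$ forces $d_0(a,b) \leq r$ for every $r > 0$, whence $a = b$, and $\varphi$ is a definable distance as required.
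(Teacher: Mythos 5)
Your proof is correct and follows essentially the same route as the paper's: the forward direction is a direct verification plus \autoref{lem:CodingSortGroupoid}\autoref{item:CodingSortGroupoidBasisNeighbourhoods}; the converse obtains a formula $\varphi$ from \autoref{prp:UCC}, verifies the pseudo-distance axioms by density of coding triples, and then uses compatibility of the norm against a reference definable distance, extending the resulting open-implies-closed implication from $\bG$ to all of $\tS_{D \times D}(T)$ to get non-degeneracy. You simply spell out a couple of steps the paper leaves implicit (that a compatible norm is UCC; the closed/dense extension argument); the content is the same.
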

\begin{proof}
  Let $d$ be a definable distance on $D \times D$ and $\rho_d$ the corresponding UCC function on $\bG$.
  Then $\rho_d$ is clearly a continuous norm, and it is a compatible norm by \autoref{lem:CodingSortGroupoid}\autoref{item:CodingSortGroupoidBasisNeighbourhoods}.

  The converse is more delicate.
  Let $\rho$ be a compatible norm.
  Then it is continuous, and it is easy to see that every continuous semi-norm is UCC, so $\rho = \varphi_\bG$ (in the notations of \autoref{prp:UCC}) for some formula $\varphi(x,y)$.
  If $a,b,c \in D$ all code the same separable model, then $\varphi(a,a) = 0$ and $\varphi(a,b) \leq \varphi(a,c) + \varphi(b,c)$.
  The set of types of such triplets is dense in $\tS_{D \times D \times D}(T)$, by the density property, so the same holds throughout and $\varphi$ defines a pseudo-distance.

  It is left to show that $\varphi$ defines a distance (and not merely a pseudo-distance).
  Let $d$ be any definable distance on $D$, say the one distinguished in the language.
  We already know that $\rho_d$ is a compatible norm.
  Therefore, for every $\varepsilon > 0$ there exists $\delta > 0$ such that $\{\rho < \delta\} \subseteq \{\rho_d < \varepsilon\}$.
  As in the previous paragraph, this means that the (closed) condition $\varphi(a,b) < \delta \ \Longrightarrow \ d(a,b) \leq \varepsilon$ holds on a dense set of types, and therefore throughout.
  In particular, if $\varphi(a,b) = 0$, then $a = b$, and the proof is complete.
\end{proof}

Let $T$ be a theory, $D$ a coding sort for $T$, and $\bG = \bG_D(T)$.
Then from $\bG$, given as a topological groupoid, we can essentially recover the language $\cL_D$ and the theory $T_{2D}$, as follows.
\begin{enumerate}
\item
  \label{item:ReconstructionStepNorm}
  We choose, arbitrarily, a compatible norm $\rho$ on $\bG$ (which exists, by \autoref{cor:CorrespondenceDistanceNormUCC}).
\item
  \label{item:ReconstructionStepLanguage}
  We let $\cL_\bG$ consists of a single sort, also named $D$, together with a binary predicate symbol $P_\psi$ for each UCC function $\psi$ on $\bG$.
  We know that $\psi$ is bounded (\autoref{cor:BoundedUCC}), and we impose the same bound on $P_\psi$.
  We also know that for every $\varepsilon > 0$ there exists a neighbourhood $U$ of $\bB$ such that $h \in UgU$ implies $|\psi(g) - \psi(h)| < \varepsilon$, and since $\rho$ is compatible, there exists $\delta = \delta_\psi(\varepsilon) > 0$ such that the same holds when $U = \{\rho < \delta\}$.
  We then impose the corresponding modulus of uniform continuity on $P_\psi$, namely, requiring that
  \begin{gather*}
    d(x,x') \vee d(y,y') < \delta_\psi(\varepsilon)
    \quad \Longrightarrow \quad
    |P_\psi(x,y) - P_\psi(x',y')| \leq \varepsilon.
  \end{gather*}
  We also use the bound on $\rho$ as bound on the distance predicate.
\item
  \label{item:ReconstructionModels}
  Let us fix $e \in \bB$, and consider the set
  \begin{gather*}
    e \bG = \{ g \in \bG : t_g = e \}.
  \end{gather*}
  If $g,h \in e \bG$, then $g^{-1} h$ is defined, and for any UCC $\psi$ we let:
  \begin{gather*}
    P_\psi(g,h) = \psi(g^{-1} h).
  \end{gather*}
  In particular, $d(g,h) = P_\rho(g,h) = \rho(g^{-1} h)$ is a distance function on $e \bG$.

  Assume now that $g',h' \in e \bG$ as well, and $d(g,g') \vee d(h,h') < \delta = \delta_\psi(\varepsilon)$.
  Let $u = g'^{-1} g$ and $v = h^{-1} h'$.
  Then $g'^{-1} h' = u g^{-1} h v$, and $u,v \in \{\rho < \delta\}$, so indeed
  \begin{gather*}
    |P_\psi(g,h) - P_\psi(g',h')| \leq \varepsilon,
  \end{gather*}
  as required.
  The bounds are also respected, so $e \bG$, equipped with the distance and interpretations of $P_\psi$, is an $\cL_\bG$-pre-structure, and its completion $\widehat{e \bG}$ is an $\cL_\bG$-structure.
\item
  \label{item:ReconstructionTheory}
  We define $T_\bG$ as the theory of the collection of all $\cL_\bG$-structures of this form:
  \begin{gather*}
    T_\bG = \Th_{\cL_\bG}\Bigl( \widehat{e \bG} : e \in \bB \Bigr).
  \end{gather*}
\end{enumerate}

By ``essentially recover'', we mean the following.

\begin{thm}
  \label{thm:Reconstruction}
  Let $T$ be a theory, $D$ a coding sort for $T$, and $\bG = \bG_D(T)$.
  Let $\cL_\bG$ and $T_\bG$ be constructed as in the preceding discussion.
  Then $T_\bG$ and $T_{2D}$ are one and the same, up to renaming the binary predicate symbols, and up to an arbitrary choice of the distance on the sort $D$ (from among all definable distances).

  In particular, this procedure allows us to recover from $\bG$ a theory $T_\bG$ that is bi-interpretable with $T$.
\end{thm}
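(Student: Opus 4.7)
The plan is to identify three layers in order: first the language $\cL_\bG$ against $\cL_{2D}$, then each structure $\widehat{e\bG}$ against some $D(N)$, and finally the theories themselves.

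For the language, \autoref{prp:UCC} gives a canonical bijection between UCC functions on $\bG$ and formulas on $D \times D$ (up to equivalence), matching the generating predicate symbols of $\cL_\bG$ exactly with those of $\cL_{2D}$. By \autoref{cor:CorrespondenceDistanceNormUCC}, the compatible norm $\rho$ chosen in step~\autoref{item:ReconstructionStepNorm} corresponds to a definable distance $d_\rho$ on $D$, which is the clause ``up to an arbitrary choice of the distance''. It remains to observe that the bounds and moduli of continuity imposed on each $P_\psi$ in step~\autoref{item:ReconstructionStepLanguage} are exactly those satisfied by the corresponding formula $\varphi_\psi$ with respect to $d_\rho$, both being read off from the UCC condition of \autoref{dfn:UCC} via the basis of neighbourhoods of \autoref{lem:CodingSortGroupoid}\autoref{item:CodingSortGroupoidBasisNeighbourhoods}.

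The core step is the identification at the level of structures. Fix $e = \tp(a) \in \bB \setminus \bB^0$, realise $a \in D(M) \setminus D^0(M)$, and use the coding property to obtain a separable $N \preceq M$ with $\dcl(a) = \dcl(N)$. I would define $\iota\colon e\bG \to D(N)$ by $\iota\bigl(\tp(a,b)\bigr) = b$: well-defined because $b \in \dcl(a)$, so $b$ is uniquely determined by $\tp(a,b)$ once $a$ has been fixed. Three checks then establish that $\iota$ is an isometric, predicate-preserving embedding with dense image. (i) $d_\bG(g,h) = \rho(g^{-1}h) = \rho\bigl(\tp(b,b')\bigr)$ equals $d_\rho(b,b')$ by the UCC correspondence; (ii) $P_\psi(g,h) = \psi(g^{-1}h) = \varphi_\psi(b,b')$ for every $\psi$; (iii) by the density clause of \autoref{dfn:CodingSort}, the set of $b \in D(N) \setminus D^0$ coding $N$ is dense in $D(N)$, and each such $b$ lies in $\iota(e\bG)$ because $\tp(a,b) \in e\bG$. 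Hence $\widehat{e\bG} \cong D(N)$ as $\cL_\bG$-structures, under the identification $\cL_\bG = \cL_{2D}$ established above.

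Passing to theories, as $e$ varies over $\bB \setminus \bB^0$ the structures $\widehat{e\bG}$ range exactly over the $\cL_{2D}$-structures $D(N)$ with $N \vDash T$ separable: every such $N$ admits, by the density property, a coding $a \in D(N) \setminus D^0$ and hence produces such an $e$. By downward Löwenheim--Skolem, the $\cL_{2D}$-theory of this family is already $T_{2D}$, so $T_\bG = T_{2D}$. The bi-interpretability of $T_\bG$ with $T$ then follows from \autoref{prp:CodingSortBiInterpretation}.

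The main obstacle is bookkeeping around the exceptional set: for $e \in \bB^0$ the set $e\bG$ collapses to $\{e\}$ and $\widehat{e\bG}$ is a one-point structure, so one must check that including these degenerate pieces in $T_\bG$ introduces no new axioms beyond $T_{2D}$. The natural argument is that such a singleton corresponds to a substructure $\{a\} \subseteq D(N)$ with $a \in D^0(N)$, where $N$ is some separable model already captured by a non-exceptional $e$; the uniformities of the first paragraph together with the density property ensure that any sentence failing in the singleton is already witnessed to fail in $D(N)$ via approximation by coding points.
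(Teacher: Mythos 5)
Your core argument is correct and follows the paper's own decomposition: \autoref{prp:UCC} and \autoref{cor:CorrespondenceDistanceNormUCC} identify the language $\cL_\bG$ with $\cL_{2D}$ up to the choice of definable distance, and for each non-exceptional $e = \tp(a) \in \bB \setminus \bB^0$ one obtains an isomorphism $\widehat{e\bG} \simeq D(N)$, where $N$ is the separable model coded by $a$. The only cosmetic difference from the paper is that you define $\iota\colon e\bG \to D(N)$ by $\tp(a,b) \mapsto b$, whereas the paper constructs its inverse $b \mapsto \tp(a,b)$.

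Your handling of the exceptional set, however, does not hold up. You are right to flag it: for $e \in \bB^0$ (for the universal star sort, the unique such $e$ is $\tp(0)$), the set $e\bG$ reduces to $\{e\}$, so $\widehat{e\bG}$ is a one-point structure, and the formula defining $T_\bG$ in step~\autoref{item:ReconstructionTheory} as literally written would include it. But your proposed resolution --- that any sentence failing in the singleton ``is already witnessed to fail in $D(N)$ via approximation by coding points'' --- is false, because $\sup$-quantified sentences are not inherited by subsets. Concretely, for some $\delta > 0$ the closed condition $\delta \dotminus \sup_{x,y} d(x,y) = 0$ (asserting $\diam D \geq \delta$) belongs to $T_{2D}$: for $D^* = D^*_\Phi$ the diameter in the canonical metric is at least $1$, since $d(0, 1\cdot x) = 1$, and any other definable distance is uniformly equivalent. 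Yet in the one-point structure the left-hand side evaluates to $\delta > 0$, so $\widehat{e\bG}$ with $e \in \bB^0$ is not a model of $T_{2D}$, and including it would make $T_\bG$ strictly weaker than $T_{2D}$, contradicting the claimed equality. The correct resolution is to restrict to $e \in \bB \setminus \bB^0$. This is still a construction purely in terms of the groupoid, since $\bB^0 = \{ e \in \bB : e\bG = \{e\} \}$ (for $e \notin \bB^0$ the density property yields at least two distinct codes of the same separable model, hence a non-identity element of $e\bG$), and it is in effect what the paper's own proof does, since it only ever takes $e$ to be the type of a code.
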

\begin{proof}
  By \autoref{cor:CorrespondenceDistanceNormUCC}, step \autoref{item:ReconstructionStepNorm} consists exactly of choosing a definable distance $d$ on $D$, and the corresponding norm $\rho = d_\bG$.
  This choice is irremediably arbitrary.
  By \autoref{prp:UCC}, in step \autoref{item:ReconstructionStepLanguage} there is a natural bijection between symbols of $\cL_D$ (corresponding to formulas $\varphi(x,y)$ on $D \times D$, up to equivalence) and symbols of $\cL_\bG$: to $\varphi$ we associate the UCC function $\psi_\varphi = \varphi_\bG$, to which in turn we associate the symbol $P_{\psi_\varphi}$.

  Finally, let $M \vDash T$ be separable, let $a \in D(M)$ be a code for $M$, and let $e = \tp(a) \in \bB$.
  Let $D(M)_1$ denote the set of $b \in D(M)$ that also code $M$.
  If $b \in D(M)_1$, then $g_b = \tp(a,b) \in e \bG$.
  Moreover, if $b,c \in D(M)_1$ and $\varphi$ is a formula on $D \times D$, then $\tp(b,c) = g_b^{-1} g_c \in \bG$, so
  \begin{gather*}
    \varphi(b,c) = \psi_\varphi(g_b^{-1} g_c) = P_{\psi_\varphi}(g_b,g_c).
  \end{gather*}
  In particular, $d(b,c) = d(g_b,g_c)$ (where the first is the distance we chose on $D$, and the second the distance we defined on $e \bG$ in step \autoref{item:ReconstructionModels}).
  Thus, up to representing $\varphi$ by the symbol $P_{\psi_\varphi}$, the map $b \mapsto g_b$ defines an isomorphism of the $\cL_D$-pre-structure $D(M)_1$ with the $\cL_\bG$-pre-structure $e \bG$.
  This extends to an isomorphism of the respective completions: $D(M) \simeq \widehat{e \bG}$.

  It follows that, up to this change of language (and choice of distance), the theory $T_\bG$ defined in step \autoref{item:ReconstructionTheory} is the theory of all separable models of $T_{2D}$.
  Since $T$ is in a countable language, $T_{2D}$ is in a ``separable language'', so it is equal to the theory of all its separable models.

  By \autoref{prp:CodingSortBiInterpretation}, $T$ is bi-interpretable with $T_{2D}$, and therefore also with $T_\bG$.
\end{proof}

Having achieved this, we are ready to start producing reconstruction theorems: all we need is a coding sort that only depends (up to definable bijection) on the bi-interpretation class of $T$.

\begin{exm}
  \label{exm:ReconstructionAleph0Categorical}
  Let $T$ be an $\aleph_0$-categorical theory.
  Let $M$ be its unique separable model, and let $a$ be any sequence (possibly infinite, but countable), in any sort or sorts, such that $\dcl(a) = \dcl(M)$ (for example, any dense sequence will do).
  Let $D_{T,0}$ be the set of realisations of $p = \tp(a)$.
  Since $T$ is $\aleph_0$-categorical, $D_{T,0}$ is a definable set, i.e., a sort.
  It is easy to check that it is a coding sort (with no exceptional set).

  If $b$ is another code for $M$, and $D'_{T,0}$ is the set of realisations of $\tp(b)$, then $\dcl(a) = \dcl(b)$ and $\tp(a,b)$ defines the graph of a definable bijection $D_{T,0} \simeq D'_{T,0}$.
  Therefore, $D_{T,0}$ does not depend on the choice of $a$.
  Moreover, assume that $T'$ is an interpretational expansion of $T$.
  Then it has a model $M'$ that expands $M$ accordingly.
  But then $\dcl(M') = \dcl(M) = \dcl(a)$ (as calculated when working in $T'$), so $D_{T',0} = D_{T,0}$.
  It follows that $D_{T,0}$ only depend on the bi-interpretation class of $T$.

  Since $\tS_{D_{T,0}}(T) = \{p\}$ is a singleton, the groupoid
  \begin{gather*}
    G(T) = \bG_{D_{T,0}}(T)
  \end{gather*}
  is in fact a group.
  It only depends on the bi-interpretation class of $T$ (since $D_{T,0}$ only depends on it) and by \autoref{thm:Reconstruction}, it is a complete bi-interpretation invariant for $T$.

  We leave it to the reader to check that
  \begin{gather*}
    G(T) \simeq \Aut(M),
  \end{gather*}
  and that the reconstruction result is just a complicated restatement of those of \cite{Ahlbrandt-Ziegler:QuasiFinitelyAxiomatisable,BenYaacov-Kaichouh:Reconstruction}.
\end{exm}

\begin{exm}
  \label{exm:ReconstructionClassical}
  Let $T$ be a theory in classical logic.
  In \cite{BenYaacov:ReconstructionGroupoid}, using an arbitrary parameter $\Phi$, we gave an explicit construction of a set of infinite sequences $D_\Phi$.
  We showed that it is a definable set in the sense of continuous logic, and that its interpretation in models of $T$ only depend on the bi-interpretation class of $T$ (up to a definable bijection).
  It also follows from what we showed that it is a coding sort (without exceptional set).
  Since it is unique, let us denote it by $D_T$ (in fact, we could also just denote it by $D$: its construction only depends on the language, and then we simply restrict our consideration of it to models of $T$).
  We then proved that the groupoid
  \begin{gather*}
    \bG(T) = \bG_{D_T}(T)
  \end{gather*}
  is a complete bi-interpretation invariant for $T$.
  This is a special case of \autoref{thm:Reconstruction}.
\end{exm}

\begin{exm}
  \label{exm:ReconstructionUniversalSkloem}
  Let $T$ be a (complete) theory in continuous logic.
  In \cite{BenYaacov:ReconstructionGroupoid} we defined when a sort $D_T$ is a \emph{universal Skolem sort}, and proved that if such a sort exists, then it is unique, and only depends on the bi-interpretation class of $T$ (in contrast with the previous example, here we do not have a general construction for such a sort, let alone a uniform one, so it really does depend on $T$).
  We proved that if $T$ admits a universal Skolem sort $D_T$, then
  \begin{gather*}
    \bG(T) = \bG_{D_T}(T)
  \end{gather*}
  is a complete bi-interpretation invariant for $T$.

  Again, we also proved that $D_T$ is a coding sort, so this is a special case of \autoref{thm:Reconstruction}.
\end{exm}

\begin{rmk}
 \label{rmk:ReconstructionUniversalSkloemSpecialCases}
   \autoref{exm:ReconstructionUniversalSkloem} encompasses the two previous examples in the following sense.
  \begin{itemize}
  \item If $T$ is classical, then the sort $D_T$ of \autoref{exm:ReconstructionClassical} is a universal Skolem sort, so \autoref{exm:ReconstructionClassical} is a special case of \autoref{exm:ReconstructionUniversalSkloem}.
  \item If $T$ is $\aleph_0$-categorical, then $D_T = D_{T,0} \times 2^\bN$ is a universal Skolem sort, so
    \begin{gather*}
      \bG(T) \simeq 2^\bN \times G(T) \times 2^\bN, \qquad \text{with groupoid law} \qquad (\alpha,g,\beta) \cdot (\beta,h,\gamma) = (\alpha, gh, \gamma).
    \end{gather*}
    Consequently, $\bB(T) = 2^\bN$, and if $e \in \bB(T)$, then $G(T) \simeq e \bG(T) e$.
    Therefore, the reconstruction of \autoref{exm:ReconstructionAleph0Categorical} can be recovered from a special case of \autoref{exm:ReconstructionUniversalSkloem}.
  \end{itemize}
  In both \autoref{exm:ReconstructionClassical} and \autoref{exm:ReconstructionUniversalSkloem}, the basis $\tS_{D_T}(T)$ is homeomorphic to the Cantor space $2^\bN$.
\end{rmk}

However, in \cite{BenYaacov:ReconstructionGroupoid} we also gave an example of a continuous theory which does not admit a universal Skolem sort.
In particular, the explicit construction of $D_T$ as $D_\Phi$ in the case of a classical theory simply does not extend, as is, to continuous logic.
The rest of this article is dedicated to presenting a modified version of this construction, giving rise to a coding sort that \emph{does} have an exceptional set (a very simple one, consisting of a single point), allowing us to prove a reconstruction theorem for every first order theory in a countable language (in continuous logic, or classical one).

\section{Star spaces}
\label{sec:StarSpace}

Before we can construct our coding sort, we require technical detour, where we introduce star sets in general, and, in the model-theoretic context, star sorts.
For the time being, we must ask the reader to bear with us -- the usefulness of these notions for our goal is explained in some detail at the beginning of \autoref{sec:Witnesses}.

\begin{dfn}
  \label{dfn:StarSpace}
  A \emph{retraction set} is a set $X$ equipped with an action of the multiplicative monoid $[0,1]$.
  In particular, $1 \cdot x = x$ for all $x \in X$, and $\alpha (\beta x) = (\alpha \beta) x$ (so this is a little stronger than a homotopy).

  It is a \emph{star set} if $0 \cdot x$ does not depend on $x$.
  We then denote this common value by $0 \in X$, and call it the \emph{root} of $X$.

  A \emph{topological retraction (star) space} is one equipped with a topology making the action $[0,1] \times X \rightarrow X$ continuous.

  A \emph{metric star space} is one equipped with a distance function satisfying $d(\alpha x, \alpha y) \leq \alpha d(x,y)$ and $d(\alpha x, \beta x) = |\alpha-\beta| \|x\|$, where $\|x\| = d(x,0)$.
\end{dfn}

Notice that a retraction set $X$ can be fibred over $0 \cdot X$, with each fibre a star set.
We could also define a metric retraction space by putting infinite distance between fibres.

\begin{exm}
  \label{exm:StarSpaceLine}
  The real half line $\bR^+$ is naturally a topological and metric star space.
  The interval $[0,1]$ (or $[0,r]$ for any $r > 0$) is a compact topological and bounded metric star space.
\end{exm}

\begin{exm}
  \label{exm:StarSetProduct}
  If $X$ and $Y$ are two star sets, then $X \times Y$, equipped with the diagonal action $\alpha(x,y) = (\alpha x, \alpha y)$, is again a star set.
  If both are metric star spaces, then equipping the product with the maximum distance makes it a metric star space as well (here the maximum distance is preferable to the sum distance, since it preserves bound hypotheses on the diameter).
\end{exm}

\begin{exm}
  \label{exm:Cone}
  Let $X$ be a set, and equip $[0,1] \times X$ with the equivalence relation
  \begin{gather*}
    (\alpha, x) \sim (\beta,y) \qquad \Longleftrightarrow \qquad (\alpha,x) = (\beta,y) \quad \text{or} \quad \alpha = \beta = 0.
  \end{gather*}
  The \emph{cone} of $X$ is the quotient space
  \begin{gather*}
    *X = \bigl( [0,1] \times X \bigr) / {\sim}.
  \end{gather*}
  A member of $*X$ will be denoted $[\alpha,x]$.
  We equip it with the action $\alpha \cdot [\beta,x] = [\alpha\beta,x]$.
  This makes it a star set, with $[0,x] = 0$ regardless of $x$.

  We shall tend to identify $x \in X$ with $[1,x] \in *X$, so $[\alpha,x]$ may also be denoted by $\alpha x$.

  When $X$ is a compact Hausdorff space, the relation $\sim$ is closed, $*X$ is again compact and Hausdorff, and the identification $X \subseteq *X$ is a topological embedding.
  When $X$ is a bounded metric space, say $\diam(X) \leq 2$, we propose to metrise $*X$ by
  \begin{gather}
    \label{eq:StarDistance}
    d(\alpha x, \beta y) = |\alpha - \beta| + (\alpha \wedge \beta) d(x,y).
  \end{gather}
  In particular, if either $\alpha$ or $\beta$ vanishes, then the right hand side does not depend on either $x$ or $y$, so $d$ is well defined, and $d(0,x) = 1$ for all $x \in X$.

  The only property that is not entirely obvious is the triangle inequality, namely
  \begin{gather}
    \label{eq:StarDistanceTriangle}
    |\alpha-\gamma| + (\alpha \wedge \gamma) d(x,z)
    \leq
    |\alpha-\beta| + (\alpha \wedge \beta) d(x,y) + |\beta-\gamma| + (\beta \wedge \gamma) d(y,z).
  \end{gather}
  We may assume that $\alpha \geq \gamma$, so $\alpha \wedge \gamma = \gamma$.
  If $\beta \geq \gamma$, then \autoref{eq:StarDistanceTriangle} holds trivially since $\alpha \wedge \beta \geq \gamma = \beta \wedge \gamma$.
  If $\beta \leq \gamma$, then the right hand side evaluates to
  \begin{gather*}
    (\alpha - \gamma) + 2(\gamma - \beta) + \beta d(x,y) + \beta d(y,z).
  \end{gather*}
  Applying the triangle inequality for $X$ and the hypothesis that $2 \geq d(x,z)$, we obtain \autoref{eq:StarDistanceTriangle} in this case as well.

  We conclude that $(*X,d)$ is a metric space.
  The embedding $X \subseteq *X$ is isometric, and $\diam(*X) = 1 \vee \diam(X)$.
  If $X$ is complete, then so is $*X$.

  A special instance of this is the cone of a singleton, which can be identified with the interval $[0,1]$ equipped with the natural star, topological or metric structures.
\end{exm}

\begin{exm}
  \label{exm:GeneralisedCone}
  More generally, let $S$ be a star set, $X$ an arbitrary set, and define
  \begin{gather*}
    (s, x) \sim (t,y) \qquad \Longleftrightarrow \qquad (s,x) = (t,y) \quad \text{or} \quad s = t = 0,
    \\
    S*X = \bigl( S \times X \bigr) / {\sim}.
  \end{gather*}
  As in the definition of a cone, a member of $S*X$ will be denoted $[s,x]$ or $s * x$ (in analogy with the notation $\alpha x$).
  We make $S * X$ into a star set by defining $\alpha \cdot (s * x) = (\alpha s) * x$.

  This indeed generalises the cone construction, with $*X = [0,1] * X$.

  When $S$ and $X$ are compact Hausdorff spaces, the relation $\sim$ is closed, and $S*X$ is again compact and Hausdorff.
  When $S$ and $X$ are bounded metric spaces, say $\diam(X) \leq 2$ and $\|s\| \leq 1$ for all $s \in S$, we equip $S*X$ with the distance function
  \begin{gather*}
    d(s * x, t * y) = d(s,t) \vee d\bigl( \|s\| x, \|t\| y \bigr),
  \end{gather*}
  where $d\bigl( \|s\| x, \|t\| y \bigr)$ is calculated in $*X$.
  Notice that $\|s * x\| = \|s\|$, and the distance functions on $[0,1] * X$ and $*X$ agree.
\end{exm}

\begin{rmk}
  \label{rmk:GeneralisedConeIterated}
  The generalised cone construction of \autoref{exm:GeneralisedCone} can be easily iterated: $S * (X \times Y) = (S * X) * Y$, identifying $s* (x,y) = s * x * y$.
  In the metric case, assume that $X$ and $Y$ are both of diameter at most two.
  Equipping products with the maximum distance, $\diam(X \times Y) \leq 2$ as well, and the obvious map $*(X \times Y) \rightarrow *X \times *Y$ sending $\alpha(x,y) \mapsto (\alpha x,\alpha y)$ is isometric.
  It follows that the identification $S \times (X \times Y) = (S * X) * Y$ is isometric:
  \begin{align*}
    d(s * x * y, t * u * v)
    & = d(s * x, t * u) \vee d\bigl( \|s * x\| y, \|t * u\| v\bigr)
    \\
    & = d(s,t) \vee d\bigl( \|s\| x, \|t\| u \bigr) \vee d\bigl( \|s\|y, \|t\| v\bigr)
    \\
    & = d(s,t) \vee d\bigl( \|s\| (x,y), \|t\| (u,v) \bigr)
    \\
    & = d\bigl( s * (x,y), t * (u,v) \bigr).
  \end{align*}
  In particular, $*(X \times Y) = (*X) * Y$.
\end{rmk}

\begin{dfn}
  \label{dfn:Homogeneous}
  Let $X$ and $Y$ be two retraction (star) spaces.
  A map $f\colon X \rightarrow Y$ is \emph{homogeneous} if $f(\alpha x) = \alpha f(x)$.
  It is \emph{sub-homogeneous} if $f(\alpha x) = \beta f(x)$ for some $\beta \leq \alpha$.

  The latter will be mostly used when $Y = \bR^+$, in which sub-homogeneity becomes $f(\alpha x) \leq \alpha f(x)$.
\end{dfn}

We may also equip a retraction space with a partial order defined by $\alpha x \leq x$ whenever $\alpha \in [0,1]$.
This induces the usual partial order on $\bR^+$, and sub-homogeneity can be stated as $f(\alpha x) \leq \alpha f(x)$ for arbitrary maps between retraction spaces.
Notice also that our definition of a metric retraction space $X$ simply requires the distance function to be sub-homogeneous on $X \times X$.

\section{Star sorts}
\label{sec:StarSort}

\begin{dfn}
  \label{dfn:StarSort}
  A \emph{star sort} is a sort equipped with a definable structure of a metric star space.
  In particular, this means that the map $(\alpha,x) \mapsto \alpha x$ is definable (and not just $x \mapsto \alpha x$ for each $\alpha$).
  Star sorts will usually be denoted by $D^*$, $E^*$, and so on.
\end{dfn}

\begin{dfn}
  \label{dfn:SubHomogeneousFormula}
  Let $D^*$ be a star sort and $\varphi(u,y)$ a formula on $D^* \times E$.
  We say that $\varphi$ is \emph{sub-homogeneous} if it satisfies $\alpha \varphi(u,y) \geq \varphi(\alpha u,y) \geq 0$.

  We may specify that it is sub-homogeneous in the variable $u$, especially if $u$ is not the first variable.
  More generally, we may say that $\varphi(u,v,\ldots)$ is sub-homogeneous in $(u,v)$ if $\alpha \varphi(u,v,\ldots) \geq \varphi(\alpha u,\alpha v,\ldots) \geq 0$, and similarly for any other tuple of variables.

  If it is sub-homogeneous in the tuple of all its variables, we just say that $\varphi$ is \emph{jointly sub-homogeneous}.
\end{dfn}

\begin{exm}
  \label{exm:StarSortConstruction}
  \begin{itemize}
  \item If $D$ is any sort (of diameter at most two), then the cone $*D$, equipped with the distance proposed in \autoref{exm:Cone}, is a star sort.
    More generally, if $D^*$ is a star sort and $E$ an arbitrary sort, then $D^* * E$, as per \autoref{exm:GeneralisedCone}, is a star sort.
  \item Any finite product of star sorts, equipped with the diagonal action of $[0,1]$ and the maximum or sum distance, is again a star sort.
    Similarly, any countable product of star sorts, equipped with $d(u,v) = \sum_n \frac{d_n(u_n,v_n)}{2^n \diam(d_n)}$, is again a star sort, and the same holds with supremum in place of sum.
  \item If $D^*$ is a star sort and $d'(u,v)$ a jointly sub-homogeneous definable pseudo-distance on $D^*$, then the quotient $(D^*,d')$ can be equipped with an induced star structure, making it again a star sort.
  \item Let $D^*$ be a star sort and $E^* \subseteq D^*$ a definable subset.
    Then the distance $d(u,E^*)$ is sub-homogeneous if and only if $E^*$ is closed under multiplication by $\alpha \in [0,1]$, in which case $E^*$ is again a star sort.
  \end{itemize}
\end{exm}

Notice that $\varphi(u,y)$ is sub-homogeneous in $u$ if for every fixed parameter $b$, the formula $\varphi(u,b)$ (in $u$ alone) is sub-homogeneous.

For an alternate point of view, notice that a sub-homogeneous formula $\varphi(u,y)$ does not depend on $y$ when $u = 0$.
It can therefore be viewed as a formula $\varphi(u * y)$ in the sort $D^* * E$ (see \autoref{exm:GeneralisedCone}).
Since $\alpha (u * y) = (\alpha u) * y$, a sub-homogeneous (in $u$) formula $\varphi(u,y)$ is the same thing as a sub-homogeneous formula $\varphi(u * y)$ in a single variable from the sort $D^* * E$.

Similarly, a formula $\varphi(u,v)$ on $D^* \times E^*$ is jointly sub-homogeneous if and only if it is sub-homogeneous as a formula on the product star sort.

\begin{qst}
  We ordered the clauses of \autoref{exm:StarSortConstruction} in order to reflect the three operations by which we construct sorts in general.
  Still, something more probably needs to be said regarding the construction of sub-homogeneous pseudo-distance functions.
  In the usual context of plain sorts (and plain pseudo-distances), to every formula $\varphi(x,t)$ on $D \times E$ we can associate a formula on $D \times D$, defined by
  \begin{gather*}
    d_\varphi(x,y) = \sup_t \, |\varphi(x,t) - \varphi(y,t)|.
  \end{gather*}
  This is always a definable pseudo-distance on $D$.
  Moreover, in the case where $E = D$ and $\varphi$ already defines a pseudo-distance, $d_\varphi$ agrees with $\varphi$.

  Can something analogous be done in the present context as well?
\end{qst}

The following essentially asserts that we can retract continuously (with Lipschitz constant one, even) all formulas into sub-homogeneous ones.
The analogous result for a formula in several variables, with respect to joint sub-homogeneity in some of them, follows.

\begin{prp}
  \label{prp:SubHomogeneousFromArbitrary}
  Let $D^*$ be a star sort and $\varphi(u,y) \geq 0$ a positive formula on $D^* \times E$.
  For $k \in \bN$, define
  \begin{gather*}
    (\SH_k \varphi)(u,y) = \inf_{u',\alpha} \, \Bigl( \alpha \varphi(u',y) + k d(\alpha u',u)\Bigr), \qquad \text{where}\ u' \in D^*, \ \alpha \in [0,1].
  \end{gather*}
  \begin{enumerate}
  \item For any $\varphi \geq 0$ and $k$, the formula $(\SH_k \varphi)(u,y)$ is $k$-Lipschitz and sub-homogeneous in $u$, and $\SH_k \varphi \leq \varphi$.
  \item For any two formulas $\varphi, \psi \geq 0$ and $r \geq 0$, if $\varphi \leq \psi + r$, then $\SH_k \varphi \leq (\SH_k \psi) + r$.
    Consequently, $|(\SH_k \varphi) - (\SH_k \psi)| \leq |\varphi - \psi|$.
  \item If $\varphi$ is sub-homogeneous, then $(\SH_k \varphi) \rightarrow \varphi$ uniformly, at a rate that only depends on the bound and uniform continuity modulus of $\varphi$.
  \end{enumerate}
\end{prp}
\begin{proof}
  Clearly, $(\SH_k \varphi)(u,y)$ is $k$-Lipschitz in $u$.
  If $(\SH_k \varphi)(u,y) < r$ and $\beta \in [0,1]$, then there exist $u'$ and $\alpha$ such that $\alpha \varphi(u',y) + k d(\alpha u',u) < r$.
  Then $\alpha \beta \varphi(u',y) + k d(\alpha \beta u', \beta u) < \beta r$, showing that $(\SH_k \varphi)(\beta u) < \beta r$.
  This proves sub-homogeneity.
  We also always have $(\SH_k \varphi)(u,y) \leq 1 \cdot \varphi(u,y) + d(1 \cdot u, u) = \varphi(u,y)$.

  The second item is immediate.

  For the third item, we assume that $\varphi$ is sub-homogeneous, in which case
  \begin{gather*}
    (\SH_k \varphi)(u,y) = \inf_{u'} \, \Bigl( \varphi(u',y) + k d(u',u) \Bigr) \leq \varphi(u,y).
  \end{gather*}
  Say that $|\varphi| \leq M$ and $d(u,u') < \delta$ implies $|\varphi(u,y) - \varphi(u',y)| < \varepsilon$, and let $k > 2M / \delta$.
  If $d(u',u) \geq \delta$, then $\varphi(u',y) + k d(u',u) \geq \varphi(u)$, so such $u'$ may be ignored.
  Restricting to those where $d(u',u) < \delta$, we see that $(\SH_k \varphi) \geq \varphi - \varepsilon$.
\end{proof}

\begin{dfn}
  \label{dfn:WitnessNormalisedFormula}
  We say that a formula $\varphi(x,y)$ is \emph{witness-normalised} (in $x$, unless another variable is specified explicitly) if $\inf_y \varphi = 0$ (equivalently, if $\varphi \geq 0$ and $\sup_x \inf_y \varphi = 0$).

  More generally, for $\varepsilon > 0$, we say that $\varphi(x,y)$ is \emph{$\varepsilon$-witness-normalised} (in $x$) if $0 \leq \inf_y \varphi \leq \varepsilon$.
\end{dfn}

Witness-normalised formulas are analogous to formulas $\varphi(x,y)$ in classical logic for which $\exists y \varphi$ is valid: in either case, we require that witnesses exist.
If $\varphi(x,y)$ is any formula, then $\varphi(x,y) - \inf_z \varphi(x,z)$ is witness-normalised (we may say that it is \emph{syntactically} witness normalised), where we subtract a ``normalising'' term.

By definition, a sub-homogeneous or a witness-normalised formula is positive.
If $\varphi$ is witness-normalised in any of its arguments and $\varphi \geq \psi \geq 0$, then so is $\psi$.
This applies in particular to the formulas $\SH_k \varphi$ constructed in \autoref{prp:SubHomogeneousFromArbitrary}, assuming $\varphi$ is witness-normalised.

\begin{dfn}
  \label{dfn:StarCorrespondence}
  Let $D^*$ and $E^*$ be two star sorts.
  A \emph{star correspondence} between $D^*$ and $E^*$ is a formula $\varphi(u,v)$ on $D^* \times E^*$ that is sub-homogeneous in $(u,v)$ and witness-normalised in each of $u$ and $v$.

  Similarly, an \emph{$\varepsilon$-star correspondence} is a jointly sub-homogeneous formula that is $\varepsilon$-witness-normalised in each argument.
\end{dfn}

\begin{rmk}
  \label{rmk:EpsilonWitnessNormalised}
  If $\varphi$ is $\varepsilon$-witness-normalised (in one of its variables), then $\varphi' = \varphi \dotminus \varepsilon$ is witness-normalised (in the same), and $|\varphi - \varphi'| \leq \varepsilon$.
  If $\varphi$ is sub-homogeneous, then so is $\varphi \dotminus \varepsilon$,

  Therefore, if $\varphi$ is an $\varepsilon$-star correspondence, then $\varphi' = \varphi \dotminus \varepsilon$ is a star correspondence, and $|\varphi - \varphi'| \leq \varepsilon$.
\end{rmk}

Say that a definable map $\sigma\colon D \rightarrow E$ is \emph{densely surjective} if it is surjective in every sufficiently saturated model of the ambient theory, or equivalently, if $\sigma$ has dense image in every model.
Recall that a definable map $\sigma\colon D^* \rightarrow E^*$ between star sorts is \emph{homogeneous} if $\sigma(\alpha u) = \alpha \sigma(u)$.

Notice that a definable map $\sigma \colon D^* \rightarrow E^*$ is homogeneous if and only if the formula $d(\sigma u,v)$ is sub-homogeneous in $(u,v)$, and it is always witness-normalised in $u$.
If $\sigma$ is densely surjective, then it is homogeneous if and only if $d(\sigma u,v)$ is a star correspondence.
If $\sigma$ is bijective, then this is further equivalent to if $d(u, \sigma^{-1} v)$ being a star correspondence.

\begin{dfn}
  \label{dfn:UniversalStarSort}
  Say that a star sort $D^*$ is \emph{universal} (as a star sort) if for every star sort $E^*$, every star correspondence $\varphi$ between $D^*$ and $E^*$, and every $\varepsilon > 0$, there exists a $1/2$-star correspondence $\psi$ such that, in addition, if $\psi(u,v_i) < 1$ for $i = 0,1$, then $\varphi(u,v_i) < \varepsilon$ and $d(v_0,v_1) < \varepsilon$.
\end{dfn}

This just says that condition \autoref{item:UniversalStarSortMapsOne} of \autoref{prp:UniversalStarSortMaps}, which may be easier to parse, holds ``approximately''.
The choice of one and one half is quite arbitrary, and any two constants $0 < r_1 < r_2$ would do just as well (in the proof of \autoref{prp:UniversalStarSortMaps}\autoref{item:UniversalStarSortMapsZero} below, replace $2\psi \dotminus 1$ with $(\psi \dotminus r_1) / (r_2 - r_1)$).

\begin{prp}
  \label{prp:UniversalStarSortMaps}
  Let $D^*$ and $E^*$ be star sorts, $\varphi(u,v)$ a star correspondence on $D^* \times E^*$, and $\varepsilon > 0$.
  \begin{enumerate}
  \item
    \label{item:UniversalStarSortMapsZero}
    If $D^*$ is a universal star sort, then there exists $\psi$ as in \autoref{dfn:UniversalStarSort} that is a star correspondence (rather than a mere $\varepsilon$-star correspondence).
  \item
    \label{item:UniversalStarSortMapsOne}
    If $D^*$ is a universal star sort, then there exists a densely surjective homogeneous definable map $\sigma\colon D^* \rightarrow E^*$ such that $\varphi(u,\sigma u) \leq \varepsilon$.
  \item
    \label{item:UniversalStarSortMapsTwo}
    If both $D^*$ and $E^*$ are both universal star sorts, then the same can be achieved with $\sigma$ bijective.
  \end{enumerate}
\end{prp}
\begin{proof}
  For \autoref{item:UniversalStarSortMapsZero}, let $\psi$ be as in the conclusion of \autoref{dfn:UniversalStarSort}.
  Then $2\psi \dotminus 1$ will do.

  For \autoref{item:UniversalStarSortMapsOne}, define a sequence of formulas $\varphi_n(u,v)$ as follows.
  We start with $\varphi_0 = \varphi$, and we may assume that $0 < \varepsilon < 1$.
  Then, assuming that $\varphi_n$ is a star correspondence, we find a star correspondence $\varphi_{n+1}$ such that $\varphi_{n+1}(u,v_i) < 1$ implies $\varphi_n(u,v_i) \leq \varepsilon$ and $d(v_0,v_1) < \varepsilon/2^n$.
  Let $X_n \subseteq D^* \times E^*$ be the (type-definable) set defined by $\varphi_n \leq \varepsilon$ and $X = \bigcap X_n$.
  By hypothesis, for every $u \in D^*$ and $n$, there exists $v \in E^*$ such that $(u,v) \in X_n$.
  We also have $X_{n+1} \subseteq X_n$, so in a sufficiently saturated model there exists $v \in E^*$ such that $(u,v) \in X$.
  By the second hypothesis on $\varphi_n$, such $v$ is unique, so $X$ is the graph of a definable map $\sigma$ (and $v$ belongs to any model that contains $u$).
  By the same reasoning as above, for every $v \in E^*$ there exists $u \in D^*$ (not necessarily unique, so potentially only in a sufficiently saturated model) such that $(u,v) \in X$, so $\sigma$ is densely surjective.

  Assume now that $v = \sigma u$, i.e., $(u,v) \in X$.
  Since each $\varphi_n$ is sub-homogeneous, $(\alpha u,\alpha v) \in X$ for every $\alpha \in [0,1]$, i.e., $\alpha v = \sigma(\alpha u)$, and $\sigma$ is homogeneous.
  Finally, since $\varphi_0 = \varphi$, we have $(u,\sigma u) \in X \subseteq X_0$, so $\varphi(u, \sigma u) \leq \varepsilon$.

  For \autoref{item:UniversalStarSortMapsTwo} we use a back-and-forth version of the previous argument, with the roles of $D^*$ and $E^*$ reversed at odd steps.
\end{proof}

Notice that the zero formula is (trivially) a star correspondence on any two star sorts.
Therefore, if a universal star sort exists, then it is unique, up to a homogeneous definable bijection.

\begin{lem}
  \label{lem:UniversalStarSortLimit}
  Let $(D^*_n)$ be an inverse system of star sorts, where each $\pi_n\colon D^*_{n+1} \rightarrow D^*_n$ is surjective and homogeneous.
  \begin{enumerate}
  \item The inverse limit $D^* = \varprojlim D^*_n$ is a star sort, with the natural action $\alpha (u_n) = (\alpha u_n)$ and the distance proposed in \autoref{exm:StarSortConstruction}.
  \item A star correspondence between $D^*$ and $E^*$ that factors through $D^*_n \times E^*$ is the same thing as a star correspondence between $D^*_n$ and $E^*$.
  \item In order for $D^*$ to be a universal star sort, it is enough for it to satisfy the condition of \autoref{dfn:UniversalStarSort} for star-correspondences $\varphi$ that factor through $D^*_n \times E^*$ for some $n$.
  \end{enumerate}
\end{lem}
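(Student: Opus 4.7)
For part (1), the plan is to combine Lemma~\ref{lem:SortInverseLimit} with the observation that the star structure passes to the inverse limit. Homogeneity of each $\pi_n$ ensures that the coordinatewise action $\alpha(u_n) = (\alpha u_n)$ on $\prod D^*_n$ restricts to an action on $D^*$, and the resulting scalar multiplication $[0,1] \times D^* \to D^*$ is definable since it is so on each factor. After passing to bounded-diameter representatives of each $D^*_n$, the weighted-sum (or weighted-sup) distance of \autoref{exm:StarSortConstruction} on $\prod D^*_n$ restricts to a definable distance on $D^*$ by \autoref{lem:SortInverseLimit}, and sub-homogeneity $d(\alpha u,\alpha v) \leq \alpha d(u,v)$ together with $d(\alpha u,\beta u) = |\alpha-\beta|\|u\|$ transfer termwise from the factors.

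For part (2), write $\pi_{n,\infty}\colon D^* \to D^*_n$ for the canonical projection, which is homogeneous and densely surjective (it is surjective in sufficiently saturated models, by compactness applied to the defining formula of $D^*$ in \autoref{lem:SortInverseLimit}). Given a formula $\tilde\varphi$ on $D^*_n \times E^*$, its pullback $\varphi(u,v) = \tilde\varphi(\pi_{n,\infty}(u),v)$ is sub-homogeneous in $(u,v)$ if and only if $\tilde\varphi$ is sub-homogeneous in $(u_n,v)$, because the actions commute with $\pi_{n,\infty}$. Witness-normalization in $v$ is immediate, while witness-normalization in $u$ uses dense surjectivity to identify $\inf_{u \in D^*}\varphi(u,v)$ with $\inf_{u_n \in D^*_n}\tilde\varphi(u_n,v)$. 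The correspondence between the two families of star correspondences is then tautological.

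For part (3), the key input is that every formula on $D^* \times E^*$ is a uniform limit of formulas that factor through some $D^*_n \times E^*$; this follows from \autoref{lem:SortInverseLimit} (or, more directly, from the standard description of formulas on an inverse limit in terms of equicontinuous compatible families). Given a star correspondence $\varphi$ on $D^* \times E^*$ and $\varepsilon>0$, first approximate $\varphi$ within some small $\eta \ll \varepsilon$ by a formula $\tilde\varphi$ that factors through $D^*_n \times E^*$. Then repair the star-correspondence properties of $\tilde\varphi$ in two steps: apply the operator $\SH_k$ of \autoref{prp:SubHomogeneousFromArbitrary} (in the combined variable $u_n * v$, using the point of view noted after \autoref{exm:StarSortConstruction}) to make it jointly sub-homogeneous, controlling the loss by $\eta$ thanks to the second and third items of that proposition; and then apply the $\dotminus\eta$ trick of \autoref{rmk:EpsilonWitnessNormalised} to restore exact witness-normalization. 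This yields an honest star correspondence $\tilde\varphi'$ on $D^*_n \times E^*$ with $|\tilde\varphi' \circ \pi_{n,\infty} - \varphi| < C\eta$ for a fixed constant $C$. By hypothesis applied at level $n$, one obtains a $1/2$-star correspondence $\tilde\psi$ on $D^*_n \times E^*$ suitable for $\tilde\varphi'$ with error $\varepsilon - C\eta$; pulling back through $\pi_{n,\infty}$ via part (2) gives the desired $\psi$ for $\varphi$. The main obstacle is ensuring that the approximation step, joint sub-homogenization, and $\varepsilon$-normalization compose without destroying each other's good properties, which is exactly what \autoref{prp:SubHomogeneousFromArbitrary} and \autoref{rmk:EpsilonWitnessNormalised} are set up to allow.
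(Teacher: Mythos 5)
Your proof follows essentially the same route as the paper's: parts (1) and (2) are routine, and for part (3) both you and the paper approximate $\varphi$ by a formula factoring through some $D^*_n \times E^*$ and then repair joint sub-homogeneity using the operator $\SH_k$ of \autoref{prp:SubHomogeneousFromArbitrary}. The only minor divergence is how witness-normalization is restored: you use a two-sided approximation followed by the $\dotminus\eta$ trick of \autoref{rmk:EpsilonWitnessNormalised}, whereas the paper takes a one-sided approximation $\varphi \geq \varphi_1 \geq \varphi \dotminus \varepsilon$ and then sandwiches $\varphi \geq \SH_k\varphi \geq \SH_k\varphi_1 \geq \SH_k(\varphi \dotminus \varepsilon) \geq \varphi \dotminus 2\varepsilon$, so that $\SH_k\varphi_1$, being trapped between $0$ and $\varphi$, is automatically witness-normalized without any further repair.
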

\begin{proof}
  The first two assertions are fairly evident.
  In what follows, we are going to identify a formula $\varphi(u_n,v)$ on $D^*_n \times E^*$ with the formula $\varphi\bigl( \pi_n(u), v \bigr)$ on $D^* \times E^*$, which is essentially what the second point says.

  For the last one, say that $\varphi$ is a star correspondence between $D^*$ and $E^*$, and let $\varepsilon > 0$.
  For $n$ large enough we may find a formula $\varphi_1(u_n,v)$ on $D^*_n \times E^*$ such that $\varphi \geq \varphi_1 \geq \varphi \dotminus \varepsilon$ (with the identification proposed in the previous paragraph).
  Since $\varphi$ is jointly sub-homogeneous, so is $\varphi \dotminus \varepsilon$.
  Using the construction of \autoref{prp:SubHomogeneousFromArbitrary}, this implies that for large enough $k$ we have
  \begin{gather*}
    \varphi \geq \SH_k \varphi \geq \SH_k \varphi_1 \geq \SH_k (\varphi\dotminus\varepsilon) \geq \varphi \dotminus 2\varepsilon.
  \end{gather*}
  Since $\varphi' = \SH_k \varphi_1$ is jointly sub-homogeneous, it a star correspondence, and it factors through $D^*_n \times E^*$.
  Assume now that $\psi(u,v)$ exists, as per \autoref{dfn:UniversalStarSort}, for $\varphi'$ and $\varepsilon$.
  In particular, if $\psi(u,v) < 1$, then $\varphi'(u,v) < \varepsilon$, so $\varphi(u,v) < 3\varepsilon$, which is good enough.
\end{proof}

\section{Sorts with witnesses}
\label{sec:Witnesses}

In this section, we provide an explicit construction of a universal star sort.
We follow a path similar to the construction of $D_\Phi$ in \cite{BenYaacov:ReconstructionGroupoid}, seeking a sort that contains ``all witnesses''.

Let us consider first the case of a single formula $\varphi(x,y)$ on $D \times E$, which we assume to be witness-normalised (namely, such that $\inf_y \varphi = 0$, see \autoref{dfn:WitnessNormalisedFormula}).
The sort $D$ is viewed as the sort of \emph{parameters}, and $E$ is the sort of potential \emph{witnesses}.
One may then wish to consider the set of ``parameters with witnesses'', namely the collection of all pairs $(x,y)$ such that $\varphi(x,y) = 0$, but this may be problematic for several reasons.

First of all, in a fixed (non-saturated) structure, for all $a$ there exist $b$ such that $\varphi(a,b)$ is arbitrarily small, but not necessarily such that $\varphi(a,b) = 0$.
This can be overcome by allowing an error, e.g., by considering all the solution set of $\varphi(x,y) \leq \varepsilon$ for some $\varepsilon > 0$.
In fact, it is enough to consider the solution set of $\varphi(x,y) \leq 1$: if we want a smaller error, we need only replace $\varphi$ with $\varphi/\varepsilon$.

A second, and more serious issue, is that the resulting set(s) need not be definable.
That is to say that it may happen that $1 < \varphi(a,b) < 1+\varepsilon$ for arbitrarily small $\varepsilon > 0$ without there existing a pair $(a',b')$ close to $(a,b)$ such that $\varphi(a',b') \leq 1$.
We can solve this by allowing a \emph{variable error}, considering triplets $(r,x,y)$ where $r \in \bR$ and $\varphi(x,y) \leq r$.
Now, if $\varphi(x,y) < r + \varepsilon$, then the triplet $(r,x,y)$ is very close to $(r+\varepsilon,x,y)$, which does belong to our set.

This may seem too easy, and raises some new issues.
For example, if we allow errors greater than the bound for $\varphi$, then the condition $\varphi(x,y) \leq r$ becomes vacuous.
This is not, in fact, a real problem, since soon enough we are going to let $\varphi$ vary (or more precisely, consider an infinite family of formulas simultaneously), and any finite bound $r$ will be meaningful for \emph{some} of the formulas under consideration.
However, in order for the previous argument to work, $r$ cannot be bounded (we must always be able to replace it with $r + \varepsilon$).
By compactness, $r = +\infty$ must be allowed as well -- and now there is no way around the fact that $\varphi(x,y) \leq \infty$ is vacuous, regardless of $\varphi$.

We seem to be chasing our own tail, each time shovelling the difficulty underneath a different rug -- indeed, a complete solution is impossible, or else we could construct a universal Skolem sort, which was shown in \cite{BenYaacov:ReconstructionGroupoid} to be impossible in general.
What we propose here is a ``second best'': allow infinite error, but use the formalism of star sorts to identify all instances with infinite error as the distinguished root element.
Thus, at the root, all information regarding the (meaningless) witnesses will be lost, while every point outside the root will involve finite error, and therefore meaningful witnesses.
Since we want the root to be at zero, rather than at infinity, we replace $r \in [1,\infty]$ with $\alpha = 1/r \in [0,1]$.

Let $D^*$ be a star sort, $E$ a sort.
The set $D^* * E = \{ u * y : u \in D^*, \, y \in E\}$, as per \autoref{exm:GeneralisedCone}, is again a star sort, in which $0 * y = 0$ regardless of $y$.

\begin{lem}
  \label{lem:OneWitness}
  Let $D^*$ be a star sort, $E$ a sort, and let $\varphi(u,y)$ a formula on $D^* \times E$, witness-normalised and sub-homogeneous in $u$.
  Then
  \begin{gather*}
    D^*_\varphi
    = \bigl\{  u * y : u \in D^* \ \text{and} \ \varphi(u,y) \leq 1 \bigr\}
    \subseteq D^* * E
  \end{gather*}
  is again a star sort, and the natural projection map $D^*_\varphi \rightarrow D^*$, sending $u * y \mapsto u$, is surjective.
\end{lem}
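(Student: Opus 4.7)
The plan is to verify that $D^*_\varphi$, viewed as a subset of the ambient star sort $D^* * E$ (from \autoref{exm:GeneralisedCone}), satisfies the two criteria in the fourth clause of \autoref{exm:StarSortConstruction}: it must be closed under multiplication by $\alpha \in [0,1]$, and it must be a definable subset of $D^* * E$. These together automatically endow $D^*_\varphi$ with the structure of a star sort. Surjectivity of the projection will then come directly from witness-normalisation of $\varphi$.

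Closure under the star action is immediate: if $u * y \in D^*_\varphi$, so $\varphi(u,y) \leq 1$, then $\alpha (u * y) = (\alpha u) * y$, and sub-homogeneity of $\varphi$ in $u$ gives $\varphi(\alpha u, y) \leq \alpha \varphi(u,y) \leq 1$. Taking $\alpha = 0$, and noting that sub-homogeneity forces $\varphi(0,y) = 0$ for all $y$, the root of $D^* * E$ lies in $D^*_\varphi$.

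The main work is the definability step. Since $\varphi(u,y)$ is sub-homogeneous in $u$, the discussion preceding \autoref{prp:SubHomogeneousFromArbitrary} allows us to view $\varphi$ as a formula on $D^* * E$, and hence $\psi(u * y) = \varphi(u,y) \dotminus 1$ is a legitimate formula on $D^* * E$ whose zero set is exactly $D^*_\varphi$. To show $D^*_\varphi$ is a definable subset, I would verify that small $\psi$ forces small distance to $D^*_\varphi$. Given $u * y$ with $\varphi(u,y) \leq 1 + \varepsilon$, set $\alpha = 1/(1+\varepsilon) \in [0,1]$; sub-homogeneity gives $\varphi(\alpha u, y) \leq \alpha \varphi(u,y) \leq 1$, so $(\alpha u) * y \in D^*_\varphi$. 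Using the cone metric of \autoref{exm:GeneralisedCone} together with the identity $d(u, \alpha u) = (1-\alpha)\|u\|$ in $D^*$ and the standing convention $\|u\| \leq 1$, a direct computation yields $d\bigl(u * y, (\alpha u) * y\bigr) = (1-\alpha)\|u\| \leq \varepsilon/(1+\varepsilon) \leq \varepsilon$. Hence $\psi \leq \varepsilon$ implies distance to $D^*_\varphi$ at most $\varepsilon$, establishing definability.

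Finally, surjectivity of the projection $D^*_\varphi \twoheadrightarrow D^*$ is a one-line consequence of witness-normalisation: for $u \in D^*$ in a sufficiently saturated model, $\inf_y \varphi(u,y) = 0$ guarantees some $y \in E$ with $\varphi(u,y) \leq 1$, and then $u * y \in D^*_\varphi$ maps to $u$. I expect the definability estimate to be the one step that needs genuine care: the rescaling witness has to be produced at exactly the right rate using sub-homogeneity, and one must unpack the particular distance formula on $D^* * E$ to confirm that the two coordinates of the max both collapse to $(1-\alpha)\|u\|$ because the $E$-coordinate is held fixed.
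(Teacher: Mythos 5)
Your proposal is correct and follows essentially the same route as the paper's proof: view $\varphi$ as a formula on $D^* * E$, take the zero-set of $\varphi \dotminus 1$, and certify definability by slightly shrinking the $D^*$-coordinate (you use $\alpha = 1/(1+\varepsilon)$, the paper uses $1-\delta$, which is the same idea); closure under the $[0,1]$-action and surjectivity then follow as you say. Your spelling out of the distance estimate via the explicit cone metric is slightly more detailed than the paper's one-liner, but the argument is the same.
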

\begin{proof}
  We may view $\varphi$ as a formula on $D^* * E$, since, by sub-homogeneity, $\varphi(0,y) = 0$ regardless of $y$.
  The set $D^*_\varphi$ is the zero-set in $D^* * E$ of the formula $\varphi \dotminus 1$.
  Assume now that $a * b \in D^* * E$ and $\varphi(a,b) \dotminus 1 < \delta$.
  Then $(1-\delta) a * b \in D^*_\varphi$, and it is as close as desired (given $\delta$ small enough) to $a * b$.
  Therefore, $D^*_\varphi$ is definable.
  Since $\varphi$ is sub-homogeneous, $D^*_\varphi$ is closed under multiplication by $\alpha \in [0,1]$ and is therefore a star sort.
  Since $\varphi$ is witness-normalised, the projection is onto.
\end{proof}

Let us iterate this construction.
Recall from \autoref{rmk:GeneralisedConeIterated} that $(*D) * E = *(D \times E)$, identifying $(\alpha x) * y = \alpha(x,y)$.
Therefore, if $D^* \subseteq *D$ (with the induced star structure), then $D^* * E \subseteq *(D \times E)$.

\begin{dfn}
  \label{dfn:StarPhi}
  Fix a sort $D$, as well as a sequence of formulas $\Phi = (\varphi_n)$, where each $\varphi_n(x_{<n},y)$ is a witness-normalised formula on $D^n \times D$.
  Since $\Phi$ determines the sort $D$, we shall say that $\Phi$ is a sequence \emph{on} $D$.
  We then define
  \begin{gather*}
    D^*_n = \bigl\{ \alpha x_{<n} : \alpha \varphi_k(x_{<k},x_k) \leq 1 \ \text{for all} \ k < n \bigr\} \subseteq *(D^n), \\
    D^*_\Phi = \bigl\{ \alpha x : \alpha \varphi_n(x_{<n},x_n) \leq 1 \ \text{for all} \ n \bigr\} \subseteq *(D^\bN).
  \end{gather*}
\end{dfn}

In other words,
\begin{gather*}
  D^*_0 = [0,1] = *(\text{singleton}), \qquad
  D^*_{n+1} = (D^*_n)_{\varphi_n'}, \qquad
  D^*_\Phi = \varprojlim D^*_n,
\end{gather*}
where $\varphi_n'(\alpha x_{<n},y) = \alpha \varphi(x_{<n},y)$.
By \autoref{lem:OneWitness}, each $D^*_n$ is a star sort, and the natural projection $D^*_{n+1} \rightarrow D^*_n$ is onto.
By \autoref{lem:SortInverseLimit}, $D^*_\Phi = \varprojlim D^*_n$ is also a sort, and therefore a star sort by \autoref{lem:UniversalStarSortLimit}.

Notice that any formula in $D^*_n$ can be viewed, implicitly, as a formula in $D^*_k$ for any $k \geq n$, or even in $D^*_\Phi$, via the projections $D^*_k \twoheadrightarrow D^*_n$ or $D^*_\Phi \twoheadrightarrow D^*_n$ (this is, essentially, an addition of dummy variables).
In what follows, variables in $D^*_n$ will be denoted by $u_n$ or $\alpha x_{<n}$ (where $x_{<n} \in D^n$), and similarly, variables in $D^*_\Phi$ will be denoted by $u$ or $\alpha x$.

\begin{dfn}
  \label{dfn:RichSequence}
  We say that the sequence $\Phi$ on a sort $D$ is \emph{rich} if $D$ admits a definable projection onto any countable product of basic sorts, and for every witness-normalised formula $\varphi(x_{<n},y)$ in $D^n \times D$ and every $\varepsilon > 0$ there exist arbitrarily big $k \geq n$ such that $|\varphi_k(x_{<k},y) - \varphi(x_{<n},y)| < \varepsilon$ (so $\varphi$ is viewed as a formula in $x_{<k},y$ through the addition of dummy variables).
\end{dfn}

\begin{lem}
  \label{lem:RichSequenceExists}
  Under our standing hypothesis that the language is countable, with countably many basic sorts, there exists a rich sequence $\Phi$ (on an appropriate sort $D$).
  Moreover, we may construct $\Phi$ (and $D$) in a manner that only depends on the language and not on the theory of any specific structure.
\end{lem}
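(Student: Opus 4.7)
My plan is to take $D$ to be a single large product sort that covers every basic sort of $\cL$, and then to build the sequence $\Phi$ by enumerating a countable dense family of witness-normalised formulas and inserting each one at infinitely many positions of appropriate arity in $\Phi$.

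First I would fix an enumeration $(E_i)_{i \in \bN}$ of the basic sorts of $\cL$ in which every basic sort appears infinitely often, and set $D = \prod_{i \in \bN} E_i$. This is a sort directly from the countable-product construction, and any countable product of basic sorts is realised as a definable projection of $D$ onto a suitable subsequence of factors. This gives the first clause of richness and, crucially, $D$ is defined from $\cL$ alone.

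Next, using the countable dense family of connectives singled out in the introduction together with the countably many symbols of $\cL$, I would form the countable purely syntactic family $\mathcal{G}$ of restricted $\cL$-formulas in finitely many $D$-variables. By construction $\mathcal{G}$ is sup-norm dense (in every theory) among all $\cL$-formulas on the relevant products. To guarantee witness-normalisation independently of the theory, I replace each $\psi(x_{<n},y) \in \mathcal{G}$ by its syntactic witness-normalisation
\begin{gather*}
  \tilde\psi(x_{<n},y) = \psi(x_{<n},y) \dotminus \inf_z \psi(x_{<n},z),
\end{gather*}
which satisfies $\tilde\psi \geq 0$ and $\inf_y \tilde\psi = 0$ identically. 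Setting $\mathcal{F} = \{\tilde\psi : \psi \in \mathcal{G}\}$, if $\varphi$ is witness-normalised and $|\psi - \varphi| < \varepsilon$, then $|\inf_z \psi(x_{<n},z)| \leq \varepsilon$, so $|\tilde\psi - \varphi| \leq 2\varepsilon$; hence $\mathcal{F}$ is dense among witness-normalised formulas.

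To assemble $\Phi$, I enumerate the countable set of pairs $(\tilde\psi, N)$ with $\tilde\psi \in \mathcal{F}$ of arity $n(\tilde\psi)+1$ and $N \in \bN$ as $((\tilde\psi_j, N_j))_{j \in \bN}$, and pick inductively distinct indices $k_j \geq n(\tilde\psi_j) \vee N_j$, setting $\varphi_{k_j}$ equal to $\tilde\psi_j$ viewed on $D^{k_j} \times D$ via the addition of dummy parameter variables (which preserves witness-normalisation). I fill the remaining positions with $\varphi_k = 0$, which is trivially witness-normalised. Given any witness-normalised $\varphi$ on $D^n \times D$, any $\varepsilon > 0$, and any $N \geq n$, pick $\tilde\psi \in \mathcal{F}$ with $|\tilde\psi - \varphi| < \varepsilon$; then the pair $(\tilde\psi, N)$ was assigned some $k_j \geq N$ with $\varphi_{k_j} = \tilde\psi$, giving the density clause of richness.

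The main obstacle I anticipate is ensuring that the construction is genuinely language-only, given \autoref{rmk:DependenceOnTheory} warning that interpretable sorts and definable sets can depend non-trivially on the theory. Both the choice of $D$ as a product (rather than a definable subset) and the explicit syntactic witness-normalisation $\psi \mapsto \tilde\psi$ via $\dotminus$ and $\inf_z$ are designed to bypass this issue: the sort $D$, the family $\mathcal{F}$, and the bookkeeping that positions each $\tilde\psi_j$ in $\Phi$ all use only data intrinsic to $\cL$.
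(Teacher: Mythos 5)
Your proposal is correct and follows essentially the same route as the paper: it picks $D$ as a countable product covering every basic sort (the paper uses the product of all infinite countable powers, yours is an equivalent infinite-repetition enumeration), syntactically witness-normalises a countable dense family of formulas (your $\psi\dotminus\inf_z\psi$ agrees with the paper's $\psi-\inf_z\psi$ since $\psi\geq\inf_z\psi$), and interleaves them so each appears at arbitrarily large indices of the right arity. The observation that all of this is purely language-dependent is exactly the point the paper makes.
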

\begin{proof}
  For $D$ we may take the (countable) product of all infinite countable powers of the basic sorts.
  For each $k$ we may choose a countable dense family of formulas on $D^k \times D$, call them $\psi_{k,m}(x_{<k},y)$.
  Replacing them with $\chi_{k,m}(x_{<k},y) = \psi_{k,m}(x_{<k},y) - \inf_z \psi_{k,m}(x_{<k},z)$, we obtain a countable dense family of witness-normalised (in $x_{<k}$) formulas on $D^k \times D$.
  We may now construct a rich sequence $\Phi$ in which each $\chi_{k,m}$ occurs infinitely often (with additional dummy $x$ variables).
\end{proof}

Let $\Phi = (\varphi_n)$ (and $D$) be fixed, with $\Phi$ rich.
We define a formula on $D^n$ by
\begin{gather*}
  \rho_n(x_{<n}) = \frac{1}{1 \vee \bigvee_{k < n} \varphi_k(x_{<k},x_k)}.
\end{gather*}
In other words, $\rho_n(x_{<n})$ is the maximal $\alpha \in [0,1]$ such that $\alpha x_{<n} \in D^*_n$, or equivalently, such that $x_{<n}$ can be extended to $x$ with $\alpha x \in D^*_\Phi$.

\begin{lem}
  \label{lem:StarPhiCorrespondenceRho}
  Let $\Phi = (\varphi_n)$ be rich.
  Let $E^*$ be another star sort, $\psi(u_n,v)$ a star correspondence on $D^*_\Phi \times E^*$ that factors through $D^*_n \times E^*$, and $\varepsilon > 0$.
  Then $\psi$ factors through $D^*_k \times E^*$ for every $k \geq n$, and for every large enough $k$ the formula $\psi_1^k(x_{<k},v) = \psi\bigl(\rho_k(x_{<k}) x_{<n}, v \bigr)$ is $\varepsilon$-witness-normalised in either argument.
\end{lem}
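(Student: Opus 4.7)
The first assertion is immediate from \autoref{dfn:StarPhi}: the surjective homogeneous projection $D^*_k \twoheadrightarrow D^*_n$ exists for every $k \geq n$, so composing with it presents $\psi$ as a formula on $D^*_k \times E^*$. Since $\rho_k(x_{<k}) x_{<n}$ is precisely the image in $D^*_n$ of $\rho_k(x_{<k}) x_{<k} \in D^*_k$, the formula $\psi_1^k(x_{<k}, v)$ is well-defined on $D^k \times E^*$. For witness-normalisation of $\psi_1^k$ in $x_{<k}$, I would simply observe that it holds exactly, with no error: for any $x_{<k}$ the point $\rho_k(x_{<k}) x_{<n}$ lies in $D^*_n$, and since $\psi$ is witness-normalised in its first argument, $\inf_v \psi(\rho_k(x_{<k}) x_{<n}, v) = 0$.

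The substantive claim is $\varepsilon$-witness-normalisation in $v$. My plan is to reduce it, by compactness of the type space $\tS_{E^*}(T)$, to the density claim that $\bigcup_k A_k$ is dense in $D^*_n$, where $A_k = \{\rho_k(x_{<k}) x_{<n} : x_{<k} \in D^k\} \subseteq D^*_n$. Granting density, for every type $v$ over $E^*$ witness-normalisation of $\psi$ in $v$ yields $u^v \in D^*_n$ with $\psi(u^v, v) < \varepsilon/4$; continuity of $\psi$ and compactness let me cover $\tS_{E^*}(T)$ by finitely many opens $U_{v_1}, \ldots, U_{v_N}$ with $\psi(u^{v_i}, v) < \varepsilon/2$ for $v \in U_{v_i}$. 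Density and uniform continuity of $\psi$ then furnish indices $k_i$ and tuples $x_{<k_i}^{(i)}$ with $\psi(\rho_{k_i}(x_{<k_i}^{(i)}) x_{<n}, v) < \varepsilon$ for $v \in U_{v_i}$. Taking $k \geq \max_i k_i$ and extending each $x_{<k_i}^{(i)}$ to $x_{<k}^{(i)} \in D^k$ by approximate witnesses of the subsequent $\varphi_j$ (which is legitimate since each $\varphi_j$ is witness-normalised) leaves $\rho_k(x_{<k}^{(i)})$ essentially unchanged, delivering the required uniform bound $\psi_1^k(x_{<k}^{(i)}, v) < \varepsilon$ for $v \in U_{v_i}$.

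For the density claim, fix a target $\alpha x_{<n} \in D^*_n$. When $\alpha = \rho_n(x_{<n})$, extending $x_{<n}$ by witnesses of the subsequent $\varphi_j$ keeps $\rho_k$ at $\alpha$ exactly. When $0 < \alpha < \rho_n(x_{<n})$, I would force $\rho_k$ downward by arranging some $j^* \geq n$ at which $\varphi_{j^*}(x_{<j^*}, x_{j^*}) \approx 1/\alpha$, filling the other coordinates with $\varphi_j$-witnesses. To arrange such a $j^*$, richness of $\Phi$ lets me approximate the witness-normalised formula $\chi(y) = (1/\alpha) \cdot \pi(y)$, where $\pi\colon D \to \{0,1\}$ is the projection guaranteed by \autoref{dfn:RichSequence}, and then pick $x_{j^*}$ with $\pi(x_{j^*}) = 1$. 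The conic distance $d(\beta x_{<n}, \alpha x_{<n}) = |\beta - \alpha|$ (see \autoref{exm:Cone}) turns $\rho_k(x_{<k}) \approx \alpha$ into $\rho_k(x_{<k}) x_{<n} \approx \alpha x_{<n}$. The case $\alpha = 0$ is analogous, using richness to produce $\varphi_{j^*}$ of arbitrarily large norm so that $\rho_k$ can be made arbitrarily small.

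The main obstacle I anticipate is that the auxiliary formula $\chi$ used to drive $\rho_k$ to the specific value $\alpha$ depends on $\alpha$, and its norm $1/\alpha$ blows up as $\alpha \to 0$; this prevents proving density uniformly in the target $\alpha x_{<n}$. However, the final compactness covering of $\tS_{E^*}(T)$ reduces the problem to only finitely many targets at once, so pointwise density — which is all richness directly yields — suffices to extract a uniform $k$.
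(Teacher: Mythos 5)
Your treatment of witness-normalisation in $x_{<k}$ is correct and matches the paper: since $\rho_k(x_{<k}) \leq \rho_n(x_{<n})$, the point $\rho_k(x_{<k}) x_{<n}$ always lies in $D^*_n$, and witness-normalisation of $\psi$ in its first argument transfers verbatim.

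The substantive part has a genuine gap, and you correctly put your finger on it yourself in the final paragraph: the auxiliary formula $\chi(y) = (1/\alpha)\pi(y)$ has $\alpha$ built into the formula, so the position in the sequence $\Phi$ at which richness gives an approximation (and hence the required $k$) depends on $\alpha$, which is not bounded away from zero as $v$ varies. The proposed fix via compactness of $\tS_{E^*}(T)$ does not repair this. The sets $U_{v_i} = \{v : \psi(u^{v_i},v) < \varepsilon/2\}$ are not subsets of $\tS_{E^*}(T)$: they are cut out by the formula $\psi(u^{v_i},-)$ with parameter $u^{v_i}$, which lives in some model, and the truth value of $\psi(u^{v_i},v) < \varepsilon/2$ is a function of the \emph{joint} type of $(u^{v_i},v)$, not of $\tp(u^{v_i})$ and $\tp(v)$ separately. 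Distinct realisations of the same $v$-type over $\emptyset$ can land inside and outside $U_{v_i}$, so the cover is not a cover of $\tS_{E^*}(T)$, and the extraction of finitely many $\alpha_i$'s does not go through. The compactness of $\tS_{E^*}(T)$ is simply the wrong compactness for this purpose.

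The paper resolves the obstacle without any compactness over $v$, by making the auxiliary formula independent of $\alpha$ and moving the $\alpha$-dependence into the choice of witness. Fix a single surjective formula $\chi\colon D \to [0,1]$ and a constant $C$, and use richness once to find $m \geq n$ with $C\chi(y) \geq \varphi_m(x_{<m},y) \geq C\chi(y) - 1/C$. Then for each $v$ and its witness $\alpha x_{<n}$, one chooses the coordinate $x_m$ so that $\chi(x_m) = (\alpha C \vee 1)^{-1}$. When $\alpha \geq 1/C$ this forces $\varphi_m(x_{<m},x_m) \approx 1/\alpha$ and hence $\rho_{m+1}(x_{\leq m}) \approx \alpha$; when $\alpha < 1/C$ it forces $\rho_{m+1}(x_{\leq m}) \lesssim 1/C$, and since $\alpha < 1/C$ too the distance $|\rho_{m+1}-\alpha| \lesssim 2/C$ is still small, even though $\rho_{m+1}$ does not track $\alpha$. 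The single index $m$ (and any $k > m$) thus works uniformly in $v$, precisely because the formula never needs to attain values larger than $C$: near $\alpha = 0$ one accepts a crude approximation, which is harmless because the cone distance $d(\rho x_{<n}, \alpha x_{<n}) = |\rho - \alpha|$ is small whenever both cone parameters are small, regardless of whether they are equal. This is the idea missing from your proposal.
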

\begin{proof}
  If $k \geq n$, then $\rho_k(x_{<k}) \leq \rho_n(x_{<n})$, so $\rho_k(x_{<k}) x_{<n} \in D^*_n$.
  Since $\psi(u_n,v)$ is witness-normalised in $u_n$, $\psi_1^k(x_{<k},v)$ is witness-normalised in $x_{<k}$.
  It is left to show that for $k$ large enough, it is also $\varepsilon$-witness-normalised in $v$.

  Our hypothesis regarding $D$ implies, among other things, that there exists a surjective definable map $\chi\colon D \rightarrow [0,1]$ (namely, a surjective formula).
  Therefore, for a constant $C$ that we shall choose later, there exists $m \geq n$ such that $C \chi(y) \geq \varphi_m(x_{<m},y) \geq C \chi(y) - 1/C$.

  Assume that $k > m$.
  For every possible value of $v \in E^*$, which we consider as fixed, there exists $\alpha x_{<n} \in D^*_n$ such that $\psi(\alpha x_{<n},v) < \varepsilon$.
  We can always extend $x_{<n}$ to $x_{<m}$ in such a manner that $\rho_m(x_{<m}) = \rho_n(x_{<n}) \geq \alpha$, so $\alpha x_{<m} \in D^*_m$.
  We choose $x_m$ so $\chi(x_m) = (\alpha C \vee 1)^{-1}$, and extend $x_{\leq m}$ to $x_{<k}$ so $\rho_k(x_{<k}) = \rho_{m+1}(x_{\leq m})$.

  If $\alpha C \geq 1$, then $1/\alpha \geq \varphi_m(x_{<m},x_m) \geq 1/\alpha - 1/C$, so $\alpha \leq \rho_{m+1}(x_{\leq m}) \leq \alpha (1-\alpha/C)^{-1}$.
  Having chosen $C$ large enough, $\rho_k(x_{<k}) = \rho_{m+1}(x_{\leq m})$ is as close to $\alpha$ as desired.
  If $\alpha C < 1$, then $0 \leq \alpha \leq 1/C$ and $0 < \rho_{k+1}(x_{\leq k}) \leq 1/(C - 1/C)$, so the same conclusion holds.

  Either way, having chosen $C$ large enough, $\psi_1^k(x_{<k},v)$ is as close as desired to $\psi(\alpha x_{<n},v)$, and in particular $\psi_1^k(x_{<k},v) < 2\varepsilon$, which is good enough.
\end{proof}

Given our hypothesis regarding $D$, every sort can be expressed as a definable subset of a quotient of $D$ by a pseudo-distance.
Such a quotient will be denoted $(D,\overline{d})$ (which includes an implicit step of identifying points at $\overline{d}$-distance zero).

\begin{conv}
  \label{conv:StarPhiEStar}
  From this point, and through the proof of \autoref{lem:StarPhiCorrespondenceUniversal}, we fix a star sort $E^*$.
  By the preceding remark, we may assume that $(E^*,d_{E^*}) \subseteq (D,\overline{d})$ isometrically, where $\overline{d}$ is a definable pseudo-distance on $D$ which we also fix.
  In particular, the distance on $E^*$ will also be denoted by $\overline{d}$.
  If $y \in D$, we denote its image in the quotient $(D,\overline{d})$ by $\overline{y}$.
\end{conv}

It is worthwhile to point out that if $\alpha x \in D^*_\Phi$, then for every $k \in \bN$ and $\delta > 0$,
\begin{gather}
  \label{eq:DPhiInequality}
  (\alpha \delta/2) \Bigl(\varphi_k(x_{<k},x_k) + 1 \Bigr)
  =
  (\delta/2) \Bigl(\alpha \varphi_k(x_{<k},x_k) + \alpha \Bigr)
  \leq \delta.
\end{gather}

Given $n \leq k$ and $\delta > 0$, let us define for $\alpha x \in D^*_\Phi$, $v \in E^*$ and $y \in D$:
\begin{gather*}
  \chi^n(\alpha x,y,v) = \inf_{w \in E^*} \, \Bigl[ \overline{d}\bigl( \alpha \rho_n(x_{<n})^{-1} w, v \bigr) + \alpha \overline{d}(\overline{y},w) \Bigr],
  \\
  \chi^{n,k}(\alpha x,v) = \chi^n(\alpha x, x_k,v) =  \inf_{w \in E^*} \, \Bigl[ \overline{d}\bigl( \alpha \rho_n(x_{<n})^{-1} w, v \bigr) + \alpha \overline{d}(\overline{x_k},w) \Bigr].
\end{gather*}
Let us explain this.
First of all, since $\alpha x \in D^*_\Phi$, we must have $\alpha \leq  \rho_n(x_{<n})$, so the expression $\alpha \rho_n(x_{<n})^{-1} w$ makes sense.
Also, if $\alpha = 0$, then $\chi^n(\alpha x,y,v) = \|v\|$ does not depend on $x$, so this is well defined.

Now, let $y \in D$ (possibly, $y = x_k$ for some $k \geq n$, but this will happen later).
We want $v$ to be equal to $\alpha \rho_n(x_{<n})^{-1} \overline{y}$, and in particular, we want $\overline{y}$ to belong to $E^*$.
We may not multiply by $\alpha \rho_n(x_{<n})^{-1}$ outside $E^*$, but we may quantify over $E^*$.
Therefore, we ask for $\overline{y}$ to be very close to some $w \in E^*$, and for $\alpha \rho_n(x_{<n})^{-1} w$, which always makes sense, to be close to $v$.

\begin{lem}
  \label{lem:StarPhiCorrespondenceChi}
  The formula $\chi^{n,k}(u,v)$ has the following properties:
  \begin{enumerate}
  \item
    \label{item:StarPhiCorrespondenceChiSubHomogeneous}
    It is jointly sub-homogeneous in its arguments.
  \item
    \label{item:StarPhiCorrespondenceChiFunctional}
    For every $n,\varepsilon > 0$ there exists $\delta = \delta(n,\varepsilon) > 0$ such that, if $\chi^n(u,y,v_i) \leq \delta$ for $i = 0,1$, then $\overline{d}(v_0,v_1) < \varepsilon$.
    In particular, for any $k$, if $\chi^{n,k}(u,v_i) \leq \delta$ for $i = 0,1$, then $\overline{d}(v_0,v_1) < \varepsilon$.
  \item
    \label{item:StarPhiCorrespondenceChiWitnessNormalised}
    Assuming that $\varphi_k(x_{<k},y) \geq 2 \overline{d}(\overline{y},E^*) / \delta - 1$, the formula $\chi^{n,k}(u,v)$ is $\delta$-witness-normalised in $u$.
  \end{enumerate}
\end{lem}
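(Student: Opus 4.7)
The plan is to address the three items separately; they are largely independent, though (3) is the most delicate. For (1), I observe that rescaling $u = \alpha x$ by $\gamma \in [0,1]$ replaces $\alpha$ by $\gamma \alpha$ while leaving $\rho_n(x_{<n})$ and $\overline{x_k}$ fixed. The metric star-space axiom $\overline{d}(\gamma a, \gamma b) \leq \gamma \overline{d}(a,b)$ factors $\gamma$ out of the first summand of the integrand defining $\chi^{n,k}(\gamma u, \gamma v)$, while the second summand gains a factor of $\gamma$ directly from $\gamma \alpha$; so $\chi^{n,k}(\gamma u, \gamma v) \leq \gamma \chi^{n,k}(u, v)$, and non-negativity is automatic.

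For (2), the key uniformity is that $\rho_n(x_{<n}) \geq r_n := 1/\max(1, M_0, \ldots, M_{n-1}) > 0$, where $M_j$ is a bound for the (automatically bounded) formula $\varphi_j$; hence $r_n$ depends only on $n$. Writing $\beta = \alpha/\rho_n$ and picking $w_i \in E^*$ that nearly attain the infimum in $\chi^n(u, y, v_i) \leq \delta$ up to slack $\eta > 0$, I apply the triangle inequality to get $\overline{d}(v_0, v_1) \leq 2(\delta + \eta) + \beta \overline{d}(w_0, w_1)$, then control the last term via $\beta \overline{d}(w_i, \overline{y}) = (1/\rho_n) \cdot \alpha \overline{d}(w_i, \overline{y}) \leq (\delta + \eta)/r_n$. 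This yields $\overline{d}(v_0, v_1) \leq 2(\delta + \eta)(1 + 1/r_n)$; sending $\eta \to 0$ and setting $\delta(n, \varepsilon) := \varepsilon/[4(1 + 1/r_n)]$ suffices. The degenerate case $\alpha = 0$ reduces to $\chi^n(0, y, v_i) = \|v_i\| \leq \delta$, and the triangle inequality through the root gives $\overline{d}(v_0, v_1) \leq 2\delta$.

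For (3), I combine the $D^*_\Phi$-constraint $\alpha \varphi_k(x_{<k}, x_k) \leq 1$ (so $\varphi_k \leq 1/\alpha$) with the hypothesis $\varphi_k \geq 2\overline{d}(\overline{x_k}, E^*)/\delta - 1$ to obtain $\alpha \overline{d}(\overline{x_k}, E^*) \leq (\delta/2)(1 + \alpha) \leq \delta$, the last inequality using $\alpha \leq 1$. I then pick $w^* \in E^*$ with $\overline{d}(\overline{x_k}, w^*) \leq \overline{d}(\overline{x_k}, E^*) + \eta$ and set $v := \beta w^* \in E^*$ (valid since $\beta \leq 1$ and $E^*$ is closed under $[0,1]$-multiplication); the first summand of the integrand defining $\chi^{n,k}(u, v)$ then vanishes, while the second is bounded by $\alpha \overline{d}(\overline{x_k}, E^*) + \alpha \eta \leq \delta + \eta$. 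Letting $\eta \to 0$ gives $\inf_v \chi^{n,k}(u, v) \leq \delta$, and the case $\alpha = 0$ is trivial via $v := 0$. The main obstacle is exactly this algebra: the particular lower bound $2\overline{d}(\overline{y}, E^*)/\delta - 1$ assumed in the hypothesis is calibrated so that $(\delta/2)(1 + \alpha) \leq \delta$ holds under the star-sort constraint $\alpha \leq 1$, which is what makes $\delta$-witness-normalisation achievable at exactly the advertised level.
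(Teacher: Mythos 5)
Your proofs of items (1) and (3) match the paper's essentially step for step: (1) factors $\gamma$ out using sub-homogeneity of $\overline{d}$ on $E^*$ and the fact that $\rho_n(x_{<n})$ is unchanged by scaling; (3) uses the $D^*_\Phi$-constraint $\alpha\varphi_k(x_{<k},x_k)\leq 1$ together with the assumed lower bound on $\varphi_k$ to get $\alpha\overline{d}(\overline{x_k},E^*)\leq\delta$, then picks an approximating witness $w^*\in E^*$ and sets $v=\alpha\rho_n(x_{<n})^{-1}w^*$. Your calibration remark is exactly why the factor $2/\delta$ is there. For item (2), however, you take a genuinely different route. The paper argues qualitatively: it shows that $\chi^n(u,y,v_0)=\chi^n(u,y,v_1)=0$ forces $v_0=v_1$ (splitting into $\alpha=0$ and $\alpha>0$), and then invokes compactness of the type space to promote this to the $\delta$--$\varepsilon$ statement, without producing an explicit $\delta$. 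You instead produce $\delta(n,\varepsilon)$ explicitly by observing the lower bound $\rho_n(x_{<n})\geq r_n>0$ (which depends only on $n$, via the bounds on $\varphi_0,\dots,\varphi_{n-1}$), letting you control $\beta=\alpha/\rho_n$ and run a direct triangle-inequality estimate. Both are correct; your version is more constructive and makes the uniformity in $k$ transparent, at the cost of tracking the constant $r_n$, while the paper's compactness argument is shorter and does not need the explicit bound on $\rho_n$.
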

\begin{proof}
  Item \autoref{item:StarPhiCorrespondenceChiSubHomogeneous} is immediate (among other things, we use the fact that $\overline{d}$ is sub-homogeneous on $E^*$).

  For \autoref{item:StarPhiCorrespondenceChiFunctional}, assume that $\chi^n(\alpha x,y,v_i) = 0$.
  Then either $\alpha = 0$, in which case $v_i = 0$, or $\alpha > 0$, in which case we have $\overline{y} \in E^*$ and $v_i = \alpha \rho_n(x_{<n})^{-1} \overline{y}$.
  Either way, $v_0 = v_1$, and in particular $\overline{d}(v_0,v_1) < \varepsilon$.
  The conclusion follows by compactness.

  For \autoref{item:StarPhiCorrespondenceChiWitnessNormalised}, let $u = \alpha x \in D^*_\Phi$.
  By \autoref{eq:DPhiInequality} we have $\alpha \overline{d}(\overline{x_k},E^*) \leq \delta$.
  Choose $w \in E^*$ such that $\alpha \overline{d}(\overline{x_k},w) \leq \delta$, and let $v = \alpha \rho_n(x_{<n})^{-1} w$.
  Then $\chi^{n,k}(u,v) \leq \delta$.
\end{proof}

\begin{lem}
  \label{lem:StarPhiCorrespondenceUniversal}
  Let $\Phi = (\varphi_n)$ be rich.
  Let $E^* \subseteq (D,\overline{d})$ be a star sort, as per \autoref{conv:StarPhiEStar}, $\psi(u,v)$ a star correspondence on $D^*_\Phi \times E^*$, and $\varepsilon > 0$.
  Then there exist $n \leq k$ and $\delta > 0$ such that $\chi^{n,k}(u,v)$ is a $\delta$-star correspondence between $D^*_\Phi$ and $E^*$, and in addition, if $\chi^{n,k}(u,v_i) \leq 2\delta$ for $i = 0,1$, then $\psi(u,v_i) \leq \varepsilon$ and $\overline{d}(v_0,v_i) < \varepsilon$.
\end{lem}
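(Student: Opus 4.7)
The plan is to exploit the richness of $\Phi$ to build directly into the definition of $D^*_\Phi$ a constraint that forces $\psi(u, v)$ to be small whenever $\chi^{n,k}(u, v)$ is, by letting the coordinate $x_k$ serve as a lift into $D$ of the value $v$ encoded by $u$.

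First, using \autoref{lem:UniversalStarSortLimit} together with \autoref{prp:SubHomogeneousFromArbitrary} and \autoref{rmk:EpsilonWitnessNormalised} to restore joint sub-homogeneity and witness-normalization after truncation, I would replace $\psi$ by a nearby star correspondence $\psi'$ factoring through $D^*_n \times E^*$ for some $n$, with $|\psi - \psi'| < \varepsilon/8$. Using \autoref{conv:StarPhiEStar}, I would then extend $\psi'(u_n, v)$ to a formula $\tilde{\psi}(u_n, \overline{y}) = \inf_{w \in E^*}\bigl[ \psi'(u_n, w) + L\,\overline{d}(\overline{y}, w) \bigr]$ on $D^*_n \times (D, \overline{d})$, where $L$ exceeds the Lipschitz constant of $\psi'$ in its second variable. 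Then, invoking richness of $\Phi$, I would pick some $k \geq n$ and some $\varphi_k$ in $\Phi$ approximating (after subtracting its infimum, to restore witness-normalization) the formula
\begin{gather*}
  \Phi^*(x_{<k}, y) = \frac{2\,\overline{d}(\overline{y}, E^*)}{\delta} + \frac{4}{\varepsilon}\, \tilde{\psi}\bigl( \rho_k(x_{<k})\, x_{<n},\, \overline{y}\bigr).
\end{gather*}
By \autoref{lem:StarPhiCorrespondenceRho} applied to $\psi'$, for large enough $k$ the infimum over $y$ of the second summand (as $\overline{y}$ ranges in $E^*$) is $O(\varepsilon)$, so the normalising adjustment is bounded and does not spoil the first condition below.

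To verify the first conclusion: joint sub-homogeneity of $\chi^{n,k}$ is part (i) of \autoref{lem:StarPhiCorrespondenceChi}; the bound $\varphi_k \gtrsim 2\,\overline{d}(\overline{y}, E^*)/\delta - 1$ (contributed by the first summand of $\Phi^*$) combined with part (iii) of the same lemma gives $\delta$-witness-normalization in $u$; and $\delta$-witness-normalization in $v$ follows by a direct construction where, given $v \in E^*$, one chooses $x_{<n}$ optimally for $\psi'$, lifts $v$ to some $y \in D$ to serve as $x_k$, and fills in the remaining coordinates by Skolem witnesses for the $\varphi_m$, so that $\rho_n(x_{<n}) \cdot x$ lies in $D^*_\Phi$ and $\chi^{n,k}$ vanishes approximately at $v$. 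For the second conclusion, suppose $\chi^{n,k}(u, v) \leq 2\delta$: unpacking the infimum produces $w \in E^*$ with $v \approx \alpha \rho_n(x_{<n})^{-1} w$ and $\alpha\,\overline{d}(\overline{x_k}, w) \leq 2\delta$; the $\overline{d}(v_0, v_1) < \varepsilon$ bound is then immediate from part (ii) of \autoref{lem:StarPhiCorrespondenceChi}, while the constraint $\alpha \varphi_k(x_{<k}, x_k) \leq 1$ from $\alpha x \in D^*_\Phi$ forces $\alpha\, \tilde{\psi}\bigl(\rho_k(x_{<k}) x_{<n}, \overline{x_k}\bigr) \leq \varepsilon/4$, which transfers via Lipschitz continuity of $\tilde{\psi}$ in $\overline{y}$ and the joint sub-homogeneity bound $\psi'(\alpha x_{<n}, \beta w) \leq \beta\, \psi'(\rho_n(x_{<n}) x_{<n}, w)$ with $\beta = \alpha / \rho_n(x_{<n})$ to the desired $\psi(u, v) \leq \varepsilon$, provided $\delta$ is chosen small enough relative to $\varepsilon$ and $L$.

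The main obstacle I anticipate is reconciling $\tilde{\psi}(\rho_k(x_{<k}) x_{<n}, \overline{x_k})$, whose control is built into the very definition of $\varphi_k$, with the target $\psi'(\alpha x_{<n}, v)$: the two expressions differ in both arguments, and the scales $\alpha \leq \rho_k(x_{<k}) \leq \rho_n(x_{<n})$ are only comparable through sub-homogeneity in one direction, so careful bookkeeping is required to route the bound through $\beta \cdot \rho_n(x_{<n}) x_{<n}$ and through the Lipschitz replacement of $\overline{x_k}$ by $w \in E^*$. A separate subtle case is the regime where $\alpha$ is very small, in which $\chi^{n,k}(u, v) \leq 2\delta$ already forces $\|v\|$ itself to be small, and $\psi(u, v)$ is controlled by uniform continuity together with $\psi(0, 0) = 0$ (the latter following from joint sub-homogeneity with scaling factor $0$).
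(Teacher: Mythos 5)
Your overall strategy is the same as the paper's: replace $\psi$ by a nearby star correspondence $\psi'$ factoring through $D^*_n \times E^*$ via \autoref{lem:UniversalStarSortLimit} and \autoref{lem:StarPhiCorrespondenceRho}, extend it across $(D,\overline{d})$, use richness to plant a suitable $\varphi_k$ inside $\Phi$, and then read off the properties of $\chi^{n,k}$ from \autoref{lem:StarPhiCorrespondenceChi} and from the constraint $\alpha\varphi_k(x_{<k},x_k)\leq 1$. The paper folds the two roles (dominating $\overline{d}(\overline{y},E^*)$ and encoding $\psi'$) into a single extension $\psi_2\geq\overline{d}(\overline{y},E^*)$ and sets $\varphi_k\approx 2\psi_2/\delta$, whereas you take a sum of two summands; this is only a cosmetic difference, and the constant-bookkeeping issues with the normalising term $c\leq 4$ you create are recoverable at the cost of a worse multiple of $\delta$.

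However, there is a genuine gap in your verification of the implication $\chi^{n,k}(u,v_i)\leq 2\delta\implies\psi(u,v_i)\leq\varepsilon$. Your chain of estimates (Lipschitz transfer from $\overline{x_k}$ to $w\in E^*$, then joint sub-homogeneity with $\beta=\alpha/\rho_n(x_{<n})$) produces a bound that contains a factor $1/\rho_n(x_{<n})$: the constraint built into $D^*_\Phi$ only gives $\alpha\tilde\psi(\rho_n(x_{<n})x_{<n},\overline{x_k})\leq O(\varepsilon)$, which after dividing by $\alpha$, transferring to $w$, and multiplying by $\beta$, yields roughly $(O(\varepsilon)+2L\delta)/\rho_n(x_{<n})$. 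There is no single $\delta>0$ that makes this $\leq\varepsilon$ uniformly over $u$, since $\rho_n(x_{<n})$ can be arbitrarily small. Your proposed escape for small $\alpha$ — that $\chi^{n,k}(u,v)\leq 2\delta$ already forces $\|v\|$ small — is false precisely in the regime $\alpha=\rho_n(x_{<n})$ small: there $\chi^{n,k}(\alpha x,v)=\inf_w\bigl[\overline{d}(w,v)+\alpha\overline{d}(\overline{x_k},w)\bigr]\leq 2\alpha$ (take $w=v$), which is $\leq 2\delta$ for any $v$ with $\|v\|\leq 1$ once $\alpha\leq\delta$. So both your direct estimate and your fallback fail simultaneously on the same configurations. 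The paper avoids this entirely with a compactness argument: one first shows that the exact conditions $\chi^n(\alpha x,y,v)=0$ and $\alpha\psi_2(x_{<n},y)=0$ imply $\psi(\alpha x,v)\leq\varepsilon$ (a clean case split on $\alpha=0$ vs.\ $\alpha>0$, where sub-homogeneity is applied with no remainder term), and then appeals to compactness of the type space to convert the exact implication into an approximate one with a uniform $\delta$. That compactness step is the essential missing ingredient in your proposal.

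Secondarily, the formula $\Phi^*(x_{<k},y)$ you propose is circular: it involves $\rho_k(x_{<k})$, but $k$ is the index you are trying to choose via richness so that $\varphi_k\approx\Phi^*$. You should use $\rho_n(x_{<n})$ (as the paper does implicitly, through $\psi_1(x_{<n},v)=\psi\bigl(\rho_n(x_{<n})x_{<n},v\bigr)$), so that $\Phi^*$ is a bona fide formula on $D^n\times D$ before $k$ is chosen. This is a fixable slip, but as written the invocation of richness does not parse.
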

\begin{proof}
  By \autoref{lem:UniversalStarSortLimit} and \autoref{lem:StarPhiCorrespondenceRho}, for some $n$ (in fact, any $n$ large enough), we may assume that $\psi$ is a star correspondence that factors as $\psi(u_n,v)$ through $D^*_n \times E^*$, and that $\psi_1(x_{<n},v) = \psi\bigl( \rho_n(x_{<n}) x_{<n}, v \bigr)$ is $\varepsilon$-witness-normalised in either argument.
  In particular, $\psi_1 \dotminus \varepsilon$ is witness-normalised.

  We may extend $\psi_1 \dotminus \varepsilon$ to $D^n \times (D,\overline{d})$, obtaining a formula $\psi_2(x_{<n},y)$ on $D^n \times D$, which is uniformly $\overline{d}$-continuous in $y$.
  Since $\psi_1 \geq 0$, we may assume that $\psi_2 \geq 0$, and even that
  \begin{gather*}
    \psi_2(x_{<n},y) \geq \overline{d}(\overline{y},E^*).
  \end{gather*}

  Let us choose $\delta > 0$ small enough, based on choices made so far.
  Since $\psi_2(x_{<n},y)$ is witness-normalised in $x_{<n}$ (choosing witnesses $\overline{y} \in E^*$), there exists $k \geq n$ such that $|\varphi_k - 2 \psi_2/\delta| \leq 1$.
  By \autoref{lem:StarPhiCorrespondenceChi}, having chosen $\delta$ small enough, the formula $\chi^{n,k}(u,v)$ is jointly sub-homogeneous, $\delta$-witness-normalised in $u$, and $\chi^{n,k}(u,v_i) \leq 2 \delta$ implies $\overline{d}(v_0,v_i) < \varepsilon$.
  There are two more properties we need to check.

  First, we need to check that $\chi^{n,k}(u,v)$ is $\delta$-witness-normalised in $v$.
  Indeed, given $v = \overline{y} \in E^*$, we know that there exists a sequence $x_{<n} \in D^n$ such that $\psi_1(x_{<n},v) \dotminus \varepsilon = 0$.
  Let $\alpha = \rho_n(x_{<n})$, so $\alpha x_{<n} \in D^*_n$, and extend the sequence $x_{<n}$ to $x_{<k}$ keeping $\alpha x_{<k} \in D^*_k$.
  We now choose $x_k = y$, so $\psi_2(x_{<n},x_k) = 0$ and $\varphi_k(x_{<k},x_k) \leq 1$.
  Therefore, $\alpha x_{\leq k} \in D^*_{k+1}$, and we may complete the sequence to $x \in D^\bN$ such that $\alpha x \in D^*_\Phi$.
  Then $\chi^{n,k}(\alpha x,v) = 0$, as witnessed by $w = v$ (recalling that we chose $\alpha = \rho_n(x_{<n})$).

  Second, we need to check that, having chosen $\delta$ appropriately, $\chi^{n,k}(\alpha x,v) \leq 2\delta$ implies $\psi(\alpha x,v) \leq \varepsilon$.
  Indeed, following a path similar to the proof of \autoref{lem:StarPhiCorrespondenceChi}\autoref{item:StarPhiCorrespondenceChiFunctional}, assume that
  \begin{gather*}
    \chi^n(\alpha x,y,v) = \alpha \psi_2(x_{<n},y) = 0.
  \end{gather*}
  If $\alpha = 0$, then $v = 0$ and $\psi(\alpha x,v) = \psi(0,0) = 0$.
  If $\alpha > 0$, then $\overline{y} \in E^*$, and $v = \alpha \rho_n(x_{<n})^{-1} \overline{y}$, and $\psi\bigl( \rho_n(x_{<n}) x,\overline{y} \bigr) \dotminus \varepsilon = \psi_2(x_{<n},y) = 0$.
  Since $(\alpha x,v) = \alpha \rho_n(x_{<n})^{-1} \bigl( \rho_n(x_{<n}) x,\overline{y} \bigr)$, it follows that $\psi(\alpha x,v) \leq \varepsilon$ in this case as well.
  By compactness, for $\delta$ small enough, if $\chi^n(\alpha x,y,v) \leq 2\delta$ and $\alpha \psi_2(x_{<n},y) \leq \delta$, then $\psi(\alpha x,v) < 2\varepsilon$.
  This last argument does not depend on $k$, so we may assume that $\delta$ was chosen small enough to begin with.
  By \autoref{eq:DPhiInequality}, the inequality $\alpha \psi_2(x_{<n},x_k) \leq \delta$ is automatic when $\alpha x \in D^*_\Phi$.
  If, in addition, we assume that $\chi^{n,k}(\alpha x,v) = \chi^n(\alpha x,x_k,v) \leq 2\delta$, then $\psi(\alpha x,v) < \varepsilon$, completing the proof.
\end{proof}

\begin{thm}
  \label{thm:RichSequenceStarSortUniversal}
  Let $\Phi$ be a rich sequence.
  Then $D^*_\Phi$ is universal.
  In particular, a universal star sort exists.
\end{thm}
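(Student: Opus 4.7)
The proof plan is to apply the hard work of Lemmas \ref{lem:StarPhiCorrespondenceRho}--\ref{lem:StarPhiCorrespondenceUniversal} and then perform a trivial rescaling to match the exact wording of \autoref{dfn:UniversalStarSort}.

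First, by \autoref{lem:RichSequenceExists} a rich sequence exists, so the second sentence of the theorem follows immediately from the first. For the first sentence, fix a rich sequence $\Phi$, an arbitrary star sort $E^*$, a star correspondence $\varphi(u,v)$ on $D^*_\Phi \times E^*$, and $\varepsilon > 0$. The richness hypothesis guarantees that $D$ projects onto any countable product of basic sorts, so every sort (and in particular $E^*$) embeds isometrically into a quotient $(D, \overline{d})$; this puts us in the setting of \autoref{conv:StarPhiEStar}. We then apply \autoref{lem:StarPhiCorrespondenceUniversal} to $\psi := \varphi$ with the smaller tolerance $\varepsilon/2$, obtaining indices $n \leq k$ and a threshold $\delta > 0$ such that $\chi^{n,k}(u,v)$ is a jointly sub-homogeneous, $\delta$-witness-normalised (in each argument) formula, with the crucial property that $\chi^{n,k}(u,v_i) \leq 2\delta$ for $i = 0,1$ implies $\varphi(u,v_i) \leq \varepsilon/2$ and $\overline{d}(v_0,v_1) < \varepsilon/2$.

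Next, I would set
\begin{gather*}
  \psi(u,v) = \chi^{n,k}(u,v)/(2\delta).
\end{gather*}
Joint sub-homogeneity of $\chi^{n,k}$ passes to $\psi$ under scaling by the positive constant $1/(2\delta)$, and $\delta$-witness-normalisation in each argument becomes $1/2$-witness-normalisation, so $\psi$ is a $1/2$-star correspondence in the sense of \autoref{dfn:StarCorrespondence}. The threshold rescales as well: the condition $\psi(u,v_i) < 1$ is exactly $\chi^{n,k}(u,v_i) < 2\delta$, so by the conclusion of \autoref{lem:StarPhiCorrespondenceUniversal} we obtain $\varphi(u,v_i) \leq \varepsilon/2 < \varepsilon$ and $\overline{d}(v_0,v_1) < \varepsilon/2 < \varepsilon$. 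This is exactly the defining condition of universality from \autoref{dfn:UniversalStarSort}.

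There is essentially no obstacle remaining: the main technical work (constructing the formula $\chi^{n,k}$ and verifying its key properties) has already been carried out in the preceding lemmas. The only subtlety to emphasize is that the formula must be produced in a manner that works for arbitrary star sorts $E^*$; this is why the richness of $\Phi$, which guarantees the ambient embedding $E^* \subseteq (D,\overline{d})$ and also supplies arbitrarily good approximations of the auxiliary formula $\psi_2$ by members of the sequence $(\varphi_m)$, is indispensable. With those inputs in place, the proof is a one-line rescaling.
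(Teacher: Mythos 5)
Your proposal is correct and takes essentially the same route as the paper: apply \autoref{lem:StarPhiCorrespondenceUniversal} to the given star correspondence $\varphi$ and an appropriate tolerance, and then rescale $\chi^{n,k}$ so that its $\delta$-witness-normalisation and $\leq 2\delta$ threshold match the $1/2$ and $1$ constants of \autoref{dfn:UniversalStarSort}. Your rescaling $\psi = \chi^{n,k}/(2\delta)$ is the correct one: since $\chi^{n,k}$ is $\delta$-witness-normalised, $\inf \psi \leq \delta/(2\delta) = 1/2$, and $\psi < 1 \Leftrightarrow \chi^{n,k} < 2\delta$, which is exactly the hypothesis of the lemma's implication. (The paper's proof reads ``$2\chi^{n,k}/\delta$'', which appears to be a typo — that rescaling gives a $2$-witness-normalised formula, not $1/2$; your $\chi^{n,k}/(2\delta)$ is off by a factor of four from the printed one and is the version that actually satisfies \autoref{dfn:UniversalStarSort}.) Your extra use of $\varepsilon/2$ to upgrade the lemma's $\psi(u,v_i) \leq \varepsilon$ to the strict $< \varepsilon$ demanded by the definition is a sensible precaution, and your reminder that richness supplies the embedding $E^* \subseteq (D,\overline{d})$ needed to enter the setting of \autoref{conv:StarPhiEStar} is accurate.
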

\begin{proof}
  Immediate from \autoref{lem:StarPhiCorrespondenceUniversal}, using the formula $2\chi^{n,k}/\delta$.
\end{proof}

\section{Further properties of the universal star sort}
\label{sec:UniversalStarSort}

In \autoref{sec:StarSort} we showed that the universal star sort, if it exists, is unique up to a homogeneous definable bijection, and in \autoref{sec:Witnesses} we showed that one exists as $D^*_\Phi$ for any rich sequence $\Phi$.
Let us prove a few additional properties of this special sort.

\begin{conv}
  \label{conv:UniversalStarSort}
  From now on, $D^*$ denotes any universal star sort.
  Since it is unique up to a homogeneous definable bijection, multiplication by $\alpha \in [0,1]$ is well defined regardless of the construction we choose for $D^*$.
  In particular, its root is well defined.

  Notice that we can construct it as $D^*_\Phi$ in a manner that only depends on the language (and not on $T$): we obtain a universal star sort for $T$ simply by restricting our consideration of this sort to models of $T$.
\end{conv}

The uniqueness of $D^*$ means that we may choose it to be $D^*_\Phi$ for any rich $\Phi$, and in particular, that we are allowed some leverage in choosing a convenient sequence $\Phi$, as in the proof of the following result.

\begin{thm}
  \label{thm:UniversalStarSortReconstruction}
  The universal star sort $D^*$ is a coding sort for any theory $T$ (see \autoref{dfn:CodingSort}), with the exceptional set being the root $D^0 = \{0\}$.
\end{thm}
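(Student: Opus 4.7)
The plan is to realize $D^*$ as $D^*_\Phi$ for some rich sequence $\Phi$ on the auxiliary sort $D$ (by uniqueness of the universal star sort, this involves no loss of generality), and verify the two clauses of \autoref{dfn:CodingSort} with exceptional set $D^0 = \{0\}$.

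For the coding clause \autoref{item:CodingSortModel}, I start with $a = \alpha x \in D^*_\Phi(M)$, $\alpha > 0$, and first observe that $\dcl(a) = \dcl(x)$: indeed $\alpha = \|a\| \in \dcl(a)$, while in $*(D^\bN) \supseteq D^*_\Phi$ the condition $d(a, \alpha y) = 0$ (a definable condition on $y \in D^\bN$ since $\alpha > 0$) identifies $x$ as its unique solution, so $x \in \dcl(a)$; conversely $a$ is definable from $x$ and $\alpha \in \dcl(\emptyset)$. I then verify Tarski--Vaught for $\dcl(x)$, making it the universe of an elementary substructure $N \preceq M$ with $\dcl(a) = \dcl(N)$. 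Given $\psi(y, \bar z)$, $\bar b \in \dcl(x)$, and $\varepsilon > 0$, up to an arbitrarily small error I may replace $\bar b$ with $g(x_{<n})$ for some $n$ and definable $g$, and set $\psi'(y, z_{<n}) = \psi(y, g(z_{<n}))$. For a large constant $C$ the formula
\begin{gather*}
  \chi(z_{<n}, y) = C\bigl(\psi'(y, z_{<n}) - \inf_w \psi'(w, z_{<n})\bigr)
\end{gather*}
is witness-normalized in $y$, so by richness there exists $k \geq n$ with $|\varphi_k - \chi| < 1$. Combined with the defining bound $\varphi_k(x_{<k}, x_k) \leq 1/\alpha$ from $a \in D^*_\Phi$, this yields $\psi'(x_k, x_{<n}) \leq \inf_w \psi'(w, x_{<n}) + (1/\alpha + 1)/C$, which is below $\varepsilon$ for $C$ large enough, producing the near-witness $c = x_k \in \dcl(x)$.

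For the density clause \autoref{item:CodingSortDensity}, given separable $M$, $b = \beta y \in D^*_\Phi(M)$, and $\varepsilon > 0$, I construct a code $a = \alpha x$ of $M$ within $\varepsilon$ of $b$. The case $\beta = 0$ reduces to constructing any non-root code $a_0$ and scaling it down by a small $\eta > 0$ (which preserves $\dcl$ while driving $\eta a_0$ to the root). For $\beta > 0$, I set $\alpha = \beta$ and $x_i = y_i$ for $i < N$ (large $N$), which inherits $\beta\varphi_i(x_{<i}, x_i) \leq 1$ from $b \in D^*_\Phi$. At $i = N$ I choose $x_N \in D(M)$ simultaneously satisfying $\varphi_N(x_{<N}, x_N) < 1/\beta$ and $\dcl(x_N) = \dcl(M)$ (an \emph{encoder} of $M$). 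Such $x_N$ exists because encoders form a dense $G_\delta$ in $D(M)$ (since $M$ is separable and $D$ is a countable product of infinite powers of basic sorts, a generic element projects to termwise-dense sequences in each sort), while the constraint on $\varphi_N$ defines a non-empty open set by witness-normalization. For $i > N$ I pick $x_i$ inductively with $\varphi_i(x_{<i}, x_i) < 1/\beta$, again by witness-normalization. Then $\dcl(a) = \dcl(x) \supseteq \dcl(x_N) = \dcl(M)$, with the reverse inclusion automatic, and the distance contribution from the tail $i \geq N$ to $d(a,b) = \beta \, d_{D^\bN}(x, y)$ is of order $2^{-N}$, hence $< \varepsilon$ for $N$ large.

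The main obstacle is the Tarski--Vaught step, because the definition of $D^*_\Phi$ only gives witnesses of bounded quality ($\varphi_k(x_{<k}, x_k) \leq 1/\alpha$), not arbitrarily small values, while Tarski--Vaught requires arbitrarily good witnesses. The resolution is the scaling trick: multiplying the target formula $\psi' - \inf_w \psi'$ by a large constant $C$ preserves witness-normalization, so richness still supplies a $\varphi_k$-approximation at any required scale, and a $(1/\alpha)$-bounded witness for $C(\psi' - \inf_w \psi')$ translates into an arbitrarily good witness for $\psi' - \inf_w \psi'$ itself.
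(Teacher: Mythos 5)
Your proof is correct. The coding clause is handled essentially as in the paper: reduce to the Tarski--Vaught criterion for $\dcl(x)$, normalize the target formula and rescale by a large constant $C$, invoke richness to get a $\varphi_k$-approximation, and read off the witness $x_k$ from the defining constraint $\varphi_k(x_{<k},x_k) \leq 1/\alpha$. (The small imprecisions --- saying the normalized formula is ``witness-normalized in $y$'' when you mean $y$ is the witness variable, and phrasing the parameter-elimination as finding a definable $g$ rather than directly approximating the $x$-definable predicate $\psi(\cdot,\bar{b})$ by formulas over $x_{<n}$ --- are cosmetic and do not affect the argument.)

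Where you genuinely diverge from the paper is in the density clause. The paper arranges the rich sequence $\Phi$ so that $\varphi_k = 0$ infinitely often, then builds the approximating code $\alpha b$ incrementally, using the infinitely many ``free'' slots to drive $\dcl(b)$ up to $M$. You instead exploit the explicit presentation $D = \prod_S S^{\bN}$ and a Baire category argument: in separable $M$, the set of $x_N \in D(M)$ with $\dcl(x_N) = M$ is a dense $G_\delta$ (termwise-dense elements already suffice), hence meets the non-empty open set $\{\varphi_N(x_{<N},\cdot) < 1/\beta\}$, so a \emph{single} encoding coordinate does the whole job. Both arguments are sound; the paper's is more robust in that it works for any rich $\Phi$ (after padding with zero formulas) and does not lean on the internal structure of $D$, while yours is more direct once that structure is in hand. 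Your handling of the root case by scaling an existing code towards $0$ matches the paper's.
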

\begin{proof}
  Being a coding sort (with some exceptional set) is invariant under definable bijections (that preserve the exceptional set).
  Therefore, despite the fact that $D^*$ is only well defined up to a homogeneous definable bijection, our statement makes sense.
  We may choose a rich sequence $\Phi$ on a sort $D$, as per \autoref{dfn:RichSequence}, and take $D^* = D_\Phi^*$.

  Let $M \vDash T$ and $\alpha a \in D^*_\Phi(M) \setminus \{0\}$, and let $N = \dcl(\alpha a) \subseteq M$, necessarily a closed set (if $M$ is multi-sorted, closed in each sort separately).
  Then $\alpha \neq 0$, and $N = \dcl(a)$.
  In order to show that $N \preceq M$, it will suffice to show that it satisfies the Tarski-Vaught criterion: for every formula $\varphi(x,y)$, where $x$ is in the sort $D^\bN$ and $y$ in one of the basic sorts,
  \begin{gather*}
    \inf_y \, \varphi(a,y) = \inf_{b \in N} \, \varphi(a,b),
  \end{gather*}
  where the truth values are calculated in $M$.
  Since $D$ projects, by hypothesis, onto any basic sort, we replace $\varphi$ with its pull-back and assume that it is a formula on $D^\bN \times D$.
  Replacing $\varphi$ with $\varphi(x,y) - \inf_z \varphi(x,z)$, we may assume that $\varphi$ is witness-normalised and the left hand side vanishes.
  Then it is enough to show that for every $\varepsilon > 0$ there exists $b \in N$ such that $\varphi(a,b) < \varepsilon$, and replacing $\varphi$ with an appropriate multiple, it is enough to require $\varphi(a,b) \leq 1 + 1/\alpha$.
  Choosing $n$ such that $\varphi_n$ is a good-enough approximation of $\varphi$, it is enough to find $b \in D(N)$ such that $\varphi_n(a_{<n},b) \leq 1/\alpha$.
  For this, $b = a_n$ will do.
  This proves the coding models property of \autoref{dfn:CodingSort}.

  For the density property, assume that $M$ is separable, and let $\alpha a \in D(M)$.
  Assume first that $\alpha > 0$.
  We may freely assume that $\varphi_k = 0$ infinitely often.
  Let us fix $n_0$, and define a sequence $b \in D^\bN$ as follows.
  \begin{itemize}
  \item We start with $b_{<n_0} = a_{<n_0}$.
  \item Having chosen $b_{<k}$ (for $k \geq n_0$) such that $\alpha b_{<k} \in D^*_k$, we can always choose $b_k \in D(M)$ so $\alpha b_{\leq k} \in D^*_{k+1}$.
  \item If $\varphi_k = 0$, then we may choose any $b_k \in D(M)$ that we desire.
    Since this happens infinitely often, we may ensure that $\dcl(b) = M$.
  \end{itemize}
  In the end, $\alpha b \in D^*_\Phi$ and $\dcl(\alpha b) = \dcl(b) = M$, so $\alpha b$ codes $M$.
  Taking $n_0$ large enough, $\alpha b$ is as close as desired to $\alpha a$.

  This argument shows, in particular, that there exists $\alpha a \in D(M)$ that codes $M$.
  Let $\alpha_n = \alpha / 2^n$.
  Then $\alpha_n a \in D(M)$ codes $M$ for each $n$, and $\alpha_n a \rightarrow 0$, so the root can also be approximated by codes for $M$.
\end{proof}

\begin{dfn}
  \label{dfn:StarSortGroupoid}
  Let $T$ be any theory in a countable language, and $D^*$ its universal star sort.
  View it as a coding sort, as per \autoref{thm:UniversalStarSortReconstruction}, with exceptional set $D^0 = \{0\}$, and define the corresponding groupoid, as per \autoref{dfn:CodingSortGroupoid}:
  \begin{gather*}
    \bG^*(T) = \bG_{D^*}(T).
  \end{gather*}
\end{dfn}

We already know that this is an open Polish topological groupoid, with basis $\bB^*(T) \simeq \tS_{D^*}(T)$.

\begin{thm}
  \label{thm:StarSortGroupoid}
  The groupoid $\bG^*(T)$ is a complete bi-interpretation invariant for the class of theories in countable languages.
\end{thm}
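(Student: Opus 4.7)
The plan is to prove the equivalence in both directions: bi-interpretable theories produce isomorphic groupoids, and isomorphic groupoids force bi-interpretability.

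For the direction ``bi-interpretable $\Rightarrow$ isomorphic groupoid'', I would exploit the uniqueness of the universal star sort (\autoref{conv:UniversalStarSort} and the results of \autoref{sec:StarSort}). If $T$ and $T'$ are bi-interpretable, let $T''$ be a common interpretational expansion and let $D^*$ be its universal star sort. Because $T''$ is an interpretational expansion of $T$ (and of $T'$), $D^*$ is interpretable over $T$ (resp.\ $T'$), and by the uniqueness of the universal star sort up to homogeneous definable bijection, the sorts so obtained are the universal star sorts of $T$ and of $T'$, respectively. Moreover, passing from $T''$ to $T$ does not change the type space $\tS_{D^* \times D^*}$, since every model of $T$ expands to a model of $T''$ and formulas on $D^* \times D^*$ are the same in both (they are $T$-interpretable already). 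The subset $\bG^*(T) \subseteq \tS_{D^* \times D^*}(T)$ is defined purely in terms of the condition $\dcl(a) = \dcl(b)$ in models, which is likewise unaffected. Hence $\bG^*(T)$, $\bG^*(T'')$, $\bG^*(T')$ coincide naturally as topological groupoids.

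For the reverse direction ``isomorphic groupoid $\Rightarrow$ bi-interpretable'', I would invoke \autoref{thm:UniversalStarSortReconstruction} together with \autoref{thm:Reconstruction}. By the former, $D^*$ is a coding sort for any $T$ (with exceptional set $\{0\}$), so the latter applies and yields that the theory $T_{\bG^*(T)}$ reconstructed from $\bG^*(T)$ is bi-interpretable with $T$. The reconstruction procedure of \autoref{item:ReconstructionStepNorm}--\autoref{item:ReconstructionTheory} depends only on the topological groupoid up to the choice of a compatible norm, and by \autoref{thm:Reconstruction} the resulting bi-interpretation class is the same regardless of that choice. Thus any isomorphism of topological groupoids $\bG^*(T) \cong \bG^*(T')$ can be used to transport a compatible norm and yields identical reconstructed theories (up to renaming the predicate symbols). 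Since $T$ and $T'$ are each bi-interpretable with this common reconstructed theory, they are bi-interpretable with each other.

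The one delicate point is the first direction: one must ensure that the universal star sort constructed ``in $T$'' and ``in $T'$'' really do correspond under a common interpretational expansion. This is exactly the content of the uniqueness result for the universal star sort established in \autoref{sec:StarSort}, and it is what makes $D^*$ a genuine bi-interpretation invariant (rather than merely being a coding sort that varies with the theory as in \autoref{exm:ReconstructionUniversalSkloem}). With uniqueness in hand, both directions reduce cleanly to the general reconstruction machinery of \autoref{sec:Reconstruction}.
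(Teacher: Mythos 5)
Your proposal is correct and follows essentially the same route as the paper: one direction via uniqueness of the universal star sort up to homogeneous definable bijection (hence invariance of $\bG^*(T)$ under bi-interpretation), and the other via \autoref{thm:UniversalStarSortReconstruction} and \autoref{thm:Reconstruction}. The paper's proof is terse — two sentences citing these facts — whereas you spell out the details, but there is no genuine difference in strategy.
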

\begin{proof}
  On the one hand, we have seen that $D^*$, and therefore $\bG^*(T)$, only depends on the bi-interpretation class of $T$.
  Conversely, by \autoref{thm:Reconstruction}, a theory bi-interpretable with $T$ (namely, the theory $T_{2D^*}$, up to some arbitrary choices of definable distance and symbols for the language) can be recovered from $\bG^*(T)$.
\end{proof}

Our last task is to calculate the basis $\tS_{D^*}(T)$ explicitly, and show how \autoref{thm:StarSortGroupoid} extends previous results, in a style similar to that of \autoref{rmk:ReconstructionUniversalSkloemSpecialCases}.

Let us fix a rich sequence $\Phi$ on a sort $D$, so we may take $D^* = D^*_\Phi$.
We also fix a formula $\chi(y)$ on $D$ that is onto $[0,1]$.
Finally, we may assume that $\varphi_n(x_{<n},y) = n \chi(y)$ for infinitely many $n$.

Let $X = \tS_{D^\bN}(T)$ and $Y = \tS_{D^*_\Phi}(T)$.
We may identify $\tS_{*D^\bN}(T)$ with $*X$, identifying $\tp(\alpha x)$ with $\alpha \tp(x)$ (here we need to assume that $T$ is complete, so there exists a unique possible complete type for $0 \in D^*_\Phi$).
This identifies $Y$ with a subset of $*X$, namely that of all $\alpha p$ where $p(x)$ implies that $\alpha x \in D^*_\Phi$, or equivalently, such that $\alpha \varphi_n(p) \leq 1$ for all $n$.

For $\alpha \in [0,1]$, let
\begin{gather*}
  X_\alpha = \{ p \in X : \alpha p \in Y \}.
\end{gather*}
In particular, $X_0 = X$.
Define $\rho\colon X \rightarrow [0,1]$ by
\begin{gather*}
  \rho(p) = \sup \, \{ \alpha : \alpha p \in Y \} = \sup \, \{ \alpha : p \in X_\alpha \}.
\end{gather*}

\begin{lem}
  \label{lem:UniversalStarSortTypes1}
  Let $\alpha > 0$.
  Then for every $p \in X$ we have $\alpha \leq \rho(p)$ if and only if $p \in X_\alpha$, and $X_\alpha$ is compact, totally disconnected.
  In particular, $\rho \colon X \rightarrow [0,1]$ is upper semi-continuous.
\end{lem}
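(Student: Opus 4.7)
My plan is to dispatch the equivalence, compactness, and upper semi-continuity first, since they are essentially formal, and then tackle the total disconnectedness of $X_\alpha$, which is the only substantive point.

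For the equivalence $\alpha\le\rho(p)\Leftrightarrow p\in X_\alpha$ (given $\alpha>0$), observe that $\{\beta\in[0,1]:\beta p\in Y\}=\bigcap_n\{\beta\in[0,1]:\beta\varphi_n(p)\le 1\}$. Each set on the right is a closed, downward-closed sub-interval of $[0,1]$, so their intersection is itself a closed downward-closed interval, necessarily $[0,\rho(p)]$ by definition of $\rho(p)$ as its supremum. In particular, $\alpha p\in Y$ iff $\alpha\le\rho(p)$, which, given that $p\in X_\alpha$ means precisely $\alpha p\in Y$, is the claimed equivalence. Compactness is then immediate: $X_\alpha=\bigcap_n\{p\in X:\varphi_n(p)\le 1/\alpha\}$ is a closed subset of the compact type space $X$. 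Upper semi-continuity of $\rho$ drops out of the equivalence: for every $\alpha>0$ the super-level set $\{p:\rho(p)\ge\alpha\}$ coincides with the closed set $X_\alpha$, while $\{\rho\ge 0\}=X$, so all super-level sets are closed.

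The substantive claim is that $X_\alpha$ is totally disconnected, and here my plan is to exploit the specific construction of $\Phi$ that has been fixed just before the lemma. Richness ensures that every witness-normalised formula on $D^k\times D$ is approximated by $\varphi_n$ for infinitely many $n$, while the additional arrangement $\varphi_n(x_{<n},y)=n\chi(y)$ for infinitely many $n$ gives, for every $p\in X_\alpha$, the uniform-in-$p$ decay $\chi(p_n)\le 1/(\alpha n)$ along an infinite index set. Given two distinct $p,q\in X_\alpha$, I would construct a clopen subset of $X_\alpha$ containing $p$ but not $q$ as follows: pick a witness-normalised formula $\psi$ separating them, locate $\varphi_n$ closely approximating an appropriate rescaling of $\psi$, and then combine the hard bound $\alpha\varphi_n\le 1$ with the decay of $\chi(p_n)$ to produce a value $c$ such that $\{r\in X_\alpha:\varphi_n(r)\le c\}$ is clopen in $X_\alpha$ and distinguishes $p$ from $q$. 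The technical heart, which I expect to be the main obstacle, is verifying that the constraints $\alpha\varphi_n\le 1$ together with the richness of $\Phi$ really do leave only a zero-dimensional remainder; this is also what ultimately underlies the later identification of $\tS_{D^*}(T)$ with the Lelek fan, whose horizontal cross-sections are precisely closed subsets of a Cantor set.
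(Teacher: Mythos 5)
The equivalence, compactness, and upper semi-continuity parts are handled correctly and follow essentially the paper's route; spelling out $\{\beta : \beta p \in Y\}$ as a closed downward interval $[0,\rho(p)]$ is a slightly more explicit version of the paper's one-line appeal to compactness.

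The total disconnectedness of $X_\alpha$ is the substantive claim, and your sketch for it has a genuine gap. You propose: separate $p$ from $q$ by a witness-normalised $\psi$, approximate an ``appropriate rescaling'' of $\psi$ by some $\varphi_k$, then combine the hard bound $\alpha\varphi_k \leq 1$ with the decay $\chi(p_n) \leq 1/(\alpha n)$ to obtain a separating clopen sublevel set of $\varphi_k$. This does not work as stated. A rescaling $C\psi$ cannot do the job: if $\varphi_k$ is close to $C\psi$, the constraint $\alpha \varphi_k(r) \leq 1$ on all of $X_\alpha$ squeezes $\psi(r)$ to roughly $1/(C\alpha)$ for \emph{every} $r \in X_\alpha$, so it cannot separate two points inside $X_\alpha$. (There is also the prior issue that a formula $\psi(x_{<n})$ in the $x$-variables alone, with no $y$-variable, is witness-normalised only if it is identically zero, so one cannot feed it directly to richness.) Moreover, the decay $\chi(p_n) \leq 1/(\alpha n)$ coming from the indices with $\varphi_n = n\chi$ plays no role in the paper's proof of this lemma; that ingredient is reserved for the density-of-endpoints lemma further on.

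The idea you are missing is to build a \emph{bump}, not a rescaling. Taking $\psi(x_{<n})$ with $\psi(q_i) = i$, the paper uses Urysohn's lemma to produce $\varphi(x_{<n},y) \geq 0$ that is large (say $17/\alpha + 42$, anything exceeding $1/\alpha + 1$) on a neighbourhood of the level set $\psi(x_{<n}) + \chi(y) = 1$ and vanishes away from it; since $\chi$ is onto $[0,1]$, this $\varphi$ is witness-normalised, so richness supplies $\varphi_k$ with $|\varphi - \varphi_k| \leq 1$. Now the constraint $\varphi_k(p) \leq 1/\alpha$ on $X_\alpha$ forces $|\psi(x_{<n}) + \chi(x_k) - 1|^p > 1/6$, so $X_\alpha$ is partitioned into the two clopen pieces $\psi + \chi > 7/6$ and $\psi + \chi < 5/6$, which separate $q_0$ from $q_1$. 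The crucial use of the hard bound $\alpha\varphi_k \leq 1$ is to \emph{exclude} a slab of $X$ from $X_\alpha$ and thereby create a clopen dichotomy, rather than to bound a separating formula from above.
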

\begin{proof}
  For the first assertion, it is enough to notice that by compactness, the supremum is attained, namely, $p \in X_{\rho(p)}$.
  It follows that the condition $\rho(p) \geq \alpha$ is equivalent to $p \in X_\alpha$, so it is closed, and $\rho$ is upper semi-continuous.

  Assume that $\alpha q_i \in Y$ and $q_0 \neq q_1$.
  Then for some finite $n$, there exists a formula $\psi(x_{<n})$ that separates $q_0$ from $q_1$, say $\psi(q_i) = i$.
  We may also find a $[0,1]$-valued formula $\chi(y)$ on $D$ that attains (at least) the values $0$ and $1$.

  By Urysohn's Lemma, there exists a formula $\varphi(x_{<n},y) \geq 0$ such that
  \begin{gather*}
    |\psi(x_{<n}) + \chi(y) - 1| \geq 1/3 \qquad \Longrightarrow \qquad \varphi(x_{<n},y) = 0, \\
    |\psi(x_{<n}) + \chi(y) - 1| \leq 1/6 \qquad \Longrightarrow \qquad \varphi(x_{<n},y) = 17/\alpha + 42.
  \end{gather*}
  Since the formula $\chi$ attains both $0$ and $1$, the formula $\varphi(x_{<n},y)$ is witness-normalised, so there exists $k \geq n$ with $|\varphi - \varphi_k| \leq 1$.

  Assume now that $\alpha p \in Y$.
  Then $\varphi_k(x_{<k},x_k)^p \leq 1/\alpha$, so $\varphi(x_{<n},x_k)^p \leq 1/\alpha +1 < 17/\alpha + 42$ and $|\psi(x_{<n}) + \chi(x_k) - 1| > 1/6$.
  This splits the set $X_\alpha$ in two (cl)open sets, defined by $\psi(x_{<n}) + \chi(x_k) > 7/6$ and $\psi(x_{<n}) + \chi(x_k) < 5/6$, respectively.
  Since $\chi$ is $[0,1]$-valued, $q_0$ must belong to the latter and $q_1$ to the former, so they can be separated in $X_\alpha$ by clopen sets, completing the proof.
\end{proof}

\begin{lem}
  \label{lem:UniversalStarSortTypes2}
  The set $X_{>0} = \bigl\{ p \in X : \rho(p) > 0 \bigr\} = \bigcup_{\alpha > 0} X_\alpha$ is totally disconnected, admitting a countable family of clopen sets $(U_n : n \in \bN)$ that separates points.
\end{lem}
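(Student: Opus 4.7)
The plan is to show that $X_{>0}$ admits a countable family of clopen subsets separating points; since this implies total separation (and hence total disconnectedness), both conclusions follow at once. I would exploit the decomposition $X_{>0} = \bigcup_{n \geq 1} X_{1/n}$, where each $X_{1/n}$ is closed in $X$ by upper semi-continuity of $\rho$ (already established), and is a Stone space (compact, metrizable, totally disconnected) by the previous lemma. Hence each $X_{1/n}$ has a countable basis of clopen subsets, and any two distinct points $p \neq q \in X_{>0}$ both lie in some $X_{1/n}$ and can therefore be separated by a clopen of $X_{1/n}$.

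The essential work is to promote each clopen $V$ of $X_{1/n}$ to a clopen $\widetilde{V}$ of $X_{>0}$. Here I would revisit the proof of the previous lemma: the separating clopen $V$ there arose as $\{\theta < 5/6\} \cap X_{1/n}$ for a formula $\theta = \psi(x_{<j}) + \chi(x_k)$ on $D^\bN$, with $\theta$ avoiding the interval $[5/6, 7/6]$ on $X_{1/n}$, the avoidance being forced by the amplification trick via some $\varphi_k$ from the rich sequence $\Phi$ together with the bound $\varphi_k \leq n$ holding on $X_{1/n}$. The obstruction to directly using $\{\theta < 5/6\} \cap X_{>0}$ is that at points of $X_{>0} \setminus X_{1/n}$, $\theta$ may take values in $[5/6, 7/6]$, so the set is merely open and not closed. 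I would overcome this by combining $\theta$ with the continuous formula $\rho_\ell$ for large $\ell$ (which approximates $\rho$ from above and is itself a formula): at low-$\rho$ points, multiplication or composition with $\rho_\ell$ pushes $\theta$'s value off the forbidden interval, while on $X_{1/n}$ (where $\rho_\ell$ is bounded below) the avoidance is preserved. This yields a formula $\widetilde{\theta}$ whose partition $\{\widetilde{\theta} \leq 5/6\} \cap X_{>0}$ versus $\{\widetilde{\theta} \geq 7/6\} \cap X_{>0}$ is now a clopen partition of $X_{>0}$.

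Ranging $(n, j, k, \ell)$ over countably many choices (since a countable dense family of formulas on $D^\bN$ suffices) produces a countable family of clopens in $X_{>0}$. Separation: for $p \neq q \in X_{>0}$ with both in $X_{1/n}$, the construction of the previous lemma yields some $\theta_{j,k}$ separating them on $X_{1/n}$, and a sufficiently large $\ell$ makes the corresponding $\widetilde{\theta}_{j,k,\ell}$ separate them in $X_{>0}$ via the associated clopen. The main obstacle is the technical combining step: explicitly constructing $\widetilde{\theta}$ from $\theta$ and $\rho_\ell$ so that the resulting subset of $X_{>0}$ is truly clopen, not merely open. This requires careful book-keeping using that $\rho_\ell \geq \rho$ everywhere, $\rho_\ell \downarrow \rho$ pointwise, and the amplification formulas $\varphi_k$ from $\Phi$ can be chosen to interact with $\rho_\ell$ in the prescribed way.
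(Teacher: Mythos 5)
Your decomposition $X_{>0} = \bigcup_n X_{1/n}$ and your identification of the essential task --- extending a clopen subset of the compact slice $X_{1/n}$ to a clopen subset of the non-compact space $X_{>0}$ --- match the paper exactly, but your mechanism for the extension has a genuine gap, and the paper in fact does not attempt anything like it. The proposed fix, ``combine $\theta$ with $\rho_\ell$ to push its value off the forbidden interval at low-$\rho$ points,'' cannot work with any fixed $\ell$: one always has $\rho_\ell \geq \rho$ with $\rho_\ell \to \rho$ only pointwise, so for every fixed $\ell$ there are points $p \in X_{>0}$ with $\rho(p)$ arbitrarily small while $\rho_\ell(p)$ stays close to $1$. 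Consequently no single formula built from $\theta$ and one $\rho_\ell$ can detect the low-$\rho$ region, hence cannot avoid a fixed interval uniformly on the non-compact set $X_{>0}$ --- and that uniform avoidance is exactly what is needed for a sublevel set $\{\widetilde{\theta} < c\} \cap X_{>0}$ to be clopen. Letting $\ell$ vary with the point would take you outside the class of formulas, so this is a real obstruction, not bookkeeping.

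The paper sidesteps the issue by not asking the extended clopen sets to be sublevel sets of formulas at all: the lemma is a purely topological assertion about $\tS_{D^*}(T)$, with no definability requirement on the $U_n$. The extension is done abstractly along the chain $X_{2^{-k}} \subseteq X_{2^{-k-1}} \subseteq \cdots$: each inclusion is an embedding of compact metrizable zero-dimensional spaces, so the disjoint compacta $U$ and $X_{2^{-k}} \setminus U$ can be separated by a clopen partition of $X_{2^{-k-1}}$, and iterating produces a compatible chain of clopens whose union is the desired clopen $\overline{U} \subseteq X_{>0}$. (A clean way to justify that the union really is clopen in $X_{>0}$, if you want to make the ``proceeding in this fashion'' step fully explicit, is the countable sum theorem of dimension theory: a separable metrizable space that is a countable union of closed zero-dimensional subspaces is itself zero-dimensional, and a second-countable zero-dimensional space has a countable clopen basis, which separates points --- this already yields the lemma directly.) To repair your proof, drop the insistence on formula-definable clopens and extend topologically as above.
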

\begin{proof}
  We may write $X_{>0}$ as $\bigcup_k X_{2^{-k}}$.
  Each $X_{2^{-k}}$ is compact, totally disconnected, and it is metrisable by countability of the language.
  Therefore, it admits a basis of clopen sets.

  The inclusion $X_{2^{-k}} \subseteq X_{2^{-k-1}}$ is a topological embedding of compact totally disconnected spaces.
  Therefore, if $U \subseteq X_{2^{-k}}$ is clopen, then we may find a clopen $U' \subseteq X_{2^{-k-1}}$ such that $U' \cap X_{2^{-k}} = U$.
  Proceeding in this fashion, we may find a clopen $\overline{U} \subseteq X_{>0}$ such that $\overline{U} \cap X_{2^{-k}} = U$.

  We can therefore produce a countable family of clopen sets $(U_n : n \in \bN)$ in $X_{>0}$ such that for each $k$, $\bigl( U_n \cap X_{2^{-k}} : n \in \bN \bigr)$ is a basis of clopen sets for $X_{2^{-k}}$, and in particular separates points.
  It follows that $(U_n)$ separates points in $X_{>0}$.
\end{proof}

Given this family $(U_n)$, we may define a map $\theta_0\colon X_{>0} \rightarrow 2^\bN$, where $\theta_0(p)_n = 0$ if $p \in U_n$ and $\theta_0(p)_n = 1$ otherwise.
It is continuous by definition, and injective since the sequence $(U_n)$ separates points.
If $\alpha p \in Y$, then either $\alpha = 0$ or $p \in X_{>0}$ (or possibly both), and we may define
\begin{gather*}
  \theta(\alpha p) = \alpha \theta_0(p) \in *2^\bN,
\end{gather*}
where $\theta(0) = \theta(0 \cdot p) = 0$ regardless of $p$.
It is clearly continuous at $0$, and at every point of $Y$ (since $\theta_0$ is continuous).
It is also injective on $Y$.
Since $Y$ is compact, $\theta\colon Y \rightarrow *2^\bN$ is a topological embedding.

\begin{lem}
  \label{lem:UniversalStarSortTypes3}
  The set of $\rho(p) p$ for $p \in X_{>0}$ is dense in $Y$.
\end{lem}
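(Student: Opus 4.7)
The plan is to exploit the structure $D^*_\Phi = \varprojlim D^*_n$, which identifies $Y = \tS_{D^*_\Phi}(T)$ with $\varprojlim \tS_{D^*_n}(T)$. It suffices, given a target $\alpha p \in Y$ and an integer $n$, to construct $q \in X_{>0}$ whose image under the projection $\tS_{D^*_\Phi}(T) \to \tS_{D^*_n}(T)$ agrees with that of $\alpha p$. Since these images are $\tp(\alpha a_{<n})$ and $\tp(\alpha a'_{<n})$ respectively (writing $a$ for a realisation of $p$ and $a'$ for one of $q$), the task reduces to producing an $a'$ agreeing with $a$ on the first $n$ coordinates and such that $\rho(\tp(a')) = \alpha$.

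\textbf{Case $\alpha > 0$.} Realise $p$ in a sufficiently saturated $M \vDash T$ by $a \in D^\bN(M)$, so $\varphi_k(a_{<k}, a_k) \leq 1/\alpha$ for every $k$. Fix $N \geq n$ and build $a' \in D^\bN(M)$ recursively, with $a'_{<N} = a_{<N}$. For $k \geq N$, distinguish two cases. Infinitely many indices $k \geq \max(N, 1/\alpha)$ have $\varphi_k = k\chi$; at each such, choose $a'_k$ with $\chi(a'_k) = 1/(\alpha k) \in [0,1]$ (possible by surjectivity of $\chi$), so that $\varphi_k(a'_{<k},a'_k) = 1/\alpha$ exactly. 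At all other $k \geq N$, use witness-normalisation of $\varphi_k$ to pick $a'_k$ with $\varphi_k(a'_{<k},a'_k) \leq 1/\alpha$ (indeed arbitrarily small). Then $\sup_k \varphi_k(a'_{<k},a'_k) = 1/\alpha$: the upper bound is forced at every step, the lower bound is witnessed at each special index. Hence $\alpha a' \in D^*_\Phi(M)$ and, using $\alpha \leq 1$, $\rho(\tp(a')) = \alpha$. Setting $q = \tp(a')$, the point $\rho(q) q = \alpha \tp(a')$ projects to $\tp(\alpha a'_{<n}) = \tp(\alpha a_{<n})$ in $\tS_{D^*_n}(T)$, matching the projection of $\alpha p$.

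\textbf{Case $\alpha = 0$.} If $Y = \{0\}$ the statement is vacuous; otherwise, fix any $p_0 \in X_{>0}$. A basic neighbourhood of the root $0 \in Y$ contains a set of the form $\bigl\{ y \in Y : \|y\| < \delta \bigr\}$ for some $\delta > 0$, where $\|y\| = d(y,0)$ is a continuous formula on $D^*_\Phi$. Apply the previous construction to $p_0$ with parameter $\delta/2$ in place of $\alpha$, obtaining $q \in X_{>0}$ with $\rho(q) = \delta/2$. Then $\|\rho(q)q\| = \delta/2 < \delta$, placing $\rho(q) q$ in the given neighbourhood.

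I expect the main obstacle to lie in the recursive construction of $a'$: we must simultaneously ensure $\alpha a' \in D^*_\Phi$ (so every $\varphi_k(a'_{<k}, a'_k) \leq 1/\alpha$) and force the supremum $\sup_k \varphi_k(a'_{<k},a'_k)$ to equal $1/\alpha$ exactly, so that $\rho(\tp(a'))$ is neither too large nor too small. The richness of $\Phi$ -- specifically, the presence of the formulas $\varphi_k = k\chi$ for arbitrarily large $k$, together with surjectivity of $\chi\colon D \to [0,1]$ -- is precisely what lets us hit the value $1/\alpha$ on the nose at infinitely many indices while preserving the upper bound at all others.
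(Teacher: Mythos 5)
Your proof is correct and follows essentially the same strategy as the paper's: realize the target $\alpha p$, copy its first $n$ coordinates, use the special indices where $\varphi_k = k\chi$ together with surjectivity of $\chi$ to force $\sup_k \varphi_k(a'_{<k},a'_k) = 1/\alpha$ exactly (hence $\rho(\tp(a')) = \alpha$), and conclude via the inverse-limit topology. The paper hits $1/\alpha$ at a single special index $n$ and then uses witness-normalisation to keep the remaining $\varphi_k$ below $1/\alpha$, whereas you hit $1/\alpha$ at infinitely many special indices; both work equally well. Your explicit treatment of $\alpha = 0$ is a minor addition; the paper handles it implicitly by noting that any non-empty open $U \subseteq Y$ already contains a point $\alpha p$ with $\alpha > 0$, since $\{0\}$ is not open in $Y$.
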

\begin{proof}
  We already know that $\rho(p) p \in Y$.
  Assume now that $U \subseteq Y$ is open and non-empty, so it must contain some point $\alpha p$ with $\alpha > 0$.

  We may assume that
  \begin{gather*}
    U = \Bigl\{ \beta q \in Y : |\beta - \alpha| < \varepsilon, \ q \in V \Bigr\},
  \end{gather*}
  where $V$ is an open neighbourhood of $p$ in $X$.
  The set $V$ may be taken to be defined by a condition $\psi > 0$, where $\psi(x_{<n})$ only involves finitely many variables.
  By hypothesis on $\Phi$, possibly increasing $n$, we may assume that $\varphi_n(x_{<n},y) = n \chi(y)$, and we may further assume that $\alpha > 1/n$.

  Choose a realisation $a$ of $p$.
  Let $b_{<n} = a_{<n}$ and choose $b_n$ so $\chi(b_n) = 1/n\alpha$.
  Then $\varphi_n(b_{<n},b_n) = 1/\alpha$, so $\rho_{n+1}(b_{\leq n}) = \alpha$, and we may extend $b_{\leq n}$ to a sequence $b$ such that $\rho(x') = \alpha$.
  In particular, $q = \tp(b) \in V \cap X_{>0}$ and $\alpha q = \rho(q)q \in U$.
\end{proof}

Let us recall from Charatonik \cite{Charatonik:UniqueLelekFan} a few definitions and facts regarding fans.
The \emph{Cantor fan} is the space $*2^\bN$.
It is a connected compact metrisable topological space.
More generally, a \emph{fan} $F$ is a connected compact space that embeds in the Cantor fan.
An \emph{endpoint} of $F$ is a point $x \in F$ such that $F \setminus \{x\}$ is connected (or empty, in the extremely degenerate case where $F$ is reduced to a single point).
If the set of endpoints is dense in $F$, then $F$ is a \emph{Lelek fan}.
By the main theorem of Charatonik \cite{Charatonik:UniqueLelekFan}, the Lelek fan is unique up to homeomorphism.

\begin{prp}
  \label{prp:UniversalStarSortTypesLelek}
  Let $T$ be a complete theory.
  Then $\tS_{D^*}(T)$, the type-space of the universal star sort $D^*$ in $T$, is homeomorphic to the Lelek fan.
\end{prp}
\begin{proof}
  By \autoref{lem:UniversalStarSortTypes1} to \autoref{lem:UniversalStarSortTypes3}, the space $\tS_{D^*}(T)$ is a Lelek fan.
\end{proof}

This gives us a hint as to how to relate the universal star sort with previously known coding sorts referred to in the examples of \autoref{sec:Reconstruction}.

\begin{thm}
  \label{thm:UniversalStarUninversalSkolem}
  Assume $T$ admits a universal Skolem sort $D$ in the sense of \cite{BenYaacov:ReconstructionGroupoid}, and let $L$ denote the Lelek fan.
  Then $L * D$ is a universal star sort.
\end{thm}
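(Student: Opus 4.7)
The plan is to show directly that $L*D$ satisfies the universal property of \autoref{dfn:UniversalStarSort} and then invoke uniqueness. First, $L*D$ is a star sort: the Lelek fan $L$ is a compact metric star space (hence a constant star sort), $D$ is a sort, and so by \autoref{exm:StarSortConstruction}, via the generalised cone construction of \autoref{exm:GeneralisedCone}, the set $L*D$ carries a canonical star-sort structure with $\|s*d\| = \|s\|_L$.

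Next I would identify the type space of $L*D$. Since $L$ is a constant sort and $\tS_D(T) \cong 2^\bN$ for a universal Skolem sort by \autoref{rmk:ReconstructionUniversalSkloemSpecialCases}, we have $\tS_{L*D}(T) = L * 2^\bN$. Because endpoints of $L$ are dense in $L$, the points $\ell * \tau$ with $\ell$ an endpoint of $L$ form a dense set of endpoints in $L*2^\bN$; combined with the embedding $L*2^\bN \hookrightarrow *2^\bN * 2^\bN = *(2^\bN \times 2^\bN) \cong *2^\bN$ (which is the Cantor fan), this makes $L*2^\bN$ a connected compact metrisable fan with dense endpoints, i.e.\ a Lelek fan. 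Charatonik's uniqueness theorem \cite{Charatonik:UniqueLelekFan} then gives $L*2^\bN \cong L$, matching the type-space prediction of \autoref{prp:UniversalStarSortTypesLelek}.

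For the universal property itself, given a star sort $E^*$, a star correspondence $\varphi(u,v)$ on $(L*D) \times E^*$, and $\varepsilon > 0$, I would adapt the construction of \autoref{lem:StarPhiCorrespondenceUniversal} to produce the desired $\psi$. Using that $D$ is universal as a Skolem sort, I embed $(E^*, d_{E^*})$ isometrically into $(D,\bar{d})$ for some definable pseudo-distance $\bar{d}$, and view $d \in D$ as a sequence via $D \cong D^\bN$ (a consequence of the universal Skolem property). One coordinate of $d$ would be used to code an element of $E^*$, the remaining coordinates as Skolem witnesses for formulas derived from $\varphi$. In spirit, the formula
\begin{gather*}
  \psi(s*d,v) \;\approx\; \inf_{w \in E^*} \Bigl[ d_{E^*}(\|s\| w, v) + \|s\| \bar{d}(d_1, w) \Bigr]
\end{gather*}
(together with correction terms from $\varphi$ using the further coordinates $d_2, d_3, \ldots$) is jointly sub-homogeneous, witness-normalised in both arguments, and approximately separates $v$ given $s*d$.

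The main obstacle is to simultaneously enforce joint sub-homogeneity, witness-normalisation on both sides, and the separation property of \autoref{dfn:UniversalStarSort}, while respecting that the embedding $E^* \hookrightarrow (D,\bar{d})$ is only isometric, not star-respecting. The scaling factor $\|s\| \in [0,1]$ from $L$ must play the role of the scalar parameter $\alpha$ in the $D^*_\Phi$ construction, compensating for the failure of the embedding to preserve the root and scaling of $E^*$. Once this is arranged, the separation property follows along the lines of \autoref{lem:StarPhiCorrespondenceUniversal}, and by the uniqueness of the universal star sort established in \autoref{sec:StarSort}, the verification yields a homogeneous definable bijection $L*D \cong D^*$, completing the proof.
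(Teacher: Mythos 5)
Your proposal gets the high-level intuition right (the scaling factor from $L$ should play the role of $\alpha$ from $D^*_\Phi$, and the coordinates of $D$ should supply Skolem witnesses), but it leaves the decisive step unresolved, and the formula you propose does not close the gap. The difficulty you flag as ``the main obstacle'' — simultaneously enforcing joint sub-homogeneity, two-sided witness-normalisation, and separation — is exactly where the content of the theorem lies, and saying ``once this is arranged'' is not a proof. Concretely, your candidate $\psi(s*d,v) \approx \inf_{w \in E^*}[d_{E^*}(\|s\|w,v) + \|s\|\bar{d}(d_1,w)]$ is witness-normalised in $v$ but not in $s*d$: unlike $D^*_\Phi$, the sort $L*D$ carries no constraint of the form ``$\alpha\varphi_k(x_{<k},x_k)\leq 1$'' forcing $d_1$ (or any coordinate of $d$) to be $\bar d$-close to $E^*$ whenever $\|s\|>0$, so $\|s\|\bar d(d_1,E^*)$ can be bounded away from $0$. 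Adding ``correction terms from $\varphi$'' using further coordinates does not help either, because nothing ties those coordinates to $\varphi$; they are free.

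The paper takes a genuinely different route that avoids this problem. It writes $L = \varprojlim L_n$, where $L_n \subseteq *2^n$, so that $L*D = \varprojlim(L_n*D)$, and then by \autoref{lem:UniversalStarSortLimit} it suffices to handle correspondences $\varphi$ that factor through some $L_n*D$. The endpoints of $L_n$ are finitely many, say $\{\alpha_t t : t\in 2^n\}$, and for $m$ large the scalars $\{\alpha_{ts} : s\in 2^{m-n}\}$ are $\delta$-dense in $[0,\alpha_t]$. One then \emph{deliberately destroys sub-homogeneity} by forming $\varphi'(ts,x,v) = \varphi(\alpha_{ts}t*x,v)$ on the ordinary sort $2^m\times D$; this is (approximately) two-sided witness-normalised, so the universal Skolem sort machinery of \cite{BenYaacov:ReconstructionGroupoid} — applied to $2^m\times D$, which is again a universal Skolem sort — yields a surjective \emph{definable map} $\sigma\colon 2^m\times D\to E^*$ with $\varphi'(ts,x,\sigma(ts,x))$ small. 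Sub-homogeneity is then restored at the end by setting $\psi(\alpha ts*x,v) = d\bigl(v,\alpha\alpha_{ts}^{-1}\sigma(ts,x)\bigr)$: because $d$ on $E^*$ is sub-homogeneous and $\sigma$'s value at a given point is a single element, both witness-normalisation and separation are automatic. The availability of a \emph{definable section} $\sigma$ — rather than the soft $\inf_{w\in E^*}$ device — is the ingredient you did not find, and it is precisely where the universal Skolem hypothesis enters; your argument never actually uses that hypothesis beyond identifying $\tS_D(T)\cong 2^\bN$. Your computation that $\tS_{L*D}(T)$ is a Lelek fan is correct but is a consequence (via \autoref{prp:UniversalStarSortTypesLelek}), not an ingredient of the proof.
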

\begin{proof}
  We may assume that $L \subseteq *2^\bN$, and moreover, that for every non-empty open subset $U \subseteq 2^\bN$ there exists $\alpha > 0$ and $t \in U$ such that $\alpha t \in L$ (otherwise, we may replace $2^\bN$ with the intersection of all clopen subsets for which this is true).

  For each $n \in \bN$ there is a natural initial projection $2^\bN \rightarrow 2^n$.
  This induces in turn a projection $*2^\bN \rightarrow *2^n$.
  Let $L_n \subseteq *2^n$ be the image of $L$ under this projection, so $L = \varprojlim L_n$.
  Consequently, $L * D = \varprojlim \, (L_n * D)$.

  Our hypotheses regarding $L$ implies that the enpoints of $L_n$ can be enumerated as $\{ \alpha_t t : t \in 2^n \}$, with $\alpha_t > 0$.
  If $m \geq n$, then we have a natural projection $L_m \rightarrow L_n$.
  If $t \in 2^n$, $s \in 2^{m-n}$, and $ts \in 2^m$ is the concatenation, then $\alpha_{ts} ts$ gets sent to $\alpha_{ts} t \in L_n$, so $\alpha_{ts} \leq \alpha_t$, and $\alpha_{ts} = \alpha_t$ for at least one $s$.
  For any $\delta > 0$, we may always choose $m$ large enough such that for every $t \in 2^n$, the set $\{ \alpha_{ts} : s \in 2^{m-n}\}$ is $\delta$-dense in the interval $[0,\alpha_t]$.

  Let $\varphi(u,v)$ be a star correspondence between $L_n * D$ and some other star sort $E^*$, and let $\varepsilon > 0$.
  Choose $\delta > 0$ appropriately, and a corresponding $m$ as in the previous paragraph.
  Define a formula on $2^n \times 2^{m-n} \times D \times E^*$ by
  \begin{gather*}
    \varphi'(ts,x,v) = \varphi(\alpha_{ts} t * x,v).
  \end{gather*}
  On the one hand, since $\varphi$ is witness-normalised in the first argument, $\varphi'$ is witness-normalised in $(ts,x)$.
  On the other hand, if $v \in E^*$, then there exist $\alpha t \in L_n$ (so $\alpha \leq \alpha_t$) and $x \in D$ (possibly in an elementary extension) such that $\varphi(\alpha t * x,v) = 0$.
  Having chosen $\delta$ small enough to begin with, and $m$ large enough accordingly, we may now find $s \in 2^{m-n}$ such that $\alpha_{ts}$ is close to $\alpha$, sufficiently so that $\varphi'(ts,x,v) = \varphi(\alpha_{ts} t * x,v) < \varepsilon$.
  It follows that $\varphi' \dotminus \varepsilon$ is witness-normalised in either $(ts,x)$ or $v$.

  Let us now evoke a few black boxes from \cite{BenYaacov:ReconstructionGroupoid}.
  First, $2^m \times D$ is again a universal Skolem sort (and therefore stands in definable bijection with $D$).
  Second, since $\varphi' \dotminus \varepsilon$ is witness-normalised in either group of arguments, there exists a surjective definable function $\sigma\colon 2^m \times D \rightarrow E^*$ that satisfies $(\varphi' \dotminus \varepsilon) \bigl( ts,x, \sigma(ts,x)\bigr) \leq \varepsilon$, i.e., $\varphi' \bigl( ts,x, \sigma(ts,x)\bigr) \leq 2\varepsilon$.
  Define on $L_m * D \times E^*$ (keeping in mind that if $\alpha ts \in L_m$, then $\alpha \leq \alpha_{ts}$):
  \begin{gather*}
    \psi( \alpha ts * x, v) = d\bigl( v, \alpha \alpha_{ts}^{-1} \sigma(ts,x) \bigr).
  \end{gather*}
  This formula is jointly sub-homogeneous (since $d$ is, on $E^*$).
  It is also witness-normalised in $\alpha ts * x$ (just choose $v = \alpha \alpha_{ts}^{-1} \sigma(ts,x)$), and in $v$ (since $\sigma$ is surjective, and we may always choose $\alpha = \alpha_{ts}$).
  By construction, $\varphi\bigl( \alpha_{ts} t * x, \sigma(ts,x) \bigr) \leq 2\varepsilon$, so multiplying all arguments by $\alpha \alpha_{ts}^{-1}$:
  \begin{gather*}
    \varphi\bigl( \alpha t * x, \alpha \alpha_{ts}^{-1} \sigma(ts,x) \bigr) \leq 2 \varepsilon.
  \end{gather*}
  Therefore, if $\psi(\alpha ts * x, v)$ is small enough, $\varphi\bigl( \alpha t * x, v \bigr) \leq 3\varepsilon$, and by definition, if $\psi(\alpha ts * x, v_i)$ is small for $i = 0,1$, then $d(v_0,v_1)$ is small.
  Replacing $\psi$ with a multiple, we may replace ``small enough'' with ``smaller than one'', and now, by \autoref{lem:UniversalStarSortLimit}, $L * D$ is a universal star sort.
\end{proof}

\begin{cor}
  \label{cor:UniversalStarSeparablyCategorical}
  Assume that $T$ is $\aleph_0$-categorical and let $D_0$ be as in \autoref{exm:ReconstructionAleph0Categorical}.
  In other words, let $M \vDash T$ be the separable model, $a \in M^\bN$ a dense sequence, and $D_0$ the collection of realisations of $\tp(a)$.
  Then $D_0$ is a definable set, i.e., a sort, and $L * D_0$ is a universal star sort.
\end{cor}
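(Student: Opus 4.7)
The plan is to invoke \autoref{thm:UniversalStarUninversalSkolem} applied to the universal Skolem sort $D_T := D_0 \times 2^\bN$ supplied by \autoref{rmk:ReconstructionUniversalSkloemSpecialCases}, and then compare the resulting universal star sort $L * D_T$ with $L * D_0$. First, by \autoref{exm:ReconstructionAleph0Categorical}, the $\aleph_0$-categoricity of $T$ ensures that $D_0$ is a definable set, hence a sort; by the cited remark, $D_T$ is then a universal Skolem sort; and \autoref{thm:UniversalStarUninversalSkolem} yields that $L * D_T$ is a universal star sort. Since the universal star sort is unique up to homogeneous definable bijection (as observed after \autoref{dfn:UniversalStarSort}), it will suffice to produce such a bijection $L * D_0 \cong L * D_T$.

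Using \autoref{rmk:GeneralisedConeIterated} together with the canonical identification $D_0 \times 2^\bN = 2^\bN \times D_0$, one rewrites
\begin{gather*}
  L * D_T = L * (D_0 \times 2^\bN) = L * (2^\bN \times D_0) = (L * 2^\bN) * D_0.
\end{gather*}
The construction $S \mapsto S * D_0$ is plainly functorial for homogeneous definable bijections, so the problem reduces to producing a homogeneous definable bijection $L \cong L * 2^\bN$ as star sorts. The sort $L * 2^\bN$ embeds as a closed star-subspace of $*(2^\bN \times 2^\bN)$, which is itself identified with the Cantor fan $*2^\bN$ via any homeomorphism $2^\bN \times 2^\bN \to 2^\bN$; its set of endpoints is the product of the (dense) endpoint set of $L$ with $2^\bN$, hence dense in $L * 2^\bN$. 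Thus $L * 2^\bN$ is a Lelek fan, and by Charatonik's uniqueness theorem \cite{Charatonik:UniqueLelekFan} it is homeomorphic to $L$. Since formulas on compact metric spaces are precisely the continuous functions, this homeomorphism is automatically definable.

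The main obstacle is to upgrade the topological homeomorphism $L \cong L * 2^\bN$ to a \emph{homogeneous} (star-equivariant) one: Charatonik's uniqueness yields only a topological isomorphism, whereas the action of $[0,1]$ must also be preserved. I expect this to follow from an equivariant refinement of Charatonik's argument, exploiting that every Lelek fan has a unique cone point (the root) and that the $[0,1]$-action is determined by the length functions along the rays emanating from the root; a construction matching these data on both sides should yield the required homogeneous bijection.
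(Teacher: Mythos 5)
Your proposal follows essentially the same route as the paper's proof: cite \cite[Proposition~4.17]{BenYaacov:ReconstructionGroupoid} for the fact that $2^\bN \times D_0$ is a universal Skolem sort, apply \autoref{thm:UniversalStarUninversalSkolem} to conclude that $L * (2^\bN \times D_0) = (L * 2^\bN) * D_0$ is a universal star sort, and then identify $L * 2^\bN$ with $L$. The concern you flag at the end is exactly the one non-formal step, and the paper's own proof is equally terse there: it observes that $L * 2^\bN$ is a fan with dense endpoints, invokes Charatonik's uniqueness theorem to obtain a homeomorphism $L * 2^\bN \cong L$, and writes $(L * 2^\bN) * D_0 \simeq L * D_0$ without remarking that, for universality to transfer, this identification must be a \emph{homogeneous} definable bijection. (A homeomorphism of compact metric sorts is automatically a definable bijection, but the notion of universal star sort is phrased in terms of star correspondences, which are sub-homogeneity conditions, and so universality is invariant only under homogeneous definable bijections, as observed after \autoref{prp:UniversalStarSortMaps}.)

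So you have not merely reproduced the paper's argument but also correctly isolated its one under-justified step. Your suggestion that an equivariant refinement of Charatonik's argument should supply the homogeneous bijection is the right idea, but be aware that it must genuinely be a refinement of the back-and-forth, not a post-hoc ``straightening'' of some given homeomorphism $h \colon L \to L * 2^\bN$: if $h$ is non-linear on a leg $\ell$ of $L$ and endpoints of other legs accumulate at an interior point of $\ell$ (which must happen, by density of endpoints), then the leg-linear map agreeing with $h$ on endpoint directions fails to be continuous at such accumulation points. Instead, one should build the homogeneous homeomorphism directly by a back-and-forth in which the partial isomorphisms are already leg-linear, matching clopen blocks of the two fans over compatible height intervals; this is in the spirit of Charatonik's proof but is not a formal consequence of his statement.
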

\begin{proof}
  In an $\aleph_0$-categorical theory, every type-definable set is definable.
  By \cite[Proposition~4.17]{BenYaacov:ReconstructionGroupoid}, $2^\bN \times D_0$ is a universal Skolem sort.
  Now, $L * 2^\bN \subseteq (*2^\bN) * 2^\bN = *(2^\bN \times 2^\bN)$ is easily checked to be a fan, whose set of endpoints is dense, so it is homeomorphic to $L$.
  Therefore
  \begin{gather*}
    L * (2^\bN \times D_0) = (L * 2^\bN) * D_0 \simeq L * D_0.
  \end{gather*}
  By \autoref{thm:UniversalStarUninversalSkolem}, this is a universal star sort.
\end{proof}

Define $L^{(2)} \subseteq L^2$ as the set of pairs $(x,y)$ such that either both $x = y = 0$, or both are non-zero.
This is a Polish, albeit non-compact, star space, with root $(0,0)$.
When $\bG$ is a topological groupoid, we may equip $L^{(2)} * \bG$ with a groupoid composition law
\begin{gather*}
  [x,y,g] \cdot [y,z,h] = [x,z,gh].
\end{gather*}
If $\bB$ is the basis of $\bG$, then $L * \bB$ is the basis of $L^{(2)} * \bG$.

\begin{cor}
  \label{cor:StarGroupoidSpecialCases}
  Let $T$ be a continuous theory admitting a universal Skolem sort $D$, and let $\bG(T) = \bG_D(T)$, as in \autoref{exm:ReconstructionUniversalSkloem}.
  Then $\bG^*(T) \simeq L^{(2)} * \bG(T)$.
  If $T$ is $\aleph_0$-categorical, and $G(T)$ is the automorphism group of its unique separable model, then $\bG^*(T) \simeq L^{(2)} * G(T)$.
\end{cor}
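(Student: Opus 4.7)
The plan is to exploit the uniqueness of the universal star sort (up to homogeneous definable bijection) to choose a convenient representative, then compute $\bG^*(T)$ directly. For the first assertion, \autoref{thm:UniversalStarUninversalSkolem} permits us to take $D^* = L * D$, whence $\bG^*(T) = \bG_{L*D}(T)$; for the $\aleph_0$-categorical case, \autoref{cor:UniversalStarSeparablyCategorical} analogously gives $D^* = L * D_0$, and $\bG_{D_0}(T) = G(T)$ by \autoref{exm:ReconstructionAleph0Categorical}. In each case it suffices to prove the general identification: if $D$ is a coding sort of $T$ with no exceptional set (as holds for a universal Skolem sort and for $D_0$), then $\bG_{L*D}(T) \simeq L^{(2)} * \bG_D(T)$ as topological groupoids.

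The bijection is straightforward on points. A non-root element of $L * D \subseteq *(2^\bN \times D)$ can be written uniquely as $u = \alpha * x$ with $\alpha \in L \setminus \{0\}$ and $x \in D$, both recoverable from $u$ by taking norm and dividing. Since $L$ sits inside the standard rigid sort $*2^\bN$, every element $\alpha \in L$ lies in $\dcl(\emptyset)$, and there are no non-trivial definable interactions between the $L$-coordinate and the $D$-coordinate; consequently $\dcl(u) \cap D = \dcl(x)$. For $u_i = \alpha_i * x_i$ with $\alpha_i \neq 0$, the coding condition $\dcl(u_1) = \dcl(u_2)$ therefore reduces exactly to $\dcl(x_1) = \dcl(x_2)$, with no restriction on $(\alpha_1,\alpha_2)$. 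So the assignment
\begin{gather*}
  \tp(\alpha_1 * x_1, \alpha_2 * x_2) \longmapsto [\alpha_1, \alpha_2, \tp(x_1,x_2)]
\end{gather*}
is a bijection between $\bG_{L*D}(T) \setminus \{\text{root}\}$ and $(L \setminus \{0\})^2 \times \bG_D(T)$; sending the unique root type to the root of $L^{(2)} * \bG_D(T)$ extends it to a groupoid isomorphism, since composition and inversion transport as $[\alpha_1,\alpha_2,g] \cdot [\alpha_2,\alpha_3,h] = [\alpha_1,\alpha_3,gh]$ and $[\alpha_1,\alpha_2,g]^{-1} = [\alpha_2,\alpha_1,g^{-1}]$.

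The main technical point -- and the expected obstacle -- is verifying that this bijection is a homeomorphism, particularly near the root. Away from the root, the rigidity of $L$ implies that the types in $\tS_{(L*D)^2}(T)$ factor canonically through $(L \setminus \{0\})^2 \times \tS_{D^2}(T)$, so the subspace topology on $\bG_{L*D}(T) \setminus \{\text{root}\}$ matches the product topology on $(L \setminus \{0\})^2 \times \bG_D(T)$. At the root, basic open neighbourhoods in $\bG_{L*D}(T)$ are cut out by formulas $\varphi(u_1,u_2) > 0$ with $\varphi(0,0) > 0$; I would invoke \autoref{prp:UCC} to translate the continuous dependence of such $\varphi$ on the $D$-coordinates into UCC functions on $\bG_D(T)$, combined with ordinary continuity in the rigid $L$-coordinates, to conclude that the resulting basic open sets correspond precisely to quotient-open neighbourhoods of the collapsed fibre $\{(0,0)\} \times \bG_D(T)$ in $L^{(2)} * \bG_D(T)$.
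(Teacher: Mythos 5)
Your proof follows the same approach as the paper's, which is simply to substitute the identifications $D^* = L * D$ and $D^* = L * D_0$ from \autoref{thm:UniversalStarUninversalSkolem} and \autoref{cor:UniversalStarSeparablyCategorical} into the definition of $\bG_{D^*}(T)$ and read off the result. You supply considerably more detail than the paper (which dispatches this in a single sentence), and your pointwise bijection via rigidity of $L$, the composition/inversion formulas, and the sketch of the topological matching near the root are all consistent with what the paper tacitly intends.
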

\begin{proof}
  Just put the identities $D^* = L * D$ and $D^* = L * D_0$ through the groupoid construction.
\end{proof}

\bibliographystyle{begnac}
\bibliography{begnac}

\end{document}